\newtheorem{theorem}{Theorem}[section]
\newtheorem{lemma}[theorem]{Lemma}
\theoremstyle{definition}
\newtheorem{remark}[theorem]{Remark}
\newcommand{\what}{\widehat}
\newcommand{\wtilde}{\widetilde}
\newcommand{\R}{\mathbb R}%
\newcommand{\C}{\mathbb C}%
\newcommand{\N}{\mathbb N}%
\newcommand{\z}{\mathfrak z}%
\newcommand{\mv}{\mathfrak v}%
\newcommand{\J}{\mathscr J}%
\numberwithin{equation}{section}
\renewcommand\subsubsection{\@secnumfont}{\bfseries}%
\renewcommand\subsubsection{\@startsection{subsubsection}{3}
  \z@{.5\linespacing\@plus.7\linespacing}{-.5em}%
  {\normalfont\bfseries}}
\begin{document}

\title[Mapping properties]{Mapping properties of the Schr\"odinger maximal function on Damek--Ricci spaces }

\author[U. Dewan]{Utsav Dewan}
\address{Stat-Math Unit, Indian Statistical Institute, 203 B. T. Rd., Kolkata 700108, India}
\email{utsav\_r@isical.ac.in}

\author[S. K. Ray]{Swagato K. Ray}
\address{Stat-Math Unit, Indian Statistical Institute, 203 B. T. Rd., Kolkata 700108, India}
\email{swagato@isical.ac.in}

\subjclass[2020]{Primary 35J10, 43A85; Secondary 22E30, 43A90}

\keywords{Mapping properties, Maximal function, Schr\"odinger equation, Damek--Ricci spaces, Radial functions.}

\begin{abstract}
For $f \in \mathscr{S}^2(\mathcal S)_{o}$, the collection of radial $L^2$-Schwartz class functions
on Damek--Ricci spaces $\mathcal S$, we consider the Schr\"odinger maximal function,
\begin{equation*}
S^* f(x):= \displaystyle\sup_{0<t<4/Q^2} \left|S_tf(x)\right|\:,\:\:\:\:\:\:x\in\mathcal S\:,
\end{equation*}
corresponding to the Laplace--Beltrami operator $\Delta$ with initial data $f$. We first obtain the complete description of the pairs $(q, \alpha) \in [1, \infty] \times [0,\infty)$ for which the estimate
\begin{equation*}
{\|S^*f\|}_{L^q\left(B_R\right)} \le C_R\: {\|f\|}_{H^{\alpha}(\mathcal S)}\:,
\end{equation*}
holds on geodesic balls $B_R$, for all $f \in \mathscr{S}^2(\mathcal S)_{o}$.
Our results are sharp and agree with the Euclidean case. We also prove that for all $f \in \mathscr{S}^2(\mathcal S)_{o}$
the following global estimate
\begin{equation*}
{\|S^*f\|}_{L^{2,\infty}(\mathcal S)} \le C\: {\|f\|}_{H^{\alpha}(\mathcal S)},\:\:\:\:\alpha>1/2,
\end{equation*}
holds true.
\end{abstract}

\maketitle
\tableofcontents

\section{Introduction}
One of the most celebrated problems in Euclidean harmonic analysis is the Carleson's problem: determining the optimal regularity of the initial data $f$ of the
Schr\"odinger equation given by
\begin{equation}\label{sch Rn}
\begin{cases}
	 i\frac{\partial u}{\partial t} =\Delta u\:,\:  (x,t) \in \mathbb{R}^n \times \mathbb{R} \\
	u(\cdot,0)=f\:, \text{ on } \mathbb{R}^n \:,
	\end{cases}
\end{equation}
in terms of the index $\alpha$ such that for all $f$ belonging to the inhomogeneous Sobolev space $H^\alpha(\mathbb{R}^n)$
\begin{equation}\label{euclid}
\lim_{t \to 0+} u(x,t)=f(x)\:,
\end{equation}
holds for almost every $x\in\R^n$. This problem was first studied by Carleson \cite{C} and Dahlberg-Kenig \cite{DK} for $n=1$, followed by several other mathematicians for $n\geq 1$ (see \cite{Cowling, Sjolin, Vega, Bourgain, DGL, DZ} and references therein). These works show that the optimal regularity depends on the dimension of the space. It is now known that $\alpha\ge n/(2n+2)$ is necessary for (\ref{euclid}) to hold and sufficiency of this condition has been established, barring the endpoint. Because of these results, it appears that we may not be very far from the complete solution of the problem for Euclidean spaces.

But in the case of radial initial data $f$, the complete solution of Carleson's problem for $\R^n$, $n\ge 2$, is known for some time now \cite{Prestini, Sjolin2}. In this case, it turns out that the optimal regularity is independent of dimension; in fact, $\alpha\ge 1/4$ is known to be necessary and sufficient for the validity of (\ref{euclid}). However, not much about this problem is known in the context of negatively curved Riemannian manifolds. As far as we know, \cite{Dewan, WZ} are the only papers which dealt with this problem for such Riemannian manifolds. Keeping the Euclidean scenario in mind, in this article, we shall try to obtain a complete solution to Carleson's problem for radial functions on a specific class of Riemannian manifolds. The most well-known examples of such Riemannian manifolds are rank one Riemannian symmetric spaces of noncompact type, a prototype of which are the real hyperbolic spaces $\mathbb H^n$.

We start by briefly describing the results proved in \cite{Prestini, Sjolin2}, where it was assumed that the initial data $f$ is a radial function on $\R^n$, $n\ge 2$. In \cite{Prestini}, it was proved that (\ref{euclid}) holds for all radial functions $f\in H^\alpha(\mathbb{R}^n)$ if and only of $\alpha\ge 1/4$. The sufficiency of the condition $\alpha\ge 1/4$ was obtained by proving the following maximal estimate
\begin{equation} \label{euclid1}
{\|\tilde{S}^*f\|}_{L^1\left(B_R\right)} \le C_R\: {\|f\|}_{H^{1/4}(\R^n)}\:,
\end{equation}
where $C_R$ is independent of $f$, $B_R$ is a ball of radius $R$, centered at the origin and
\begin{eqnarray}\label{Rn prop}
\tilde{S}^*f(x)&=&\text{sup}_{t\in (0,1)}|\tilde{S}_tf(x)|,\:\:\:\:x\in\R^n,\nonumber\\
\tilde{S}_tf(x)&=&\int_{0}^\infty \J_{\frac{n-2}{2}}(\lambda \|x\|) \: e^{it \lambda^2}\: \mathscr{F}f(\lambda)\:\lambda^{n-1}d\lambda\:.
\end{eqnarray}
Here $\mathscr{F}f$ is the usual Euclidean Fourier transform of the radial function $f$ written in radial coordinates giving rise to the formula
\begin{equation*}
\mathscr{F}f(\lambda):= \int_0^\infty f(r) \J_{\frac{n-2}{2}}(\lambda r)\: r^{n-1}\: dr\:,\:\:\:\:\:\lambda\in [0,\infty),
\end{equation*}
where for all $\mu \ge 0$,
\begin{equation*}
\J_\mu(z)= 2^\mu \: \sqrt{\pi} \: \Gamma\left(\mu + \frac{1}{2} \right) \frac{J_\mu(z)}{z^\mu},
\end{equation*}
and $J_\mu$ are the Bessel functions \cite[p.154]{SW}. In \cite[Theorem 3]{Sjolin2}, the estimate (\ref{euclid1}) was strengthened further, by obtaining a complete description of the pairs $(q, \alpha) \in [1, \infty] \times [0,\infty)$ such that the following maximal estimates
\begin{equation}\label{euclid2}
\|\tilde{S}^*f\|_{L^q\left(B_R\right)} \le C_R\: {\|f\|}_{H^{\alpha}(\R^n)}\:,
\end{equation}
hold.

In addition to the problem of pointwise convergence, one is also interested in studying the well-posedness by means of global maximal estimates (see \cite{KPV}). In this regard, Sj\"olin \cite{Sjolin3} also proved that the following global maximal estimate
\begin{equation}\label{Rn_global_estimate}
\|\tilde{S}^*f\|_{L^q\left(\R^n\right)}\leq C\: {\|f\|}_{H^{\alpha}(\R^n)},\:\:\:\:\alpha >1/2,
\end{equation}
holds for $q=2$ and fails for $q\in [1,2)$.

Our aim, in this article, is to prove analogs of the results above for a class of noncompact Riemannian manifolds called Damek--Ricci spaces. The distinguished prototypes of these spaces are the Riemannian symmetric spaces of noncompact type with rank one (except the real hyperbolic spaces $\mathbb H^n$). It is also known that the latter accounts for but a very small subclass of Damek--Ricci spaces \cite{ADY}. These spaces, denoted by $\mathcal S$, are non-unimodular, solvable extensions of Heisenberg type groups $N$, obtained by letting $A=\R^+$ act on $N$ by nonisotropic dilation (see Section 2 for details).

Recently, in \cite{Dewan}, the first author studied the Carleson's problem for solutions of the Schr\"odinger equation corresponding to the Laplace--Beltrami operator on $\mathcal S$ and proved that an analogue of (\ref{euclid}) holds
if $f\in H^\alpha(\mathcal S)$ is radial and $\alpha \ge 1/4$. It was obtained by proving a maximal estimate analogous to (\ref{euclid1}) for the Schr\"odinger maximal function. To explain this result we need to introduce some notations which are described in detail in Section 2.

Let $\Delta$ be the Laplace--Beltrami operator on $\mathcal S$ corresponding to the left-invariant Riemannian metric. Its $L^2$-spectrum is the half line $(-\infty,  -Q^2/4]$, where $Q$ is the homogeneous dimension of $N$. For notational convenience we shall write $Q=2\rho$. The Schr\"odinger equation on $\mathcal S$ is given by
\begin{equation} \label{schrodinger}
\begin{cases}
	 i\frac{\partial u}{\partial t} =\Delta u\:,\:\:\:\: (x,t) \in \mathcal S \times \R\:, \\
	u(\cdot,0)=f\:,\: \text{ on } \mathcal S \:.
	\end{cases}
\end{equation}
For $f \in \mathscr{S}^2(\mathcal S)_{o}$, the collection of radial $L^2$-Schwartz class functions (see (\ref{schwartz_defn})),
\begin{equation} \label{schrodinger_soln}
S_t f(x):= \int_{0}^\infty \varphi_\lambda(x)\:e^{it\left(\lambda^2 + \rho^2\right)}\:\widehat{f}(\lambda)\: {|{\bf c}(\lambda)|}^{-2}\: d\lambda\:,\:\:\:\:x\in\mathcal S
\end{equation}
is the solution to (\ref{schrodinger}), where $\varphi_\lambda$ are the spherical functions, $\widehat{f}$ is the spherical Fourier transform of $f$ and ${\bf c}(\cdot)$ denotes Harish-Chandra's ${\bf c}$-function. For $\alpha \ge 0$, the fractional $L^2$-Sobolev spaces on $\mathcal S$, for the special case of radial functions, can be defined using the spherical Fourier transform and are denoted by $H^{\alpha}(\mathcal S)$ (see (\ref{sobolev_space_defn})). Analogous to $\R^n$, the maximal function associated to the solution (\ref{schrodinger_soln}) is defined by
\begin{equation} \label{maximal_fn_defn}
S^* f(x):= \displaystyle\sup_{0<t<1/\rho^2} \left|S_tf(x)\right|\:,\:\:\:\:\:\:\: x\in \mathcal S.
\end{equation}
The main result in \cite{Dewan} is that the following maximal estimate (analogous to (\ref{euclid1})) holds true
\begin{equation} \label{bdd}
{\|S^*f\|}_{L^1\left(B_R\right)} \le C\: {\|f\|}_{H^{1/4}(\mathcal S)}\:,
\end{equation}
for all $f \in \mathscr{S}^2(\mathcal S)_{o}$. Here $B_R$ denotes the geodesic ball of radius $R$ centered at the identity and the positive constant $C$ depends only on $R$ and dimension of $\mathcal S$. In \cite[Theorem 1.4]{Dewan}, the author also addressed the question of optimal regularity of $f$ in the (degenerate) case of $\mathbb H^3$,  by showing that (\ref{bdd}) fails whenever $\alpha <1/4$. 
The methods employed in \cite{Dewan} however, do not seem to have a straightforward generalization to $\mathcal S$, and thus obtaining the optimal regularity on this class of spaces becomes an interesting question on its own.

This leads us to the natural question of looking for an analogue of (\ref{euclid2}) for $\mathcal S$, precisely, to obtain the complete description of the pairs $(q, \alpha) \in [1, \infty] \times [0,\infty)$, for which one has the following maximal estimate
\begin{equation} \label{maximal_estimate}
{\|S^*f\|}_{L^q\left(B_R\right)} \le C_R\: {\|f\|}_{H^{\alpha}(\mathcal S)}\:,
\end{equation}
for any $R>0$, and all $f \in \mathscr{S}^2(\mathcal S)_{o}$.
In this regard our first result is the following theorem.
\begin{theorem} \label{theorem}
Let $\alpha\ge 0$, $q\in [1,\infty]$, and $\text{dim}(\mathcal S)=n$. Then we have the following
\begin{enumerate}
\item[(i)] If $\alpha < 1/4$, then (\ref{maximal_estimate}) does not hold for any $q\in [1,\infty]$.
\item[(ii)] If $1/4 \le \alpha < n/2$, then (\ref{maximal_estimate}) holds if and only if  $q \le 2n/(n-2\alpha)$.
\item[(iii)] If $\alpha = n/2$, then (\ref{maximal_estimate}) holds if and only if $q< \infty$.
\item[(iv)] If $\alpha > n/2$, then (\ref{maximal_estimate}) holds for all $q\in [1,\infty]$.
\end{enumerate}
\end{theorem}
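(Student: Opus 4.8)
Both directions are governed by the behaviour of the spherical functions $\varphi_\lambda$ and of the Plancherel density $|\mathbf{c}(\lambda)|^{-2}$ in two regimes. Near the identity $o$, $\varphi_\lambda(x)$ resembles the Euclidean Bessel kernel $\J_{\frac{n-2}{2}}(\lambda|x|)$ of (\ref{Rn prop}) and $|\mathbf{c}(\lambda)|^{-2}=c\,\lambda^{n-1}(1+o(1))$ as $\lambda\to\infty$, so on a small ball $S_t$ agrees, up to lower-order terms and the unimodular factor $e^{it\rho^2}$, with the Euclidean radial Schr\"odinger evolution $\tilde S_t$ of (\ref{Rn prop}); away from $o$, the Harish-Chandra expansion $\varphi_\lambda(x)=\mathbf{c}(\lambda)\Phi_\lambda(x)+\mathbf{c}(-\lambda)\Phi_{-\lambda}(x)$ with $\Phi_{\pm\lambda}(x)\sim e^{(\pm i\lambda-\rho)|x|}$ and $\mathbf{c}(\pm\lambda)^{-1}\sim c_\pm\lambda^{(n-1)/2}$ turns $S_tf$ into a superposition of one-dimensional half-line Schr\"odinger evolutions, modulated by the harmless factor $e^{-\rho|x|}\le1$.

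\textbf{Sufficiency.} Part (iv) is immediate: since $|\varphi_\lambda(x)|\le1$ for all $x\in\mathcal S$ and $\lambda\ge0$, Cauchy--Schwarz gives
\[
|S^*f(x)|\le\int_0^\infty|\widehat f(\lambda)|\,|\mathbf{c}(\lambda)|^{-2}\,d\lambda\le\|f\|_{H^\alpha(\mathcal S)}\Big(\int_0^\infty(1+\lambda^2)^{-\alpha}|\mathbf{c}(\lambda)|^{-2}\,d\lambda\Big)^{1/2},
\]
and the last integral converges precisely for $\alpha>n/2$ since $|\mathbf{c}(\lambda)|^{-2}\asymp\lambda^2$ near $0$ and $\asymp\lambda^{n-1}$ near $\infty$. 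For parts (ii)--(iii) I would split $f=f_0+f_1$ with $\widehat{f_0}$ supported in $\{\lambda\le1\}$ and $\widehat{f_1}$ in $\{\lambda\ge1/2\}$; as $\int_0^1|\mathbf{c}(\lambda)|^{-2}\,d\lambda<\infty$, the same computation gives $\|S^*f_0\|_{L^\infty(\mathcal S)}\lesssim\|f\|_{L^2(\mathcal S)}\le\|f\|_{H^\alpha(\mathcal S)}$. For $f_1$, write $B_R=B_\delta\cup(B_R\setminus B_\delta)$. On $B_R\setminus B_\delta$, inserting the Harish-Chandra expansion and the large-$\lambda$ asymptotics of $\mathbf{c}(\pm\lambda)^{-1}$ into (\ref{schrodinger_soln}) reduces $S^*f_1$, up to errors carrying an extra $\lambda^{-1}$ or $e^{-|x|}$, to the one-dimensional maximal operators $\sup_t\big|\int e^{\pm i\lambda r}e^{it\lambda^2}G(\lambda)\,d\lambda\big|$ with $\|G\|_{L^2((1+\lambda^2)^\alpha d\lambda)}\lesssim\|f_1\|_{H^\alpha(\mathcal S)}$; since the Riemannian density is bounded above and below on $(\delta,R)$, the classical maximal estimate on a bounded interval (which holds in particular throughout the range required in (ii), the branch $e^{+i\lambda r}$ having no critical point being the easier one) closes this piece. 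On $B_\delta$, the local comparisons $\varphi_\lambda(x)=\J_{\frac{n-2}{2}}(\lambda|x|)+(\text{lower order})$ and $|\mathbf{c}(\lambda)|^{-2}=c\,\lambda^{n-1}+(\text{lower order})$ identify $S_tf_1$, up to acceptable errors, with $\tilde S_t$ applied to a radial function of comparable $H^\alpha$ norm; invoking the sharp description of the pairs $(q,\alpha)$ for which (\ref{euclid2}) holds (\cite[Theorem 3]{Sjolin2}) produces exactly the threshold $q\le 2n/(n-2\alpha)$ and the restriction $q<\infty$ at $\alpha=n/2$, matching the Euclidean case. Part (iii) for finite $q$ then follows from (ii) by monotonicity of the $H^\alpha$ norms. (Alternatively, most of the range in (ii) follows by real interpolation between (iv) and the single endpoint estimate $\|S^*f\|_{L^{q_0}(B_R)}\lesssim\|f\|_{H^{1/4}(\mathcal S)}$ with $q_0=4n/(2n-1)$ --- which refines the $L^1$ bound (\ref{bdd}) of \cite{Dewan} --- together with H\"older's inequality on $B_R$ for $q\le q_0$.)

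\textbf{Necessity.} In each case I would exhibit $f_N\in\mathscr S^2(\mathcal S)_o$ with frequency support in a high window, so that the low-frequency form of $|\mathbf{c}(\lambda)|^{-2}$ is irrelevant and the Bessel comparison near $o$ applies. For (i): take $\widehat{f_N}$ a nonnegative bump of width $\asymp N^{1/2}$ at $\lambda=N$; at the focusing time $t\asymp|x|/N$ the branch $e^{-i\lambda|x|}$ of $\varphi_\lambda$ is coherent across the window, giving $S^*f_N\gtrsim N^{n/2}$ on a subset of $B_R$ of measure $\asymp1$, whereas $\|f_N\|_{H^\alpha(\mathcal S)}\asymp N^{\alpha+n/2-1/4}$; hence (\ref{maximal_estimate}) forces $\alpha\ge1/4$, for every $q$. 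For the sharpness of $q$ in (ii): take $\widehat{f_N}=\mathbf 1_{[N,2N]}$ (suitably smoothed); on $\{|x|\le N^{-1}\}$ already $S^*f_N\gtrsim N$, while for $N^{-1}\le|x|\le1$ stationary phase at the focusing time $t\asymp|x|/N$ yields $S^*f_N(x)\gtrsim N^{n/2}|x|^{-n/2}$, so $\|S^*f_N\|_{L^q(B_R)}\gtrsim N^{n-n/q}$ against $\|f_N\|_{H^\alpha(\mathcal S)}\asymp N^{\alpha+n/2}$, forcing $\alpha\ge n/2-n/q$. For the failure at $(q,\alpha)=(\infty,n/2)$ in (iii): take $\widehat{f_N}(\lambda)=c_N(1+\lambda^2)^{-n/2}\mathbf 1_{[2,N]}(\lambda)$ with $c_N\asymp(\log N)^{-1/2}$, so $\|f_N\|_{H^{n/2}(\mathcal S)}\asymp1$ while $\|S^*f_N\|_{L^\infty(B_R)}\ge|S_0f_N(o)|=|f_N(o)|=\int_0^\infty\widehat{f_N}(\lambda)|\mathbf{c}(\lambda)|^{-2}\,d\lambda\asymp(\log N)^{1/2}\to\infty$.

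\textbf{Main obstacle.} The crux is making the near-$o$ reduction to the Euclidean radial problem rigorous at the critical regularity: one must show that the discrepancies $\varphi_\lambda(x)-\J_{\frac{n-2}{2}}(\lambda|x|)$ and $|\mathbf{c}(\lambda)|^{-2}-c\,\lambda^{n-1}$, once fed into the time-maximal operator on $B_\delta$, are genuinely of lower order --- controlled by a lower Sobolev norm or by a strictly wider range of $q$ --- so that \emph{no} loss is incurred at $\alpha=1/4$ or at $q=2n/(n-2\alpha)$. The transition zone $\lambda|x|\sim1$, where $\varphi_\lambda$ passes from the Bessel regime to the oscillatory regime, together with the precise $\lambda$- and $|x|$-dependence of these error terms (read off from the known asymptotics of spherical functions, e.g.\ \cite{ADY}), is where the real work lies; the one-dimensional estimate used on $B_R\setminus B_\delta$, though classical, must likewise be set up in the half-line form with the correct $(q,\alpha)$ range.
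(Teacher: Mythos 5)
Your overall blueprint coincides with the paper's: a low/high-frequency decomposition, a split of $B_R$ into a small ball and an annulus, the Bessel-type expansion of $\varphi_\lambda$ near $o$, and the Harish-Chandra-type expansion away from $o$, with a transfer to the Euclidean radial result \cite[Theorem 3]{Sjolin2} on the ball and a one-dimensional oscillatory analysis on the annulus. Two of your details are genuinely tidier than the paper's: the direct Cauchy--Schwarz proof of part~(iv) from $|\varphi_\lambda|\le1$ and the convergence of $\int_0^\infty(\lambda^2+\rho^2)^{-\alpha}|\mathbf c(\lambda)|^{-2}\,d\lambda$ exactly for $\alpha>n/2$, and the explicit counterexample $\widehat{f_N}\asymp(\log N)^{-1/2}(1+\lambda^2)^{-n/2}\mathbf 1_{[2,N]}$ for the failure at $(q,\alpha)=(\infty,n/2)$ (the paper instead argues by contradiction with the known failure of the local borderline embedding on $\R^n$, transferred through the correspondence lemma). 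Your necessity constructions for (i) and (ii) are, modulo normalisation, the same as the paper's (indeed (\ref{sharpness_eq1}) is your bump with the extra $\lambda^{-(n-1)/2}$ weight), and the monotonicity-in-$\alpha$ deduction of (iii) from (ii) is sound.

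There are, however, two places where you acknowledge an ``acceptable error'' or a ``classical'' estimate without actually supplying one, and these are precisely where the paper does real work and a proof cannot simply wave them through.

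First, the near-$o$ comparison is not merely ``$\varphi_\lambda\approx\J_{\frac{n-2}{2}}$ and $|\mathbf c(\lambda)|^{-2}\approx c\lambda^{n-1}$ up to lower order.'' You need to manufacture, from a given $f\in\mathscr S^2(\mathcal S)_o$ with high-frequency support, an explicit $g\in\mathscr S(\R^n)_o$ for which \emph{both} $|T_1f(s)|\asymp|T_0g(s)|$ on the small ball \emph{and} $\|f\|_{H^\alpha(\mathcal S)}\asymp\|g\|_{H^\alpha(\R^n)}$, with constants uniform in $f$, before Sj\"olin's $\R^n$ theorem can be invoked. The paper accomplishes this with the Schwartz correspondence (Lemma \ref{schwartz_correspondence}), constructed by conjugating the two Abel transforms $\mathscr A_{\mathcal S,\R}$, $\mathscr A_{\R^n,\R}$ with the multiplier $\mathfrak m(\kappa)(\lambda)=|\mathbf c(\lambda)|^{-2}\lambda^{-(n-1)}\kappa(\lambda)$ (well defined on frequency supports away from $0$ by (\ref{c-fn_derivative_estimates})); the exact identity $\lambda^{n-1}\mathscr F g(\lambda)=|\mathbf c(\lambda)|^{-2}\widehat f(\lambda)$ is what makes (\ref{ball_pf_eq5})--(\ref{ball_pf_eq6}) two-sided comparabilities rather than one-sided losses. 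In parallel, the genuine error $T_2$ coming from $E_1(\lambda,s)$ must be shown to be strictly better behaved than $T_1$ at the critical $(q,\alpha)$, which the paper does via the size estimates (\ref{ball_pf_eq7})--(\ref{ball_pf_eq9}), and this $E_1$-control is also needed \emph{inside} the necessity arguments (to show $|T_2g_N|$ is dominated by the main term), not only for sufficiency.

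Second, on the annulus there is no off-the-shelf ``classical maximal estimate on a bounded interval'' that gives the required $L^4(A(R_0,R))\to H^{1/4}(\mathcal S)$ bound for the leading oscillatory pieces. The Harish-Chandra-type expansion produces not $e^{\pm i\lambda s}$ with a flat amplitude but $A(s)^{-1/2}\mathbf c(\pm\lambda)e^{\pm i\lambda s}$ with a tail series; after renormalising by $A(s)^{1/4}$ the correct operator is $P$ with kernel carrying a weight $(\lambda^2+\rho^2)^{-1/8}$, and the $TT^*$ kernel is
\[
I(s,s')=\bigintssss_{1/R_0}^\infty\frac{e^{i\{\lambda(s'-s)+(t(s')-t(s))(\lambda^2+\rho^2)\}}}{(\lambda^2+\rho^2)^{1/4}}\,d\lambda,
\]
for which the oscillatory integral bound (Lemma \ref{oscillatory_integral_estimate}), giving $|I(s,s')|\lesssim|s-s'|^{-1/2}+1$ uniformly in the linearising time $t(\cdot)$, must be proved. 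Even then, closing the $L^4$ estimate requires identifying the resulting bilinear form with a Riesz potential $I_{1/2}$ and applying the weighted Pitt inequality (\ref{Pitt's_ineq}) with the exact exponent $\alpha_1=0$; neither step is standard interval-Schr\"odinger boilerplate. The remaining error piece $T_5$ is controlled only because the coefficients $\Gamma_\mu(\lambda)$ of (\ref{anker_series_expansion}) carry the extra decay $(1+|\lambda|)^{-1}$ of (\ref{coefficient_estimate}), which is strictly better than the classical Harish-Chandra series; this refinement is indispensable at the endpoint $\alpha=1/4$ and deserves to be flagged as an input, not subsumed under ``harmless factor $e^{-\rho|x|}$.''

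Your suggested interpolation shortcut is internally consistent (the arithmetic on $(\theta,\alpha_\theta,q_\theta)$ does land on $q=2n/(n-2\alpha)$), but it does not save work: the endpoint $\|S^*f\|_{L^{4n/(2n-1)}(B_R)}\lesssim\|f\|_{H^{1/4}(\mathcal S)}$ is not in \cite{Dewan} and is, up to the ball/annulus split, precisely what the two hard lemmata above produce. So the interpolation reorganises, rather than replaces, the core arguments.
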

A cursory look at \cite[Theorem 3]{Sjolin2} will show that the statement of Theorem \ref{theorem} is identical to the corresponding result on $\R^n$. It would be somewhat misleading to think that the reason for such a similarity is simply because $\mathcal S$ is locally Euclidean and the relevant measure behaves locally like the $n$-dimensional Lebesgue measure. The real reason for this similarity is however much deeper and is intricately related to the behaviour of the spherical function $\varphi_{\lambda}$ along with that of the density of the Plancherel measure.

For Euclidean spaces, Sj\"olin's proof of the above result hinges heavily on the small and large argument asymptotics of the Bessel functions and the oscillation afforded by them along with that of the multiplier corresponding to the Schr\"odinger operator. In our case, the spherical functions $\varphi_{\lambda}$ play a role similar to that of Bessel functions. Except for the special case of $\mathbb H^3$, in the general setting of $\mathcal S$ no explicit expression of the spherical function $\varphi_{\lambda}$ is available. But for $\mathcal S$, these functions have two important series expansions both of which play crucial roles in our proof.

The first such series expansion was obtained in \cite{ST} and was later generalised for $\mathcal S$ in \cite{A} (see Lemma \ref{bessel_series_expansion}). This series expansion involves Bessel functions of different orders and is primarily used in handling the inverse spherical Fourier transform via inversion of the Abel transform (see \cite{PM1, PM2}, for instance). In our case, this expansion allows us to use the Euclidean results from \cite{Sjolin2} via certain interesting arguments involving Abel transforms of radial functions defined on $\mathcal S$ as well as on $\R^n$.

The second series expansion of $\varphi_{\lambda}$ was developed recently in \cite {AP, APV} and can be viewed as a refinement of the classical Harish-Chandra series (see (\ref{anker_series_expansion})). It is this series which brings out the decisive role played by oscillatory integral estimates in the proof of Theorem \ref{theorem}. The importance of this latter series cannot be overstated because it seems unlikely that Theorem \ref{theorem} can be proved using the classical Harish-Chandra series.
To achieve our goal, we also had to carefully negotiate the dichotomy exhibited by the density of the Plancherel measure for small and large values of $\lambda$ (see (\ref{plancherel_measure})).

Our study of the sufficient conditions for  (\ref{maximal_estimate}) gives rise to two critical endpoints $\alpha=1/4$ and $\alpha=n/2$, on either side of which the behaviour of the local Schr\"odinger maximal function changes. This is where our method of proof helps us to construct counter-examples by appropriately choosing $\widehat{f}$, supported away from zero. In each of the three counter-examples, we weave several ideas explored in the study of the sufficient conditions. In the process, we also obtain the failure of a delicate local borderline fractional Sobolev embedding. This failure can be viewed as a refinement of the failure of the corresponding global borderline fractional Sobolev embedding shown in \cite{CGM} (see Remark \ref{final_remark}), in the special case of rank one Riemannian symmetric spaces of non-compact type.

We now turn our attention to global maximal estimates. For Euclidean spaces, the following interesting transference principle was obtained by Rogers \cite[Theorem 3]{Rogers}: the local estimate
\begin{equation*}
\|\tilde{S}^*f\|_{L^2\left(B(o,1)\right)} \le C\: {\|f\|}_{H^{\alpha}(\R^n)}\:,
\end{equation*}
holds for $\alpha>\alpha_0$ if and only if the global estimate
\begin{equation*}
\|\tilde{S}^*f\|_{L^2\left(\R^n\right)} \le C' \: {\|f\|}_{H^{\alpha}(\R^n)}\:,
\end{equation*}
holds for  $\alpha>2\alpha_0$. Using this transference principle one can actually deduce the global estimate of Sj\"{o}lin (the case $q=2$ of (\ref{Rn_global_estimate})) from (\ref{euclid2}).
Though the argument given in \cite{Rogers} does not work for $\mathcal S$ but it turns out that an exact analogue of (\ref{Rn_global_estimate}) holds true for the special case of $\mathbb{H}^3$.
\begin{theorem}\label{SL(2,C)_global}
For all radial initial data $f$ belonging to the $L^2$-Schwartz class on $\mathbb{H}^3$, the global estimate
\begin{equation} \label{SL(2,C)_global_estimate}
{\left\|S^*f\right\|}_{L^q\left(\mathbb{H}^3\right)} \le C \: {\|f\|}_{H^{\alpha}(\mathbb{H}^3)}\:,
\end{equation}
holds true for $\alpha>1/2$, $q=2$. Moreover, the estimate (\ref{SL(2,C)_global_estimate}) fails for $q<2$\:.
\end{theorem}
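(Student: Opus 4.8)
The plan is to exploit the fact that on $\mathbb{H}^3$ the spherical functions have an explicit elementary form, which essentially reduces the problem to a one-dimensional Euclidean computation. Concretely, for $\mathbb{H}^3$ one has $\rho = 1$, $|{\bf c}(\lambda)|^{-2}$ is a constant multiple of $\lambda^2$, and $\varphi_\lambda(r) = \frac{\sin(\lambda r)}{\lambda \sinh r}$ (up to normalization). Substituting these into \eqref{schrodinger_soln} gives, for radial $f$ written in geodesic polar coordinates,
\begin{equation*}
S_t f(r) = \frac{c}{\sinh r}\int_0^\infty \sin(\lambda r)\, e^{it(\lambda^2+1)}\, \lambda\, \widehat f(\lambda)\, d\lambda,
\end{equation*}
so that $\sinh r \cdot S_t f(r)$ is (a constant times $e^{it}$ times) the one-dimensional free Schrödinger evolution applied to the odd function on $\mathbb{R}$ whose sine-transform is $\lambda\widehat f(\lambda)$. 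Since the volume element on $\mathbb{H}^3$ is $\sinh^2 r\, dr\, d\sigma$, the factor $\sinh r$ cancels one power and we are led to compare $\|S^* f\|_{L^2(\mathbb{H}^3)}$ with an $L^2(\mathbb{R})$ norm of the one-dimensional Schrödinger maximal function of $g$, where $\widehat{g}^{\mathbb{R}}(\lambda) = \lambda \widehat f(\lambda)$.

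The key steps, in order, are: (1) make the above substitution precise, identifying $S^*f$ with a one-dimensional Schrödinger maximal operator via the isometry $f \mapsto \sinh r \cdot f$ from radial $L^2(\mathbb{H}^3)$ into $L^2(\mathbb{R}^+)$, and checking that the cutoff $0<t<4/Q^2 = 1$ is harmless; (2) relate the Sobolev norm: $\|f\|_{H^\alpha(\mathbb{H}^3)}$ is comparable to $\|(1+\lambda^2)^{\alpha/2}\lambda\widehat f(\lambda)\|_{L^2(\lambda^2 d\lambda)}$ by the definition of $H^\alpha(\mathcal S)$ and the form of the Plancherel density, which matches the one-dimensional homogeneous/inhomogeneous Sobolev norm of $g$ after accounting for the extra $\lambda$; (3) invoke the sharp one-dimensional Euclidean global estimate $\|\widetilde S^* g\|_{L^2(\mathbb{R})} \le C\|g\|_{H^\alpha(\mathbb{R})}$ for $\alpha>1/2$, which is exactly the $n=1$, $q=2$ case of \eqref{Rn_global_estimate} (here the radiality restriction on $\mathbb{R}$ is vacuous, so this is Sjölin's theorem \cite{Sjolin3}), to conclude \eqref{SL(2,C)_global_estimate}; (4) for the failure when $q<2$, transfer the known failure of \eqref{Rn_global_estimate} for $q\in[1,2)$ backwards through the same identification — a function $g$ on $\mathbb{R}$ witnessing the failure pulls back to a radial $f$ on $\mathbb{H}^3$ witnessing the failure of \eqref{SL(2,C)_global_estimate}, using that near infinity $\sinh r \sim e^r/2$ so the weight only helps (improves decay) rather than hurts, while near the origin $\sinh r \sim r$ reproduces the Euclidean local behaviour.

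The main obstacle I expect is step (1)–(2): one must be careful that the map $f \mapsto \sinh r \cdot f$ does not quite send $H^\alpha(\mathbb{H}^3)$ onto $H^\alpha(\mathbb{R})$ on the nose, because the spectral variable for $\Delta_{\mathbb{H}^3}$ is $\lambda^2 + \rho^2$ rather than $\lambda^2$, and because the extra factor of $\lambda$ in the transform means we are really comparing with a mixed homogeneous-inhomogeneous norm; one must verify that for $\alpha>1/2$ these discrepancies are absorbed (the shift by $\rho^2=1$ is a bounded perturbation on the spectrum, and the extra $\lambda$ is controlled since we only claim $\alpha>1/2$, leaving room). A secondary subtlety in the failure direction is ensuring the counterexample $g$ can be taken with Fourier transform supported away from $\lambda=0$, so that dividing by $\lambda$ to define $f$ (i.e. $\widehat f(\lambda)=\lambda^{-1}\widehat g^{\mathbb{R}}(\lambda)$) produces a bona fide radial Schwartz-class function on $\mathbb{H}^3$; since the standard counterexamples for \eqref{Rn_global_estimate} already use data with frequency localized at large $|\lambda|$, this should pose no real difficulty.
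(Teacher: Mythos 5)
Your reduction to a one-dimensional Euclidean problem via the explicit spherical function $\varphi_\lambda(r)=\sin(\lambda r)/(\lambda\sinh r)$ is essentially the same mechanism the paper uses, only phrased in $n=1$ rather than $n=3$: the paper writes $\J_{1/2}(\lambda s)=c\,\sin(\lambda s)/(\lambda s)$ and compares $Tf$ with the radial $\mathbb{R}^3$ linearized maximal function $T_0(\mathscr{A}f)$, which is the same thing as your odd 1D reduction. Your positive direction ($q=2$) is correct: the map $f\mapsto\sinh r\cdot f$ is an isometry onto odd $L^2(\mathbb{R})$, the Sobolev norms match exactly (your worry about an ``extra $\lambda$'' is unfounded once you compute with the $\lambda^2\,d\lambda$ Plancherel density on $\mathbb{H}^3$ and ordinary $d\lambda$ on $\mathbb{R}$), and the $n=1$, $q=2$ global estimate of Sj\"olin closes the argument.

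Your step (4), however, has a genuine gap. Writing $g$ for the 1D companion, one has
\begin{equation*}
\|S^*f\|_{L^q(\mathbb{H}^3)}^q \;\asymp\; \int_0^\infty |\tilde S^*g(r)|^q\,(\sinh r)^{2-q}\,dr\:,
\end{equation*}
and for $q<2$ the weight $(\sinh r)^{2-q}$ is \emph{strictly less than $1$} on $(0,\sinh^{-1}1)$, so there is no pointwise domination of $\int_0^\infty|\tilde S^*g(r)|^q\,dr$ as your argument requires. Your comment that ``near the origin $\sinh r\sim r$ reproduces the Euclidean local behaviour'' in fact identifies the near-origin measure as $r^2\,dr$, the $\mathbb{R}^3$ radial measure, which is \emph{not} what the 1D failure you want to pull back controls. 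The paper sidesteps this entirely by comparing with $T_0(\mathscr{A}f)$ on $\mathbb{R}^3$ and observing $(\sinh s/s)^{2-q}\ge 1$ for all $s>0$ when $q\le 2$; this gives a clean one-sided bound $\|Tf\|_{L^q(\mathbb{H}^3)}\gtrsim\|T_0(\mathscr{A}f)\|_{L^q(\mathbb{R}^3)}$, and the contradiction with Sj\"olin's $\mathbb{R}^3$ radial failure is immediate. Your 1D version can be salvaged by adding the observation that $\int_0^1|\tilde S^*g(r)|^q\,dr\lesssim\|\tilde S^*g\|_{L^\infty}^q\lesssim\|g\|_{H^\alpha(\mathbb{R})}^q$ (via the Sobolev embedding $H^\alpha\hookrightarrow L^\infty$ for $\alpha>1/2$), so the near-origin contribution is always controlled by the right-hand side and can be subtracted off; but as written this extra step is missing, and the transfer does not go through on the nose.
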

It would be interesting to see if it is possible to generalise Theorem \ref{SL(2,C)_global} to the class of all Damek--Ricci spaces. As of now, we only have the following weak maximal estimate to offer (see also Remark \ref{assgn}).
\begin{theorem}\label{weakL2_global}
For all radial initial data $f$ belonging to the $L^2$-Schwartz class on a Damek-Ricci space $\mathcal S$, the global estimate
\begin{equation} \label{weakL2_global_estimate}
{\left\|S^*f\right\|}_{L^{2,\infty}\left(\mathcal S\right)} \lesssim \: {\|f\|}_{H^{\alpha}(\mathcal S)}\:,
\end{equation}
holds true for $\alpha>1/2$ \:.
\end{theorem}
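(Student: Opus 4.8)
We plan to prove \eqref{weakL2_global_estimate} by localising in space to the unit ball $B_1$ around the identity $o$ and its complement, and by separating the low and high frequencies of $f$ according to the dichotomy of the Plancherel density \eqref{plancherel_measure}. Since $L^{2,\infty}(\mathcal S)$ is a quasi-Banach space it suffices to bound $\|S^*f\|_{L^2(B_1)}$ and $\big\|(S^*f)\,\chi_{\{d(o,\cdot)\ge 1\}}\big\|_{L^{2,\infty}(\mathcal S)}$ separately. The first is immediate from Theorem \ref{theorem}: since $\alpha>1/2>1/4$ we have $2\le 2n/(n-2\alpha)$ whenever $\alpha<n/2$, while cases (iii)--(iv) cover $\alpha\ge n/2$, so $\|S^*f\|_{L^2(B_1)}\lesssim\|f\|_{H^\alpha(\mathcal S)}$; combined with $\|\cdot\|_{L^{2,\infty}}\le\|\cdot\|_{L^2}$ this disposes of $B_1$, and all the work lies on $B_1^c$.

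Write $\widehat f=\widehat{f_0}+\widehat{f_\infty}$ with a smooth cut-off, so that $\widehat{f_0}$ is supported in $[0,2]$, $\widehat{f_\infty}$ in $[1,\infty)$, and $\|f_0\|_{H^\alpha(\mathcal S)}+\|f_\infty\|_{H^\alpha(\mathcal S)}\lesssim\|f\|_{H^\alpha(\mathcal S)}$. For the low-frequency piece a pointwise estimate on $\varphi_\lambda$ is of no use --- the sharp bound $\varphi_0(x)\asymp(1+d(o,x))\,e^{-\rho\,d(o,x)}$ just fails to lie in $L^{2,\infty}(\mathcal S)$ --- so instead we fix $s\in(1/2,1)$ and combine the one-dimensional Sobolev embedding $H^s(0,1/\rho^2)\hookrightarrow C[0,1/\rho^2]$ in the time variable with Plancherel for the spherical transform in the space variable. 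By Tonelli's theorem,
\[
\|S^*f_0\|_{L^2(\mathcal S)}^2\lesssim\int_{\mathcal S}\big\|\,t\mapsto S_tf_0(x)\,\big\|_{H^s(0,1/\rho^2)}^2\,dx
\asymp\int_0^{1/\rho^2}\!\|S_tf_0\|_{L^2(\mathcal S)}^2\,dt+\iint\frac{\|S_tf_0-S_{t'}f_0\|_{L^2(\mathcal S)}^2}{|t-t'|^{1+2s}}\,dt\,dt'.
\]
Since $\widehat{S_tf_0}(\lambda)=e^{it(\lambda^2+\rho^2)}\widehat{f_0}(\lambda)$ by \eqref{schrodinger_soln} and $\widehat{f_0}$ has bounded support, Plancherel gives $\|S_tf_0\|_{L^2(\mathcal S)}=\|f_0\|_{L^2(\mathcal S)}$ and $\|S_tf_0-S_{t'}f_0\|_{L^2(\mathcal S)}^2\lesssim\min(1,|t-t'|^2)\,\|f_0\|_{L^2(\mathcal S)}^2$, so both time integrals are finite and $\|S^*f_0\|_{L^2(\mathcal S)}\lesssim\|f_0\|_{L^2(\mathcal S)}\lesssim\|f\|_{H^\alpha(\mathcal S)}$; a fortiori its restriction to $B_1^c$ is controlled in $L^{2,\infty}$.

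For the high-frequency piece on $B_1^c$ the plan is to invoke the refined Harish--Chandra expansion \eqref{anker_series_expansion}, from which --- using $|\mathbf c(-\lambda)|=|\mathbf c(\lambda)|$ and the fact that the generalised spherical functions occurring there are $\lesssim e^{-\rho\,d(o,x)}$ uniformly in $\lambda$ --- one obtains $|\varphi_\lambda(x)|\lesssim|\mathbf c(\lambda)|\,e^{-\rho\,d(o,x)}$ for $d(o,x)\ge 1$ and $|\lambda|\ge 1$. Discarding the time oscillation, for $r:=d(o,x)\ge 1$,
\[
\sup_{0<t<1/\rho^2}\big|S_tf_\infty(x)\big|\le\int_1^\infty|\varphi_\lambda(x)|\,|\widehat{f_\infty}(\lambda)|\,|\mathbf c(\lambda)|^{-2}\,d\lambda\lesssim e^{-\rho r}\int_1^\infty\frac{|\widehat{f_\infty}(\lambda)|}{|\mathbf c(\lambda)|}\,d\lambda,
\]
and by Cauchy--Schwarz the last integral is at most $\big(\int_1^\infty(1+\lambda^2)^{-\alpha}\,d\lambda\big)^{1/2}\|f_\infty\|_{H^\alpha(\mathcal S)}$, the $\lambda$-integral being finite \emph{precisely} because $\alpha>1/2$ --- this is the source of the regularity threshold. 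Hence $S^*f_\infty(x)\lesssim\|f\|_{H^\alpha(\mathcal S)}\,e^{-\rho\,d(o,x)}$ on $B_1^c$, and it remains to note that $e^{-\rho\,d(o,\cdot)}\chi_{\{d(o,\cdot)\ge 1\}}\in L^{2,\infty}(\mathcal S)$: because $|B(o,t)|\asymp e^{2\rho t}$ for $t\ge 1$, this function has distribution function $\asymp\sigma^{-2}$ at height $\sigma$, so its weak $L^2$ quasinorm is finite. Adding the three contributions yields \eqref{weakL2_global_estimate}.

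The main obstacle is the high-frequency step, specifically the \emph{uniform in $\lambda$} decay of the generalised spherical functions in \eqref{anker_series_expansion}: this is exactly the improvement of the refined series over the classical Harish--Chandra expansion, whose coefficients are not uniform in $\lambda$ and do not seem adequate here. It is worth emphasising why only the weak-type bound is obtained on $B_1^c$: the high-frequency estimate rests entirely on the pointwise bound $S^*f_\infty\lesssim e^{-\rho\,d(o,\cdot)}$, and the function $e^{-\rho\,d(o,\cdot)}$ sits exactly on the $L^{2,\infty}$ borderline against the exponential volume growth $e^{2\rho t}$ --- it is not in $L^2(\mathcal S)$. Upgrading to a genuine $L^2$ bound, as in Theorem \ref{SL(2,C)_global} where the spherical functions on $\mathbb H^3$ are explicit, would require retaining the time oscillation and treating the tail of \eqref{anker_series_expansion} as a superposition of one-dimensional Schr\"odinger evolutions, which the present crude argument deliberately sidesteps.
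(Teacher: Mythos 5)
Your proof is correct, but your detour through a low/high frequency split on $B_1^c$ is unnecessary, and the reason is worth understanding. You dismiss the pointwise bound on $\varphi_\lambda$ for the low-frequency piece on the grounds that $\varphi_0(x)\asymp(1+d(o,x))e^{-\rho d(o,x)}$ is not in $L^{2,\infty}(\mathcal S)$; but the relevant bound is not a $\lambda$-uniform one, it is $|\varphi_\lambda(s)|\lesssim |\mathbf c(\lambda)|\,e^{-\rho s}$ for $s\ge c$ and all $\lambda\neq 0$ (this is exactly \eqref{pointwise_phi_lambda}). The blowup of $|\mathbf c(\lambda)|$ as $\lambda\to 0$ is precisely cancelled by the Plancherel density $|\mathbf c(\lambda)|^{-2}$ appearing in the Sobolev norm: writing $|\varphi_\lambda(s)|\,|\widehat f(\lambda)|\,|\mathbf c(\lambda)|^{-2}\lesssim e^{-\rho s}\,|\widehat f(\lambda)|\,|\mathbf c(\lambda)|^{-1}$ and applying Cauchy--Schwarz against $(\lambda^2+\rho^2)^{-\alpha}$ gives
\[
|Tf(s)|\;\lesssim\; e^{-\rho s}\,\|f\|_{H^\alpha(\mathcal S)}\Bigl(\int_0^\infty(\lambda^2+\rho^2)^{-\alpha}\,d\lambda\Bigr)^{1/2},
\]
and the $\lambda$-integral converges, \emph{over all of $(0,\infty)$}, precisely when $\alpha>1/2$: the integrand is bounded near $\lambda=0$ regardless of $\alpha$, and only the tail needs $\alpha>1/2$. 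This is the route the paper takes: it treats the entire frequency range on the exterior region in one stroke and then only has to observe that $e^{-\rho s}\chi_{\{s>R_0\}}$ belongs to $L^{2,\infty}$.

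Your alternative handling of $f_0$ via the one-dimensional Sobolev embedding $H^s(0,1/\rho^2)\hookrightarrow C[0,1/\rho^2]$ and Plancherel is valid and even yields the stronger conclusion $S^*f_0\in L^2(\mathcal S)$ globally — a nice observation in its own right — but it is machinery the paper does not need. A secondary slip: the bound $|\varphi_\lambda(s)|\lesssim|\mathbf c(\lambda)|e^{-\rho s}$ does not require the decay in $\lambda$ of the APV coefficients \eqref{coefficient_estimate} (the $\mu=0$ term $\Gamma_0\equiv 1$ contributes no such decay anyway); it already follows from mere polynomial-in-$\mu$ bounds on the coefficients, so the ``improvement over the classical Harish--Chandra expansion'' is not the crux here, contrary to what you suggest at the end. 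Your treatment of the ball $B_1$ and the final assembly via the quasi-triangle inequality in $L^{2,\infty}$ are fine.
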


We end this commentary with the following remarks.




\begin{enumerate}
\item Theorem \ref{theorem}, $(i)$, asserts the sharpness of the condition $\alpha \ge 1/4$, for the Carleson's problem considered in \cite{Dewan} and thereby providing complete solution to the problem for all Damek--Ricci spaces under the assumption of radiality on the initial data $f$.
\item In the degenerate case of $\mathbb H^n$, our arguments can be carried out verbatim to obtain an analogue of Theorem \ref{theorem}.
\item Throughout this article, in the definition of the maximal function (\ref{maximal_fn_defn}), the time has been localized in the interval $(0,1/\rho^2)$. It has been done only to
estimate certain oscillatory integrals (see Lemma \ref{oscillatory_integral_estimate}) more conveniently. However, it is easy to see that our results continue to hold when the time is instead localized in $(0,c)$ for any $c>0$.
\end{enumerate}

This article is organized as follows. In Section $2$, we recall certain aspects of Euclidean Fourier analysis and the essential preliminaries about Damek--Ricci spaces and spherical Fourier analysis thereon. In section $3$, we obtain a Schwartz correspondence theorem  and an oscillatory integral estimate which are useful for the proof of Theorem \ref{theorem}. The sufficient and necessary conditions of Theorem \ref{theorem} are proved in sections $4$ and $5$ respectively. In section $6$, we prove the global estimates Theorems \ref{SL(2,C)_global} and \ref{weakL2_global}.

Throughout this article, the symbols $c$ and $C$ will denote positive constants whose values may change on each occurrence. $\N$ will denote the set of positive integers. Two non-negative functions $f_1$ and $f_2$ will be said to satisfy,
\begin{enumerate}
\item[a)] $\displaystyle{f_1 \lesssim f_2}$ if there exists a constant $C \ge 1$, so that $\displaystyle{f_1 \le C f_2}$.
\item[b)] $\displaystyle{f_1 \asymp f_2}$ if there exists a constant $C \ge 1$, so that $\displaystyle{\frac{1}{C} f_1 \le f_2 \le Cf_1}$.
\end{enumerate}

\section{Preliminaries}
In this section, we recall some preliminaries and fix our notations.
\subsection{Fourier analysis on $\R^n$:}
In this subsection, we recall some results from Euclidean Fourier analysis, most of which can be found in \cite{Grafakos, SW}.
On $\R$, for ``nice" functions $f$, the Fourier transform is denoted by $\wtilde{f}$.
We will need the following version of Pitt's inequality \cite[p. 489]{Stein}:
for $\alpha\geq 0$ and $p\in (1,2]$, one has the inequality
\begin{equation}\label{Pitt's_ineq}
{\left(\int_\R {\left|\wtilde{f}(\xi)\right|}^2 \:{|\xi|}^{-2 \alpha} d\xi\right)}^{1/2} \lesssim {\left(\int_\R {\left|f(x)\right|}^p\: {|x|}^{\alpha_1 p} dx\right)}^{1/p}\:,
\end{equation}
where $\displaystyle{\alpha_1 = \alpha + \frac{1}{2}- \frac{1}{p}}$, and $\displaystyle{0 \le \alpha_1 < 1 - \frac{1}{p}}$.

$\mathscr{S}(\R)$
will denote the Schwartz class functions on $\R$ and $\mathscr{S}(\R)_{e}$ will denote the even functions in $\mathscr{S}(\R)$.
For $\alpha \in \C$, $Re(\alpha)\in \R^+$, the Riesz potential of order $\alpha$ is the operator
\begin{equation*}
I_\alpha = {(-\Delta)}^{-\alpha/2}\:,
\end{equation*}
which can be explicitly written as,
\begin{equation*}
I_\alpha(f)(x)=C_\alpha \int_\R f(y)\: {|x-y|}^{\alpha -1}\:dy\:,
\end{equation*}
for some $C_\alpha>0$, whenever $f \in \mathscr{S}(\R)$. For $f \in \mathscr{S}(\R)$, we have the following identity:
\begin{equation} \label{riesz_identity}
(I_\alpha(f))^\sim(\xi)= C_\alpha {|\xi|}^{-\alpha}\wtilde{f}(\xi)\:.
\end{equation}
$\mathscr{S}(\R^n)$ will denote the Schwartz class functions on $\R^n$ and $\mathscr{S}(\R^n)_{o}$ will denote the radial Schwartz class functions.
It is known that the Fourier transform
\begin{equation*}
\mathscr{F}: \mathscr{S}(\R^n)_{o} \to \mathscr{S}(\R)_{e},
\end{equation*}
is a bijection. For $f \in \mathscr{S}(\R^n)_{o}$, the solution $u(x,t)$ of the Schr\"odinger equation (\ref{sch Rn}) is given by (\ref{Rn prop}).

\subsection{Spherical Fourier analysis on $\mathcal S$:}
In this section, we will explain the notations and state relevant results on Damek--Ricci spaces. Most of these results can be found in \cite{ADY, APV, A}.

Let $\mathfrak n$ be a two-step real nilpotent Lie algebra equipped with an inner product $\langle, \rangle$. Let $\mathfrak{z}$ be the center of $\mathfrak n$ and $\mathfrak v$ its orthogonal complement. We say that $\mathfrak n$ is an $H$-type algebra if for every $Z\in \mathfrak z$ the map $J_Z: \mathfrak v \to \mathfrak v$ defined by
\begin{equation*}
\langle J_z X, Y \rangle = \langle [X, Y], Z \rangle, \:\:\:\: X, Y \in \mathfrak v
\end{equation*}
satisfies the condition $J_Z^2 = -|Z|^2I_{\mathfrak v}$, $I_{\mathfrak v}$ being the identity operator on $\mathfrak v$. Hence, $\mathfrak v$ is even dimensional. A connected and simply connected Lie group $N$ is called an $H$-type group if its Lie algebra is $H$-type. Since $\mathfrak n$ is nilpotent, the exponential map is a diffeomorphism
and hence we can parametrize the elements in $N = \exp \mathfrak n$ by $(X, Z)$, for $X\in \mathfrak v, Z\in \mathfrak z$. It follows from the Baker-Campbell-Hausdorff formula that the group law in $N$ is given by
\begin{equation*}
\left(X, Z \right) \left(X', Z' \right) = \left(X+X', Z+Z'+ \frac{1}{2} [X, X']\right), \:\:\:\: X, X'\in \mathfrak v; ~ Z, Z'\in \mathfrak z.
\end{equation*}
The group $A = \R^+$ acts on an $H$-type group $N$ by nonisotropic dilation: $(X, Z) \mapsto (\sqrt{a}X, aZ)$. Let $\mathcal S = NA$, be the semidirect product of $N$ and $A$ under the above action.
Then $\mathcal S$ is a solvable, connected and simply connected Lie group having Lie algebra $\mathfrak s = \mathfrak v \oplus \mathfrak z \oplus \R$.
We write $na = (X, Z, a)$, for the element $\exp(X + Z)a, X\in \mathfrak v, Z \in \mathfrak z, a\in A$.
We write $m_\mv$ and $m_z$ to denote the dimensions of $\mv$ and $\z$ respectively. Let $n$ and $Q$ be the dimension and the homogenous dimension of $\mathcal S$ respectively:
\begin{equation*}
n=m_{\mv}+m_\z+1, \:\:\:\:\:\:\:\:\:\:\: Q = \frac{m_\mv}{2} + m_\z=2\rho.
\end{equation*}
The group $\mathcal S$ is equipped with the left-invariant Riemannian metric induced by
\begin{equation*}
\langle (X,Z,l), (X',Z',l') \rangle = \langle X, X' \rangle + \langle Z, Z' \rangle + ll'
\end{equation*}
on $\mathfrak s$. For $x \in \mathcal S$, we denote by
\begin{equation*}
s=d(e,x),
\end{equation*}
the geodesic distance of $x$ from the identity $e$. Then the left Haar measure $dx$ of the group $\mathcal S$ may be normalized so that
\begin{equation*}
dx= A(s)\:ds\:d\sigma(\omega)\:,
\end{equation*}
where the density function $A$ is given by,
\begin{equation*}
A(s)= 2^{m_\mv + m_\z} \:{\left(\sinh (s/2)\right)}^{m_\mv + m_\z}\: {\left(\cosh (s/2)\right)}^{m_\z} \:,
\end{equation*}
and $d\sigma$ is the surface measure of the unit sphere. Using well-known estimates it follows that
\begin{equation}\label{density_function}
A(s)\asymp
\begin{cases}
	 s^{n-1},\:\:\:\:\:\:\:\:\:\:0<s<R<\infty\:, \\
	 e^{2\rho s},\:\:\:\:\:\:\:\:\:\:R<s<\infty.
	\end{cases}
\end{equation}
A function $f: \mathcal S \to \C$ is said to be radial if, for all $x$ in $\mathcal S$, $f(x)$ depends only on the geodesic distance of $x$ from the identity $e$. If $f$ is radial, then it follows from the expression of the left Haar measure given above that
\begin{equation*}
\int_{\mathcal S} f(x)~dx=\int_{0}^\infty f(s)~A(s)~ds\:.
\end{equation*}
We now recall the notion of spherical functions on $\mathcal S$. The spherical functions $\varphi_\lambda$ on $\mathcal S$, for $\lambda \in \C$ are the radial eigenfunctions of the Laplace--Beltrami operator $\Delta$, satisfying the following normalization criterion
\begin{equation*}
\begin{cases}
 & \Delta \varphi_\lambda = - \left(\lambda^2 +\rho^2\right) \varphi_\lambda  \\
& \varphi_\lambda(e)=1 \:.
\end{cases}
\end{equation*}
For all $\lambda \in \R$ and $x \in \mathcal S$, the spherical functions satisfy the following properties
\begin{eqnarray}
&&\varphi_\lambda(x)=\varphi_\lambda(s)= \varphi_{-\lambda}(s)\:,\nonumber\\
&& \left|\varphi_\lambda(x)\right|=\left|\varphi_\lambda(s)\right| \le 1\:.\label{phi_lambda_bound}
\end{eqnarray}
For suitable radial functions $f:\mathcal S\rightarrow\C$, the spherical Fourier transform and the spherical Fourier inversion are given by
\begin{equation*}
\widehat{f}(\lambda):= \int_{\mathcal S} f(x) \varphi_\lambda(x) dx = \int_0^\infty f(s) \varphi_\lambda(s) A(s) ds,\:\:\:\:\:\:\:\lambda\in\R^+= (0,\infty)\:,
\end{equation*}
\begin{equation*}
f(x)= C \int_{0}^\infty \widehat{f}(\lambda)\varphi_\lambda(x) {|{\bf c}(\lambda)|}^{-2}
d\lambda\:,
\end{equation*}
where ${\bf c}$ denotes Harish-Chandra's ${\bf c}$-function and is given by the explicit formula
\begin{equation}\label{hc}
{\bf c}(\lambda)= \frac{4^{(\rho-i\lambda)} \Gamma(2i\lambda)}{\Gamma\left(\rho+i\lambda\right)} \frac{\Gamma\left(\frac{n}{2}\right)}{\Gamma\left(\frac{m_\mv + 4i\lambda+2}{4}\right)}\:,\:\:\:\:\:\lambda \in \R\setminus\{0\}.
\end{equation}
The constant $C$ in the Fourier inversion depends only on $m_\mv$ and $m_\z$. It is known that the spherical Fourier transform extends to an isometry from the space of radial $L^2$ functions on $\mathcal S$ onto $L^2\left(\R^+,C{|{\bf c}(\lambda)|}^{-2} d\lambda\right)$.  From (\ref{hc}), one can deduce the following information regarding ${|{\bf c}(\lambda)|}^{-2}$, for $\lambda\in\R^+$ (see \cite[Lemma 4.8]{RS}, \cite[Lemma 4.2]{A}))
\begin{eqnarray}
{\bf c}(-\lambda)&=&\overline{{\bf c}(\lambda)}\:,\nonumber\\
{|{\bf c}(\lambda)|}^{-2} &\asymp& \:{\lambda}^2 {\left(1+|\lambda|\right)}^{n-3}\:,\label{plancherel_measure}\\
\left|\frac{d^j}{d \lambda^j}{|{\bf c}(\lambda)|}^{-2}\right| &\lesssim_j& {(1+|\lambda|)}^{n-1-j} \:, \:\:\:\:\:\: j \in \N \cup \{0\}.\label{c-fn_derivative_estimates}
\end{eqnarray}

For $\alpha \ge 0$, the fractional $L^2$-Sobolev spaces $H^{\alpha}(\mathcal S)$, for the class of radial functions, are defined by \cite{APV}
\begin{eqnarray} \label{sobolev_space_defn}
&&H^\alpha(\mathcal S)\\
&:=&\left\{f \in L^2(\mathcal S): {\|f\|}_{H^\alpha(\mathcal S)}:= {\left(\int_0^\infty {\left(\lambda^2 + \rho^2\right)}^\alpha {|\widehat{f}(\lambda)|}^2 {|{\bf c}(\lambda)|}^{-2} d\lambda\right)}^{1/2}< \infty\right\}.\nonumber
\end{eqnarray}
From the Plancherel theorem it follows that ${\|f\|}_{H^0(\mathcal S)}={\|f\|}_{L^2(\mathcal S)}$.

Next, we need to introduce the notion of Schwartz class. Since $\mathcal S$ is of exponential volume growth the requirement of $L^p$ containment implies that the definition of Schwartz class should involve $p$. In this paper it will be sufficient for us to deal with the case $p=2$. We therefore define $\mathscr{S}^2(\mathcal S)_{o}$, the class of radial $L^2$-Schwartz class functions on $\mathcal S$ by
\begin{eqnarray} \label{schwartz_defn}
&&\mathscr{S}^2(\mathcal S)_{o}\nonumber\\
&=&\left\{f\in C^\infty(\mathcal S)_{o}\mid\sup_{s>0}{(1+s)}^N e^{\rho s} \left|{\left(\frac{d}{ds}\right)}^M f(s)\right|<\infty, \{M, N\}\subset\N \cup \{0\}\right\}\:,
\end{eqnarray}
where $C^\infty(\mathcal S)_{o}$ is the set of radial smooth functions on $\mathcal S$ (see \cite[p. 652]{ADY}).

One important tool which relates the spherical Fourier transform on $\mathcal S$ with the Fourier transform on $\R$ is the so called Abel transform (see \cite{cowl, RS}). Indeed,
we have the following commutative diagram, where every map is a bijection:
\[
\begin{tikzcd}[row sep=1.4cm,column sep=1.4cm]
\mathscr{S}^2(\mathcal S)_{o}\arrow[r,"\mathscr{A}_{\mathcal S,\R}"] \arrow[dr,swap,"\wedge"] & {\mathscr{S}(\R)}_{e}
\arrow[d,"\sim"] \\
&  {\mathscr{S}(\R)}_{e}&
\end{tikzcd}
\]

Here $\mathscr{A}_{\mathcal S,\R}:\mathscr{S}^2(\mathcal S)_{o}\rightarrow {\mathscr{S}(\R)}_{e}$ is the Abel transform and $\wedge$, $\sim$ denote the spherical Fourier transform and the $1$-dimensional Euclidean Fourier transform respectively. For more details regarding the Abel transform, we refer the reader to \cite[p. 652-653]{ADY}. These have been generalized to the setting of Ch\'ebli-Trim\`eche Hypergroups \cite{BX}, whose simplest case is that of radial functions on $\R^n$.

Our next topic is to do with the series expansions of $\varphi_{\lambda}$, alluded to in the introduction. The first one is an expansion of $\varphi_\lambda$ in terms of Bessel functions for points near the identity and is given by the following lemma.
\begin{lemma}\cite[Theorem 3.1]{A} \label{bessel_series_expansion}
There exists $R_0, 2<R_0<2R_1$, such that for any $0 \le s \le R_0$, and any integer $M \ge 0$, and all $\lambda \in \R$, we have
\begin{equation*}
\varphi_\lambda(s)= c_0 {\left(\frac{s^{n-1}}{A(s)}\right)}^{1/2} \displaystyle\sum_{l=0}^M a_l(s)\J_{\frac{n-2}{2}+l}(\lambda s) s^{2l} + E_{M+1}(\lambda,s)\:,
\end{equation*}
where
\begin{equation*}
c_0 = 2^{m_\z}\: \pi^{-1/2}\: \frac{\Gamma(n/2)}{\Gamma((n-1)/2)}\:,\:\:\:\:\:\:a_0 \equiv 1\:,\:\:\:\:\: |a_l(s)| \le C {(4R_1)}^{-l}\:,
\end{equation*}
and the error term has the following behaviour
\begin{equation*}
	\left|E_{M+1}(\lambda,s) \right| \le C_M \begin{cases}
	 s^{2(M+1)}  & \text{ if  }\: |\lambda s| \le 1 \\
	s^{2(M+1)} {|\lambda s|}^{-\left(\frac{n-1}{2} + M +1\right)} &\text{ if  }\: |\lambda s| > 1 \:.
	\end{cases}
\end{equation*}
Moreover, for every $0 \le s <2$, the series
\begin{equation*}
\varphi_\lambda(s)= c_0 {\left(\frac{s^{n-1}}{A(s)}\right)}^{1/2} \displaystyle\sum_{l=0}^\infty a_l(s)\J_{\frac{n-2}{2}+l}(\lambda s) s^{2l}\:,
\end{equation*}
is absolutely convergent.
\end{lemma}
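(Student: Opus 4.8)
The plan is to obtain the Bessel‐series expansion of $\varphi_\lambda$ by exploiting the fact that, near the identity, $\mathcal{S}$ looks metrically like $\R^n$, and that the radial part of $\Delta$ differs from the Euclidean radial Laplacian only by a lower‐order perturbation governed by the density $A(s)$. Concretely, write $\varphi_\lambda(s) = \left(s^{n-1}/A(s)\right)^{1/2} \psi_\lambda(s)$; since $\varphi_\lambda$ solves the radial equation $\varphi_\lambda'' + (A'/A)\varphi_\lambda' + (\lambda^2+\rho^2)\varphi_\lambda = 0$, a direct computation shows that $\psi_\lambda$ satisfies a Schr\"odinger‐type equation $\psi_\lambda'' + \frac{n-1}{s}\psi_\lambda' + \left(\lambda^2 - V(s)\right)\psi_\lambda = 0$, where the potential $V(s) = \frac14\left(A'/A\right)^2 + \frac12\left(A'/A\right)' - \frac{(n-1)(n-3)}{4s^2} - \rho^2 + \frac{n-1}{s}\cdot(\dots)$ is, using $A(s)\asymp s^{n-1}$ and the explicit product of $\sinh$ and $\cosh$ factors, a \emph{smooth even} function of $s$ on a neighbourhood of $0$ (the apparent $s^{-2}$ singularities cancel). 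Thus $\psi_\lambda$ solves the same ODE as a multiple of $\J_{(n-2)/2}(\lambda s)$ but with a smooth bounded perturbation $V$.

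Next I would solve this perturbed equation by a Neumann/Duhamel iteration in the variable $s$, using the variation‐of‐parameters formula for the Bessel operator $L = \partial_s^2 + \frac{n-1}{s}\partial_s + \lambda^2$. The kernel of the resolvent of $L$ is built from the two independent solutions $\J_{(n-2)/2}(\lambda s)$ and the second Bessel solution; iterating $\psi_\lambda = \J_{(n-2)/2}(\lambda s) + L^{-1}(V\psi_\lambda)$ and expanding $V(s) = \sum_l v_l s^{2l}$ generates, after regrouping, a series $\sum_{l\ge 0} a_l(s)\J_{(n-2)/2+l}(\lambda s) s^{2l}$: this works because each application of $L^{-1}$ to $s^{2l}\J_{(n-2)/2+k}(\lambda s)$ produces, via the Bessel recurrence relations for $\int r^{\mu+1}\J_\mu(\lambda r)\,dr$‐type integrals, a linear combination of $s^{2(l+1)}\J_{(n-2)/2+k}(\lambda s)$ and $s^{2l}\J_{(n-2)/2+k+1}(\lambda s)$ terms, exactly raising the index pattern in the claimed way. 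The coefficients $a_l$ are then recursively determined smooth functions with $a_0\equiv 1$, and a geometric‐growth bound $|v_l|\lesssim R_1^{-2l}$ on the Taylor coefficients of $V$ (which holds since $V$ is analytic in a disc of radius $\asymp R_1$) propagates to $|a_l(s)|\le C(4R_1)^{-l}$ by the standard majorant argument. Absolute convergence for $s<2$ follows once $R_1$ is chosen so that $2<2R_1$ and $R_0<2R_1$, using the uniform bound $|\J_\mu(\lambda s)|\lesssim 1$ valid for all real arguments.

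For the truncated expansion with remainder $E_{M+1}$, I would simply stop the iteration after $M$ steps: $E_{M+1}$ is the tail, expressed as an $(M{+}1)$‐fold Duhamel integral against $V$, hence carries a factor $s^{2(M+1)}$ from the powers of $s$ accumulated. The two‐regime bound on $E_{M+1}$ comes from inserting the standard Bessel asymptotics into this integral representation: for $|\lambda s|\le 1$ one uses $|\J_\mu(\lambda s)|\lesssim 1$ to get the clean $s^{2(M+1)}$ bound, while for $|\lambda s|>1$ one uses $|J_\mu(z)|\lesssim z^{-1/2}$, so that $|\J_\mu(\lambda s)| = |2^\mu\sqrt\pi\,\Gamma(\mu+\tfrac12)J_\mu(\lambda s)(\lambda s)^{-\mu}|\lesssim (\lambda s)^{-\mu-1/2}$ with $\mu = \tfrac{n-2}{2}+M+1$, producing the factor $|\lambda s|^{-((n-1)/2 + M + 1)}$. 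I expect the main technical obstacle to be the bookkeeping in the Duhamel iteration — showing that the regrouping into the $a_l(s)\,\J_{(n-2)/2+l}(\lambda s)\,s^{2l}$ pattern is \emph{exactly} consistent (i.e. that cross terms from raising the index and raising the power of $s$ combine correctly) and that the constants in the error estimate are uniform in $\lambda\in\R$ and $s\in[0,R_0]$; this is precisely where one must be careful that the radius $R_0$ (and the auxiliary $R_1$) is taken small enough relative to the radius of analyticity of $V$. Since this lemma is quoted from \cite[Theorem 3.1]{A}, I would in practice cite that reference, but the sketch above is the route I would follow to reconstruct it.
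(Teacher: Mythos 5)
The paper offers no proof of this lemma; it is invoked as a citation to Astengo's \cite[Theorem~3.1]{A}, which in turn is a Damek--Ricci adaptation of the Stanton--Tomas expansion \cite{ST}. So there is no in-paper argument to compare against, and your text is a reconstruction of that reference.

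Your opening transformation is correct: with $\varphi_\lambda(s)=(s^{n-1}/A(s))^{1/2}\psi_\lambda(s)$ the first-order coefficient becomes $(n-1)/s$, the potential $V$ is even and analytic near $0$ (the $s^{-2}$ singularities cancel exactly as you say), and one lands on a perturbed Bessel equation. But the claim driving your iteration — that applying $L^{-1}$ to $s^{2l}\J_{(n-2)/2+k}(\lambda s)$ produces only a combination of $s^{2(l+1)}\J_{(n-2)/2+k}(\lambda s)$ and $s^{2l}\J_{(n-2)/2+k+1}(\lambda s)$ — is not right as stated. Writing $\mu=(n-2)/2$ and using $L=L_\mu=\partial_s^2+\tfrac{n-1}{s}\partial_s+\lambda^2$ together with $\tfrac{d}{ds}\J_\nu(\lambda s)=-\tfrac{\lambda^2 s}{2\nu+1}\J_{\nu+1}(\lambda s)$, one finds
\begin{gather*}
L\bigl(s^4\J_\mu(\lambda s)\bigr)=(4n+8)\,s^2\J_\mu(\lambda s)-\frac{8\lambda^2}{n-1}\,s^4\J_{\mu+1}(\lambda s),\\
L\bigl(s^2\J_{\mu+1}(\lambda s)\bigr)=2n\,\J_{\mu+1}(\lambda s)-\frac{2\lambda^2}{n+1}\,s^2\J_{\mu+2}(\lambda s),
\end{gather*}
so inverting $L$ against $s^2\J_\mu$ forces an infinite cascade through all higher Bessel orders, with explicitly $\lambda$-dependent weights, and also produces ``off-diagonal'' terms such as $s^4\J_\mu$ that are not of the form $s^{2l}\J_{\mu+l}$. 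The three-term Bessel recurrence only converts $s^2\J_{\nu+1}$ into a combination of $\J_\nu$ and $\J_{\nu-1}$ — the wrong direction for regrouping into the claimed pattern. The ``bookkeeping'' you flag as the main obstacle is therefore the entire substance of the proof: one must show this cascade resums into $\lambda$-\emph{independent} coefficients $a_l(s)$ multiplying precisely $s^{2l}\J_{(n-2)/2+l}(\lambda s)$, with the uniform geometric bound $|a_l(s)|\le C(4R_1)^{-l}$. Your sketch does not make that plausible, and the majorant argument would have to control the cascade, not merely the Taylor coefficients of $V$.

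The route actually taken in \cite{ST,A} is structurally different and avoids this issue entirely. One starts from the Abel-transform (Weyl fractional integral) representation of $\varphi_\lambda$ as an integral of $\cos(\lambda r)$ against an explicit kernel $a(s,r)$ supported in $|r|\le s$; expands $a(s,r)$ in powers of $(s^2-r^2)$ with $s$-dependent coefficients; and integrates term by term using Poisson's formula $\J_\nu(\lambda s)=c_\nu\,s^{-2\nu}\int_0^s(s^2-r^2)^{\nu-1/2}\cos(\lambda r)\,dr$. The $a_l(s)$ appear directly and are manifestly $\lambda$-free; their geometric decay is the analyticity of the kernel in a disc of radius comparable to $R_1$; and the error $E_{M+1}$ reduces to a \emph{single} residual Bessel function $\J_{(n-2)/2+M+1}(\lambda s)$ coming from the kernel remainder, to which your small/large-argument asymptotics apply verbatim — rather than to an $(M+1)$-fold Duhamel integral. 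Your asymptotics at the end are correct but applied to the wrong object. If you want to salvage the ODE route you would need to supply the resummation argument; otherwise the Abel-transform route is the one that actually delivers the lemma as stated.
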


For the asymptotic behavior of $\varphi_\lambda$ away from the identity, we have the following series expansion which is analogous to the classical Harish-Chandra series (but with better behaving coefficients) proved in \cite[pp. 735-736]{APV}:
\begin{equation} \label{anker_series_expansion}
\varphi_\lambda(s)= 2^{-\frac{m_\z}{2}} {A(s)}^{-\frac{1}{2}} \left\{{\bf c}(\lambda)  \displaystyle\sum_{\mu=0}^\infty \Gamma_\mu(\lambda) e^{(i \lambda-\mu) s} + {\bf c}(-\lambda) \displaystyle\sum_{\mu=0}^\infty \Gamma_\mu(-\lambda) e^{-(i\lambda + \mu) s}\right\}\:.
\end{equation}
The above series converges for $\lambda \in \R \setminus \{0\}$, uniformly on compacts not containing the group identity, where $\Gamma_0 \equiv 1$ and for $\mu \in \N$, one has the recursion formula,
\begin{equation*}
(\mu^2-2i\mu\lambda) \Gamma_\mu(\lambda) = \displaystyle\sum_{j=0}^{\mu -1}\omega_{\mu -j}\Gamma_{j}(\lambda)\:.
\end{equation*}
The coefficients $\Gamma_\mu(\lambda)$ satisfy the following estimate \cite[Lemma 1]{APV}: there exist constants $C>0$, $d\ge 0$, such that
\begin{equation} \label{coefficient_estimate}
\left|\Gamma_\mu(\lambda)\right| \le C \mu^d {\left(1+|\lambda|\right)}^{-1}\:,\:\:\:\:\:\:\lambda \in \R \setminus \{0\},\:\:\: \mu \in \N\:.
\end{equation}
\begin{remark}
\begin{itemize}
\item[(1)] From (\ref{anker_series_expansion}) we get the following pointwise estimate for any $c>0$,
\begin{equation} \label{pointwise_phi_lambda}
|\varphi_\lambda(s)| \lesssim |{\bf c}(\lambda) | e^{-\rho s},\:\:\:\:\:\:\:\:\lambda\in\R\setminus \{0\},\:\:\:\:s>c.
\end{equation}
\item[(2)] For the degenerate case of $\mathbb H^n$, the relevant preliminaries and the series expansions of $\varphi_{\lambda}$ can be found in \cite{Anker, ST, AP}.
\end{itemize}
\end{remark}
We will also have the occasion to talk about Lorentz spaces. In this regard, the space $L^{2,\infty}(\mathcal S)$ is defined to be the collection of measurable functions $f$ defined on $\mathcal S$ such that
\begin{equation}
\|f\|_{L^{2,\infty}(\mathcal S)}:= \displaystyle\sup_{t>0} t\:d_f(t)^{1/2}<\infty\:,
\end{equation}
with $d_f(t)$ denoting the distribution function of $f$ and is given by the left Haar measure of the set $\displaystyle{\{x\in \mathcal S \mid |f(x)| >t\}}$.

\section{Two auxiliary lemmata}
In this section, we prove two auxiliary lemmata. The first one is a one-one correspondence between radial Schwartz class functions on $\mathcal S$ and $\R^n$ whose spherical Fourier transforms are supported away from the origin.
In this regard, we define
\begin{equation*}
\mathscr{S}(\R)^\infty_{e}:=\{g\in \mathscr{S}(\R)_{e}\mid 0\notin \text{Supp}({g})\}.
\end{equation*}
So, $\mathscr{S}(\R)^\infty_{e}$ is the collection of all even Schwartz class functions on $\R$ which are supported outside an interval containing zero. Then we consider
\begin{equation*}
\mathscr{S}(\R^n)^\infty_{o} := \mathscr{F}^{-1}\left(\mathscr{S}(\R)^\infty_{e}\right)\:,\:\:\:\:\:\: \mathscr{S}^2(\mathcal S)^\infty_{o} := \wedge^{-1}\left(\mathscr{S}(\R)^\infty_{e}\right)\:.
\end{equation*}
We now present the following one-one correspondence:
\begin{lemma} \label{schwartz_correspondence}
For each $f \in \mathscr{S}^2(\mathcal S)^\infty_{o}$, there exists a unique $g \in \mathscr{S}(\R^n)^\infty_{o}$ (and conversely) such that for all $\lambda \in \R^+$
\begin{equation*}
\lambda^{n-1} \mathscr{F}g(\lambda) = {|{\bf c}(\lambda)|}^{-2} \widehat{f}(\lambda)\:.
\end{equation*}

\end{lemma}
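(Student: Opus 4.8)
The plan is to obtain the correspondence $f \leftrightarrow g$ by transporting, through the two Fourier-transform bijections $\wedge \colon \mathscr{S}^2(\mathcal S)_{o} \to \mathscr{S}(\R)_{e}$ and $\mathscr{F} \colon \mathscr{S}(\R^n)_{o} \to \mathscr{S}(\R)_{e}$, the operation of pointwise multiplication by the symbol
\begin{equation*}
m(\lambda):=\lambda^{-(n-1)}\,{|{\bf c}(\lambda)|}^{-2},\qquad \lambda\in\R^{+}.
\end{equation*}
Concretely, given $f\in\mathscr{S}^2(\mathcal S)^\infty_{o}$, its spherical Fourier transform $\widehat{f}$ lies in $\mathscr{S}(\R)^\infty_{e}$, so $\text{Supp}(\widehat{f})\subseteq\{|\lambda|\ge\delta\}$ for some $\delta>0$. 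I would put $h(\lambda):=m(\lambda)\widehat{f}(\lambda)$ for $\lambda>0$, extend $h$ to an even function on $\R$ (it vanishes near $0$), and define $g:=\mathscr{F}^{-1}(h)$. Once we know $h\in\mathscr{S}(\R)^\infty_{e}$, it follows that $g\in\mathscr{S}(\R^n)^\infty_{o}$ by the very definition of that space, and $\mathscr{F}g=h$ yields $\lambda^{n-1}\mathscr{F}g(\lambda)={|{\bf c}(\lambda)|}^{-2}\widehat{f}(\lambda)$ for $\lambda>0$, as wanted. The reverse construction is symmetric: starting from $g$, one forms $k(\lambda):=\lambda^{n-1}{|{\bf c}(\lambda)|}^{2}\mathscr{F}g(\lambda)$ on $\R^{+}$, extends it evenly, and sets $f:=\wedge^{-1}(k)$.

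Everything thus reduces to one claim: if $\phi\in\mathscr{S}(\R)_{e}$ is supported in $\{|\lambda|\ge\delta\}$, then the even extensions of $m\phi$ and $m^{-1}\phi$ again belong to $\mathscr{S}(\R)^\infty_{e}$. The support statement is obvious, and the even extension of a function on $(0,\infty)$ vanishing on $(0,\delta)$ is automatically smooth on $\R$ (it is identically $0$ near the origin and a smooth function of $\pm\lambda$ elsewhere), so the only substantial point is that $m$ and $m^{-1}$ are smooth on $[\delta,\infty)$ with every derivative bounded there by a polynomial in $\lambda$; the Leibniz rule then keeps $m\phi$ and $m^{-1}\phi$ rapidly decreasing. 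The factor $\lambda^{\pm(n-1)}$ and its derivatives are harmless on $[\delta,\infty)$. For ${|{\bf c}(\lambda)|}^{-2}$, the explicit formula (\ref{hc}) exhibits ${\bf c}(\lambda)^{-1}$, and hence ${|{\bf c}(\lambda)|}^{-2}={\bf c}(\lambda)^{-1}{\bf c}(-\lambda)^{-1}$, as a product of functions real-analytic on $\R$ and zero-free on $\R\setminus\{0\}$ --- the only possible obstruction, the pole of $\Gamma(2i\lambda)$, sits at $\lambda=0$, while the poles of the remaining Gamma factors lie off the real axis --- so ${|{\bf c}(\lambda)|}^{-2}$ is smooth and strictly positive on $[\delta,\infty)$, and (\ref{c-fn_derivative_estimates}) supplies the polynomial bounds on all of its derivatives there. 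Finally ${|{\bf c}(\lambda)|}^{2}=1/{|{\bf c}(\lambda)|}^{-2}$, and on $[\delta,\infty)$ the denominator is smooth, bounded below by a positive constant (clear from (\ref{plancherel_measure})), and has polynomially bounded derivatives; differentiating the reciprocal of such a function again yields polynomially bounded derivatives, which completes the claim.

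Granting the claim, existence follows as above, and uniqueness is a one-line check: if $f_1,f_2\in\mathscr{S}^2(\mathcal S)^\infty_{o}$ both correspond to $g$, then ${|{\bf c}(\lambda)|}^{-2}\widehat{f_1}(\lambda)={|{\bf c}(\lambda)|}^{-2}\widehat{f_2}(\lambda)$ for $\lambda>0$, so $\widehat{f_1}=\widehat{f_2}$ on $\R^{+}$ and then on all of $\R$ by evenness, whence $f_1=f_2$ by injectivity of $\wedge$; dividing instead by $\lambda^{n-1}$ pins down $g$ on the Euclidean side in the same way. In essence the lemma asserts that pointwise multiplication by $m$ is a self-bijection of $\mathscr{S}(\R)^\infty_{e}$, read through the two Fourier transforms; the one step that genuinely uses the structure of $\mathcal S$, and hence the main point to get right, is the verification that ${|{\bf c}(\lambda)|}^{\pm2}$ is a symbol of polynomial type away from the origin.
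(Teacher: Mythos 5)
Your proposal is correct and takes essentially the same route as the paper: both reduce the lemma to the observation that pointwise multiplication by $\lambda^{-(n-1)}{|{\bf c}(\lambda)|}^{-2}$ is a self-bijection of $\mathscr{S}(\R)^\infty_{e}$, and then transport this through the spherical and Euclidean Fourier isomorphisms (the paper phrases this via the composed Abel transform $\mathscr{A}=\mathscr{A}^{-1}_{\R^n,\R}\circ\mathscr{A}_{\mathcal S,\R}$ and a conjugated multiplier $\mathcal{M}$, which unwinds to exactly your $g=\mathscr{F}^{-1}(m\,\widehat{f})$). You supply somewhat more detail than the paper on why ${|{\bf c}(\lambda)|}^{\pm 2}$ behaves as a polynomial-type symbol on $[\delta,\infty)$, which is a welcome addition since the paper only gestures at (\ref{c-fn_derivative_estimates}).
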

\begin{proof}
By the properties of the Abel transform described before, we have the following commutative diagram,
\[
\begin{tikzcd}[row sep=1.4cm,column sep=1.4cm]
\mathscr{S}^2(\mathcal S)_{o}\arrow[r,"\mathscr{A}_{\mathcal S,\R}"] \arrow[dr,swap,"\wedge"] & {\mathscr{S}(\R)}_{e}
\arrow[d,"\sim"] &
\arrow[l,"\mathscr{A}_{\R^n,\R}",swap]  \mathscr{S}(\R^n)_{o} \arrow[dl,"\mathscr{F}"] \\
&  {\mathscr{S}(\R)}_{e}&
\end{tikzcd}
\]
where $\mathscr{A}_{\R^n,\R}$ is the Abel transform defined from $\mathscr{S}(\R^n)_{o}$ to ${\mathscr{S}(\R)}_{e}$. As all the maps above are topological isomorphisms, by defining
\begin{equation*}
\mathscr{A}:=\mathscr{A}^{-1}_{\R^n,\R} \circ \mathscr{A}_{\mathcal S,\R},
\end{equation*}
we reduce matters to the following simplified commutative diagram:
\[
\begin{tikzcd}[row sep=1.4cm,column sep=1.4cm]
\mathscr{S}^2(\mathcal S)_{o}\arrow[r,"\mathscr{A}"] \arrow[dr,swap,"\wedge"] & \mathscr{S}(\R^n)_{o}
\arrow[d,"\mathscr{F}"]   \\
&  {\mathscr{S}(\R)}_{e}&
\end{tikzcd}
\]
Then by the definition of the spaces $\mathscr{S}(\R)^\infty_{e},\:\mathscr{S}(\R^n)^\infty_{o}$ and $\mathscr{S}^2(\mathcal S)^\infty_{o}$, the above commutative diagram descends to the following, where every arrow is again a bijection:
\[
\begin{tikzcd}[row sep=1.4cm,column sep=1.4cm]
\mathscr{S}^2(\mathcal S)^\infty_{o}\arrow[r,"\mathscr{A}"] \arrow[dr,swap,"\wedge"] & \mathscr{S}(\R^n)^\infty_{o}
\arrow[d,"\mathscr{F}"]   \\
&  {\mathscr{S}(\R)}^\infty_{e}&
\end{tikzcd}
\]
Consequently, for $f \in \mathscr{S}^2(\mathcal S)^\infty_{o}$, there exists a unique $\mathscr{A}f \in \mathscr{S}(\R^n)^\infty_{o}$ (and conversely) such that
\begin{equation} \label{ball_pf_eq3}
\widehat{f}(\lambda)= \mathscr{F}(\mathscr{A}f)(\lambda)\:.
\end{equation}
Next let us define a map $\mathfrak{m}: \mathscr{S}(\R)^\infty_{e}\to \mathscr{S}(\R)^\infty_{e}$, by the formula
\begin{equation*}
\mathfrak{m}(\kappa)(\lambda):= \frac{{|{\bf c}(\lambda)|}^{-2}}{\lambda^{n-1}} \kappa(\lambda)\:,\:\:\:\:\:\:\:\:\:\kappa\in\mathscr{S}(\R)^\infty_{e}\:,\:\:\:\:\lambda\in \R^+\:.
\end{equation*}
Because of the derivative estimates of ${|{\bf c}(\lambda)|}^{-2}$ (see (\ref{c-fn_derivative_estimates})), the map $\mathfrak m$ is a well defined bijection with the inverse given by,
\begin{equation*}
\mathfrak{m}^{-1}(\kappa)(\lambda):= \frac{\lambda^{n-1}}{{|{\bf c}(\lambda)|}^{-2}} \kappa(\lambda).
\end{equation*}
As $\mathscr{F}$ is a bijection between $\mathscr{S}(\R^n)^\infty_{o}$ and $\mathscr{S}(\R)^\infty_{e}$, we get an induced map $\mathcal{M}$ obtained by conjugating $\mathfrak{m}$ with $\mathscr{F}$, which is a bijection from $\mathscr{S}(\R^n)^\infty_{o}$ onto itself:
\[
\begin{tikzcd}[row sep=1.4cm,column sep=1.4cm]
\mathscr{S}(\R^n)^\infty_{o} \arrow[r,"\mathcal{M}"] \arrow[d,swap,"\mathscr{F}"] & \mathscr{S}(\R^n)^\infty_{o}
\arrow[d,"\mathscr{F}"]   \\
{\mathscr{S}(\R)}^\infty_{e}  \arrow[r,"\mathfrak{m}"]  &  {\mathscr{S}(\R)}^\infty_{e}&
\end{tikzcd}
\]
Thus by (\ref{ball_pf_eq3}), we get the unique choice $\mathcal{M}(\mathscr{A}f) \in \mathscr{S}(\R^n)^\infty_{o}$ such that
\begin{equation} \label{ball_pf_eq4}
\mathscr{F}(\mathcal{M}(\mathscr{A}f))(\lambda)= \frac{{|{\bf c}(\lambda)|}^{-2}}{\lambda^{n-1}}  \mathscr{F}(\mathscr{A}f)(\lambda)= \frac{{|{\bf c}(\lambda)|}^{-2}}{\lambda^{n-1}} \widehat{f}(\lambda)\:.
\end{equation}
\end{proof}
We now turn to an oscillatory integral estimate which will be crucial for the proof of Theorem \ref{theorem}. Here $R_0$ is the positive constant as in Lemma \ref{bessel_series_expansion}.
\begin{lemma} \label{oscillatory_integral_estimate}
Let $t: \R^+ \to (0,1/\rho^2)$ be a measurable function and $R > R_0$. Then for all $s,s' \in (R_0,R)$,
\begin{equation*}
\left|\bigintssss_{\frac{1}{R_0}}^\infty \frac{e^{i\left\{\lambda (s'-s) + (t(s')-t(s))\left(\lambda^2 + \rho^2\right)\right\}}}{{\left(\lambda^2 + \rho^2\right)}^{1/4}} d\lambda \right| \lesssim \left(\frac{1}{{|s-s'|}^{1/2}} +1\right)\:,
\end{equation*}
where the implicit constant depends only on $m_\mv,m_\z,R_0$ and $R$.
\end{lemma}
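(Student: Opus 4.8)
The plan is to treat this as a one-dimensional oscillatory integral with quadratic phase and run a stationary-phase / van der Corput analysis. Writing $a:=s'-s$ and $b:=t(s')-t(s)$, one has $|a|<R$ and $|b|<1/\rho^2$, and with $c:=1/R_0$, $\phi(\lambda):=a\lambda+b(\lambda^2+\rho^2)$, $\psi(\lambda):=(\lambda^2+\rho^2)^{-1/4}$ the claim becomes $\big|\int_c^\infty\psi(\lambda)e^{i\phi(\lambda)}\,d\lambda\big|\lesssim |a|^{-1/2}+1$. First I would dispose of trivialities: if $a=0$ the right-hand side is infinite and nothing is to be proved, so one may assume $a\ne 0$; the summand $b\rho^2$ in $\phi$ contributes only a unimodular factor and may be dropped; and replacing $\phi$ by $-\phi$ (i.e.\ conjugating) one may assume $b\ge 0$. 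I would then record the facts used constantly: $\psi$ is smooth, positive and strictly decreasing on $[c,\infty)$ with $\psi(\lambda)\le\min\{\rho^{-1/2},\lambda^{-1/2}\}$ and $\int_\mu^\infty|\psi'|=\psi(\mu)$ for $\mu\ge c$, while $\phi''\equiv 2b$ and, when $b>0$, $\phi$ has a single critical point $\lambda_0:=-a/(2b)$ obeying the decisive identity $2b\lambda_0=|a|$.

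Two estimates do all the work: (i) the amplitude form of van der Corput's second-derivative test --- since $|\phi''|\equiv 2b$, on every subinterval $J$ one has $\big|\int_J\psi e^{i\phi}\big|\lesssim b^{-1/2}\,\psi(\inf J)$, using monotonicity of $\psi$; and (ii) integration by parts on an interval on which $\phi'$ does not vanish, which controls $\big|\int_J\psi e^{i\phi}\big|$ by $\psi/|\phi'|$ at the endpoints plus $\int_J\big(|\psi'|/|\phi'|+2b\psi/(\phi')^2\big)$. The backbone is the splitting at the radius $\Lambda:=\max\{c,\,1/|a|\}$. On $[c,\Lambda]$ the trivial bound already suffices: $\int_c^\Lambda\psi\le\int_c^\Lambda\lambda^{-1/2}\,d\lambda\le 2\Lambda^{1/2}=2\max\{c^{1/2},|a|^{-1/2}\}\lesssim 1+|a|^{-1/2}$. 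It remains to bound $\int_\Lambda^\infty\psi e^{i\phi}$. If $b=0$, or if $b>0$ and $a>0$, then $\phi'$ is monotone on $[\Lambda,\infty)$ with $|\phi'|\ge|a|$ there, so (ii) applies to the whole tail and every resulting term is $\lesssim\psi(\Lambda)/|a|\le\Lambda^{-1/2}/|a|\le|a|^{-1/2}$, using $\Lambda\ge 1/|a|$. The substantive case is $b>0$, $a<0$, so that $\lambda_0=|a|/(2b)>0$, and I would split according to the position of $\lambda_0$. If $\lambda_0\le\Lambda$ there is no critical point in the tail: handle $[\Lambda,2\Lambda]$ by the trivial bound ($\lesssim\Lambda^{1/2}$), and $[2\Lambda,\infty)$ by (ii), where $\phi'(\lambda)\ge b\lambda$ and, using $b\ge|a|/(2\Lambda)$ together with $|a|\Lambda\ge 1$, all terms come out $\lesssim\Lambda^{1/2}\lesssim 1+|a|^{-1/2}$. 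If $\lambda_0>\Lambda$ there is a genuine stationary point, and I would work with the window $W:=[\max\{\Lambda,\lambda_0/2\},\,2\lambda_0]$: on $W$, estimate (i) gives $\lesssim b^{-1/2}\psi(\inf W)\le b^{-1/2}(\lambda_0/2)^{-1/2}=\sqrt 2\,(b\lambda_0)^{-1/2}=2|a|^{-1/2}$, using $\inf W\ge\lambda_0/2$ and $2b\lambda_0=|a|$; on $[2\lambda_0,\infty)$ one has $\phi'(\lambda)\ge b\lambda$ and $\phi'(2\lambda_0)=|a|$, so (ii) gives $\lesssim\psi(2\lambda_0)/|a|\le(2\lambda_0)^{-1/2}/|a|\lesssim|a|^{-1/2}$ since $\lambda_0\ge\Lambda\ge 1/|a|$; and if moreover $\lambda_0>2\Lambda$ there is the extra piece $[\Lambda,\lambda_0/2]$, on which $|\phi'|\ge b\lambda_0=|a|/2$, so (ii) yields terms $\lesssim\psi(\Lambda)/|a|+\psi(\lambda_0/2)/|a|\lesssim|a|^{-1/2}$ via $\psi(\Lambda)\le\Lambda^{-1/2}\le|a|^{1/2}$ and $\psi(\lambda_0/2)\le(\lambda_0/2)^{-1/2}$ with $\lambda_0>2/|a|$.

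The one genuinely delicate point --- and the reason for this particular splitting radius --- is the window $W$ about the stationary point: there van der Corput supplies only the factor $b^{-1/2}$, which is unbounded as $b\to 0$, and reconciling it with the target $|a|^{-1/2}$ forces one to combine (a) the identity $2b\lambda_0=|a|$, which converts $b^{-1/2}\lambda_0^{-1/2}$ into $\sqrt2\,|a|^{-1/2}$, with (b) the choice $\Lambda=\max\{c,1/|a|\}$, which is exactly large enough that $\int_c^{1/|a|}\lambda^{-1/2}\,d\lambda\asymp|a|^{-1/2}$ and small enough that the amplitude $\psi$ at the left endpoint of the tail is already $\lesssim|a|^{1/2}$ --- precisely what renders the integration-by-parts boundary terms admissible. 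Everything else is routine bookkeeping over the finitely many positions of $\lambda_0$ relative to $\Lambda$, and the hypotheses $|b|<1/\rho^2$ and $|a|<R$ enter only to force all implicit constants to depend solely on $\rho$ (hence on $m_\mv,m_\z$), on $R_0$, and on $R$.
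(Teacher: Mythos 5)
Your proof is correct, and it takes a genuinely different route from the paper's. The paper does not re-derive the oscillatory estimate from scratch: it cites \cite[Lemma 3.1]{Dewan}, which already provides the bound
\[
\left|\int_{\max\{B(n)/s,\,B(n)/s'\}}^{\infty}\frac{e^{i\{\lambda(s'-s)+(t(s')-t(s))(\lambda^2+\rho^2)\}}}{(\lambda^2+\rho^2)^{1/4}}\,d\lambda\right|\lesssim\frac{1}{|s-s'|^{1/2}}
\]
for some dimension-dependent constant $B(n)>R_0$, and then simply reconciles the discrepancy between the lower limit $\max\{B(n)/s,B(n)/s'\}$ (which lies in the fixed compact window $(B(n)/R,\,B(n)/R_0)$) and the desired lower limit $1/R_0$ by bounding the intermediate segment trivially via $|(\lambda^2+\rho^2)^{-1/4}|\le\lambda^{-1/2}$; the result is the extra additive $+1$. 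Your argument, by contrast, is a complete, self-contained stationary-phase analysis: you normalize $a=s'-s$, $b=t(s')-t(s)$, split at $\Lambda=\max\{1/R_0,\,1/|a|\}$, and on the tail run a case analysis on the location of the critical point $\lambda_0=-a/(2b)$, combining the van der Corput second-derivative test on the window about $\lambda_0$ (where the identity $2b\lambda_0=|a|$ converts the $b^{-1/2}$ loss into the target $|a|^{-1/2}$) with integration by parts away from it. The paper's approach buys brevity at the cost of referring out to prior work; yours buys transparency and self-containment at the cost of a longer case analysis, but every step checks out, including the delicate balance at the splitting radius $\Lambda$ that makes both the near-origin trivial bound and the far-field boundary terms come out $\lesssim|a|^{-1/2}+1$, with all implicit constants depending only on $\rho$ (hence $m_\mv,m_\z$), $R_0$, and $R$, as required.
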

\begin{proof}
By \cite[Lemma 3.1]{Dewan}, there exists a constant $B(n)>R_0$, depending only on the dimension $n$ such that for all $s,s' \in (R_0,R)$,
\begin{equation} \label{oscillatory_pf_eq1}
\left|\bigintssss_{\max\left\{\frac{B(n)}{s},\frac{B(n)}{s'}\right\}}^\infty \frac{e^{i\left\{\lambda(s'-s)+(t(s')-t(s))\left(\lambda^2 + \rho^2\right)\right\}}}{{\left(\lambda^2 + \rho^2\right)}^{1/4}} d\lambda \right| \lesssim \frac{1}{{|s-s'|}^{1/2}}\:,
\end{equation}
with the implicit constant depending only on $m_\mv,m_\z$ and $R$. Now as $s,s' \in (R_0,R)$, it follows that
\begin{equation*}
\frac{B(n)}{R} < \max\left\{\frac{B(n)}{s},\frac{B(n)}{s'} \right\} < \frac{B(n)}{R_0}\:.
\end{equation*}
Thus if $\max\left\{\frac{B(n)}{s},\frac{B(n)}{s'} \right\} \le \frac{1}{R_0}$, then
\begin{eqnarray} \label{oscillatory_pf_eq2}
\left|\bigintssss_{\max\left\{\frac{B(n)}{s},\frac{B(n)}{s'}\right\}}^{\frac{1}{R_0}} \frac{e^{i\left\{\lambda(s'-s)+(t(s')-t(s))\left(\lambda^2 + \rho^2\right)\right\}}}{{\left(\lambda^2 + \rho^2\right)}^{1/4}} d\lambda \right| &\le & \bigintssss_{\max\left\{\frac{B(n)}{s},\frac{B(n)}{s'}\right\}}^{\frac{1}{R_0}} \frac{d\lambda}{\lambda^{1/2}} \nonumber \\
& \le & \bigintssss_{\frac{B(n)}{R}}^{\frac{1}{R_0}} \frac{d\lambda}{\lambda^{1/2}} \:.
\end{eqnarray}
Thus it is justified to decompose the following integral as
\begin{eqnarray*}
&&\bigintssss_{\frac{1}{R_0}}^\infty \frac{e^{i\left\{\lambda(s'-s)+(t(s')-t(s))\left(\lambda^2 + \rho^2\right)\right\}}}{{\left(\lambda^2 + \rho^2\right)}^{1/4}} d\lambda \\
&=& \bigintssss_{\max\left\{\frac{B(n)}{s},\frac{B(n)}{s'}\right\}}^\infty \frac{e^{i\left\{\lambda(s'-s)+(t(s')-t(s))\left(\lambda^2 + \rho^2\right)\right\}}}{{\left(\lambda^2 + \rho^2\right)}^{1/4}} d\lambda \\
&& -  \bigintssss_{\max\left\{\frac{B(n)}{s},\frac{B(n)}{s'}\right\}}^{\frac{1}{R_0}} \frac{e^{i\left\{\lambda(s'-s)+(t(s')-t(s))\left(\lambda^2 + \rho^2\right)\right\}}}{{\left(\lambda^2 + \rho^2\right)}^{1/4}} d\lambda \:.
\end{eqnarray*}
Hence by (\ref{oscillatory_pf_eq1}) and (\ref{oscillatory_pf_eq2}), we get that
\begin{eqnarray*}
&&\left|\bigintssss_{\frac{1}{R_0}}^\infty \frac{e^{i\left\{\lambda(s'-s)+(t(s')-t(s))\left(\lambda^2 + \rho^2\right)\right\}}}{{\left(\lambda^2 + \rho^2\right)}^{1/4}} d\lambda\right| \\
&\le & \left|\bigintssss_{\max\left\{\frac{B(n)}{s},\frac{B(n)}{s'}\right\}}^\infty \frac{e^{i\left\{\lambda(s'-s)+(t(s')-t(s))\left(\lambda^2 + \rho^2\right)\right\}}}{{\left(\lambda^2 + \rho^2\right)}^{1/4}} d\lambda \right| \\
&& +  \left|\bigintssss_{\max\left\{\frac{B(n)}{s},\frac{B(n)}{s'}\right\}}^{\frac{1}{R_0}} \frac{e^{i\left\{\lambda(s'-s)+(t(s')-t(s))\left(\lambda^2 + \rho^2\right)\right\}}}{{\left(\lambda^2 + \rho^2\right)}^{1/4}} d\lambda \right| \\
& \lesssim & \left(\frac{1}{{|s-s'|}^{1/2}} +1\right)\:.
\end{eqnarray*}
The case when $\max\left\{\frac{B(n)}{s},\frac{B(n)}{s'} \right\} > \frac{1}{R_0}$ can be argued similarly.
\end{proof}

\section{Boundedness of the maximal function}
In this section, we prove the sufficient conditions of Theorem \ref{theorem}. In order to study the boundedness of the maximal function (\ref{maximal_fn_defn}), it suffices to consider the linearized maximal function,
\begin{equation} \label{linearization}
Tf(s):=S_{t(s)}f(s)= \int_{0}^\infty \varphi_\lambda(s)\:e^{it(s)\left(\lambda^2 + \rho^2\right)}\:\widehat{f}(\lambda)\: {|{\bf c}(\lambda)|}^{-2}\: d\lambda,\:\:\:\:s\in \R^+,
\end{equation}
and prove estimates of the form
\begin{equation} \label{linearized_maximal_estimate}
{\|Tf\|}_{L^q\left(B_R\right)} \lesssim {\|f\|}_{H^{\alpha}(\mathcal S)}\:,
\end{equation}
where $t(\cdot): \mathcal S \to (0,1/\rho^2)$ is a radial measurable function on $\mathcal S$.

We first work out the cases when $\widehat{f}$
is either supported near the origin or away from the origin, in subsections $4.1$ and $4.2$ respectively. In subsection $4.3$, we give the proof for the general case.

\subsection{Initial condition with spherical Fourier transform having small frequencies}
In this subsection, we work under the assumption that $\text{Supp}(\widehat{f}) \subset [0, \Lambda)$ for some $\Lambda \in \R^+$. In this case we have:
\begin{lemma} \label{small_freq_lemma}
(\ref{linearized_maximal_estimate}) holds for all $\alpha\in [0,\infty)$ and all $q\in [1,\infty]$.
\end{lemma}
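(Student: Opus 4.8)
The plan is to show that when $\widehat{f}$ is supported in $[0,\Lambda)$, the linearized operator $Tf$ is actually bounded pointwise on $B_R$ by a constant multiple of $\|f\|_{L^2(\mathcal S)}$, which in turn is dominated by $\|f\|_{H^\alpha(\mathcal S)}$ for every $\alpha\ge 0$; since $B_R$ has finite measure this immediately yields \eqref{linearized_maximal_estimate} for all $q\in[1,\infty]$. Concretely, I would start from the definition \eqref{linearization} and estimate
\begin{equation*}
|Tf(s)| \le \int_0^\Lambda |\varphi_\lambda(s)|\,|\widehat{f}(\lambda)|\,{|{\bf c}(\lambda)|}^{-2}\,d\lambda \le \int_0^\Lambda |\widehat{f}(\lambda)|\,{|{\bf c}(\lambda)|}^{-2}\,d\lambda,
\end{equation*}
using the uniform bound $|\varphi_\lambda(s)|\le 1$ from \eqref{phi_lambda_bound} (so the cutoff in $t$ and the particular measurable function $t(s)$ play no role here).

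The next step is a Cauchy--Schwarz argument adapted to the Plancherel measure. Write the last integrand as $\bigl(|\widehat f(\lambda)|\,(\lambda^2+\rho^2)^{\alpha/2}{|{\bf c}(\lambda)|}^{-1}\bigr)\cdot\bigl((\lambda^2+\rho^2)^{-\alpha/2}{|{\bf c}(\lambda)|}^{-1}\bigr)$ and apply Cauchy--Schwarz on $(0,\Lambda)$:
\begin{equation*}
|Tf(s)| \le \|f\|_{H^\alpha(\mathcal S)}\left(\int_0^\Lambda (\lambda^2+\rho^2)^{-\alpha}{|{\bf c}(\lambda)|}^{-2}\,d\lambda\right)^{1/2}.
\end{equation*}
The remaining factor is finite: by \eqref{plancherel_measure}, ${|{\bf c}(\lambda)|}^{-2}\asymp \lambda^2(1+|\lambda|)^{n-3}$ is continuous and bounded on the compact interval $[0,\Lambda]$, and $(\lambda^2+\rho^2)^{-\alpha}\le \rho^{-2\alpha}$ is bounded there as well, so the integral is at most $C(\Lambda,n,\alpha,\rho)$. (For $\alpha=0$ this is just the observation that $\|f\|_{H^0}=\|f\|_{L^2}$.) Hence $|Tf(s)|\lesssim_{\Lambda} \|f\|_{H^\alpha(\mathcal S)}$ uniformly in $s$, and integrating the $q$-th power over $B_R$ (using $|B_R|<\infty$, with the trivial modification for $q=\infty$) gives \eqref{linearized_maximal_estimate}.

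Since the argument is uniform over all measurable $t(\cdot):\mathcal S\to(0,1/\rho^2)$, passing to the supremum over $t$ recovers the bound for $S^*f$ itself. There is essentially no obstacle here: the only point requiring a word of care is the integrability of $(\lambda^2+\rho^2)^{-\alpha}{|{\bf c}(\lambda)|}^{-2}$ near $\lambda=0$, but because $\rho>0$ the weight $(\lambda^2+\rho^2)^{-\alpha}$ is harmless at the origin (in contrast to the Euclidean case, where one would need the vanishing of ${|{\bf c}(\lambda)|}^{-2}\asymp\lambda^{n-1}$ at $\lambda=0$ to kill a singularity), so no cancellation or oscillation is needed in this regime. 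The genuinely delicate estimates — involving Lemma \ref{oscillatory_integral_estimate} and the series expansions of $\varphi_\lambda$ — are reserved for the complementary case of high frequencies treated in the next subsection.
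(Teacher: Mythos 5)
Your proof is correct and is essentially the same as the paper's: bound $|\varphi_\lambda|\le 1$, apply Cauchy--Schwarz, use that the Plancherel density is bounded on the compact support $[0,\Lambda]$, and conclude via the $L^\infty(B_R)$ bound. The only cosmetic difference is that the paper first derives an $H^0=L^2$ bound and implicitly uses $\|f\|_{H^0}\lesssim\|f\|_{H^\alpha}$, whereas you fold the $(\lambda^2+\rho^2)^{\alpha/2}$ weight directly into the Cauchy--Schwarz split — the substance is identical.
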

\begin{proof}
The proof follows from the boundedness of $\varphi_\lambda$, an application of Cauchy-Schwarz inequality and the small frequency asymptotics of the density of the Planchrel measure given in (\ref{plancherel_measure}). Indeed, for any $0 < s < R$,
\begin{eqnarray*}
|Tf(s)| & \le & \int_{0}^\Lambda \left|\widehat{f}(\lambda)\right|\: {|{\bf c}(\lambda)|}^{-2}\: d\lambda \\
& \le & {\left(\int_0^\Lambda {|\widehat{f}(\lambda)|}^2 {|{\bf c}(\lambda)|}^{-2} d\lambda\right)}^{1/2} {\left(\int_0^\Lambda  {|{\bf c}(\lambda)|}^{-2} d\lambda\right)}^{1/2} \\
& \lesssim & {\|f\|}_{H^0(\mathcal S)} {\left(\int_0^\Lambda  \lambda^2 d\lambda\right)}^{1/2} \\
&=& \frac{\Lambda^{3/2}}{\sqrt{3}}\: {\|f\|}_{H^0(\mathcal S)}\:.
\end{eqnarray*}
So we get the $L^\infty$ estimate on $B_R$ and hence the $L^q$ estimates for all $q$.
\end{proof}

\subsection{Initial condition with spherical Fourier transform having large frequencies}
In this subsection, we work under the standing assumption that $\text{Supp}(\widehat{f}) \subset [\frac{1}{R_0},\infty)$, where $R_0$ is as in Lemma \ref{bessel_series_expansion}. In this case, we have:
\begin{lemma} \label{large_freq_lemma}
\begin{itemize}
\item[(i)] If $1/4 \le \alpha < n/2$, then (\ref{linearized_maximal_estimate}) holds if   $1\leq q \le 2n/(n-2\alpha)$.
\item[(ii)] If $\alpha = n/2$, then (\ref{linearized_maximal_estimate}) holds if $q\in [1, \infty)$.
\item[(iii)] If $\alpha > n/2$, then (\ref{linearized_maximal_estimate}) holds for all $q\in [1,\infty]$.
\end{itemize}
\end{lemma}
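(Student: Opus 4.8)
The strategy is to transfer the problem to $\R^n$ using the Schwartz correspondence of Lemma \ref{schwartz_correspondence}, and then invoke Sj\"olin's Euclidean result, with the extra geometric input coming from the two series expansions of $\varphi_\lambda$ and the Plancherel dichotomy \eqref{plancherel_measure}. Concretely, since $\text{Supp}(\widehat f)\subset[1/R_0,\infty)$ we have $f\in\mathscr S^2(\mathcal S)^\infty_o$, so by Lemma \ref{schwartz_correspondence} there is a unique $g\in\mathscr S(\R^n)^\infty_o$ with $\lambda^{n-1}\mathscr Fg(\lambda)={|{\bf c}(\lambda)|}^{-2}\widehat f(\lambda)$ for $\lambda>0$. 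First I would rewrite the linearized maximal function $Tf(s)$ by plugging this relation into \eqref{linearization}, so that
\[
Tf(s)=\int_0^\infty \varphi_\lambda(s)\,e^{it(s)(\lambda^2+\rho^2)}\,\mathscr Fg(\lambda)\,\lambda^{n-1}\,d\lambda,
\]
which should be compared with the Euclidean Schr\"odinger expression $\widetilde S_t g$ in \eqref{Rn prop}. The point is that on the ball $B_R$ (with $s\in(0,R)$) one should split the $\lambda$-integral into a bounded piece $\lambda\in[1/R_0,1/R_0']$ or so, handled trivially as in Lemma \ref{small_freq_lemma}, and the tail $\lambda$ large, where $|{\bf c}(\lambda)|^{-2}\asymp\lambda^{n-1}$ and the two multipliers $|{\bf c}(\lambda)|^{-2}$ and $\lambda^{n-1}$ are comparable up to symbols.

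The main work is to compare $\varphi_\lambda(s)$ with the Euclidean kernel $\J_{(n-2)/2}(\lambda s)$. Here I would use the Bessel series expansion of Lemma \ref{bessel_series_expansion}: for $s\le R_0$ we have, to leading order, $\varphi_\lambda(s)=c_0\big(s^{n-1}/A(s)\big)^{1/2}\J_{(n-2)/2}(\lambda s)+(\text{lower order in }s^2)+E_{M+1}(\lambda,s)$. The leading term produces, after absorbing the smooth factor $\big(s^{n-1}/A(s)\big)^{1/2}$ (which is bounded above and below on $[0,R_0]$), exactly an expression of the form $\widetilde S_{t(s)} g_l(s)$ for suitable radial Schwartz functions $g_l$ on $\R^n$ (respectively on $\R^{n+2l}$ for the $l$-th term, using $\J_{(n-2)/2+l}$), whose $H^\alpha$ norms are controlled by $\|f\|_{H^\alpha(\mathcal S)}$ via the Plancherel/Sobolev correspondence and Pitt's inequality. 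To these one applies the Euclidean maximal estimate \eqref{euclid2} of \cite[Theorem 3]{Sjolin2} in the relevant range: $q\le 2n/(n-2\alpha)$ for $1/4\le\alpha<n/2$, $q<\infty$ for $\alpha=n/2$, and all $q$ for $\alpha>n/2$. The remainder terms, when $M$ is taken large enough (depending on $\alpha$, $q$, $R$), contribute an $L^\infty(B_R)$-bounded operator by the decay estimate on $E_{M+1}$ combined with Cauchy--Schwarz against $|{\bf c}(\lambda)|^{-2}d\lambda$ and the fact that $|E_{M+1}(\lambda,s)|\lesssim s^{2(M+1)}(\lambda s)^{-((n-1)/2+M+1)}$ forces enough decay in $\lambda$; one also uses the oscillatory integral estimate Lemma \ref{oscillatory_integral_estimate} to handle the part of the kernel that is not covered by the Bessel expansion, i.e. to get the needed $L^1(B_R)$ bound when $\alpha=1/4$ exactly (the endpoint), by exploiting the gain $|s-s'|^{-1/2}$ and integrability of $|s-s'|^{-1/2}$ on the bounded interval $(R_0,R)$.

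The hardest step, I expect, is the bookkeeping that makes the transfer to $\R^n$ rigorous: one must (a) check that the Schwartz functions $g_l$ built from the terms of the Bessel series lie in the right $\mathscr S(\R^{n+2l})^\infty_o$ with norms summable in $l$ — this needs the coefficient bounds $|a_l(s)|\le C(4R_1)^{-l}$ and the absolute convergence statement of Lemma \ref{bessel_series_expansion} — and (b) verify that the Euclidean Sobolev norm really is comparable to $\|f\|_{H^\alpha(\mathcal S)}$, which is where the Plancherel asymptotics \eqref{plancherel_measure} enter, since $(\lambda^2+\rho^2)^\alpha\asymp(1+\lambda)^{2\alpha}$ on $[1/R_0,\infty)$ and $|{\bf c}(\lambda)|^{-2}\asymp\lambda^2(1+\lambda)^{n-3}\asymp\lambda^{n-1}$ for $\lambda$ large, so $\int(\lambda^2+\rho^2)^\alpha|\widehat f|^2|{\bf c}(\lambda)|^{-2}d\lambda\asymp\int\lambda^{2\alpha}|\mathscr Fg(\lambda)|^2\lambda^{n-1}d\lambda$, a genuine $\R^n$ Sobolev norm. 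Once these comparisons are in place, parts (i)--(iii) follow by summing finitely many (after truncating the Bessel series at level $M$) applications of \eqref{euclid2} plus the $L^\infty(B_R)$-bounded remainder, and one reads off exactly the ranges stated, matching \cite[Theorem 3]{Sjolin2}.
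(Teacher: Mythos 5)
Your proposal captures the right framework for the ball $\overline B_{R_0}$: Bessel series expansion of $\varphi_\lambda$, the Schwartz correspondence of Lemma \ref{schwartz_correspondence}, the Plancherel asymptotics \eqref{plancherel_measure} to identify $\|f\|_{H^\alpha(\mathcal S)}\asymp\|g\|_{H^\alpha(\R^n)}$, and then Sj\"olin's Euclidean theorem. However, the paper does this more simply than you propose: it takes $M=0$ in Lemma \ref{bessel_series_expansion}, so only the $\J_{(n-2)/2}$ term transfers to $\R^n$ and the error $E_1$ is killed by Cauchy--Schwarz against $(\lambda^2+\rho^2)^{-\alpha}{|{\bf c}(\lambda)|}^{-2}d\lambda$ (Lemma \ref{large_freq_lemma_ball_piece2}). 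Your idea of summing over $l$ and invoking Sj\"olin on $\R^{n+2l}$ introduces a real difficulty: the admissible range $q\le 2(n+2l)/(n+2l-2\alpha)$ shrinks to $q\le 2$ as $l\to\infty$, so without a careful quantitative gain from the $s^{2l}(4R_1)^{-l}$ factors you cannot recover the stated range of $q$; the paper sidesteps this entirely by not expanding past $l=0$.

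The genuine gap is the annulus $A(R_0,R)$. The Bessel series of Lemma \ref{bessel_series_expansion} is only valid for $s\le R_0$, and you acknowledge this, but your treatment of $(R_0,R)$ reduces to a single sentence claiming that Lemma \ref{oscillatory_integral_estimate} ``handles the part of the kernel not covered by the Bessel expansion.'' This is not a proof sketch; it is an appeal to an estimate with no indication of how the kernel gets into the required form. The crucial missing ingredient is the refined Harish-Chandra expansion \eqref{anker_series_expansion} of Anker--Pierfelice--Vallarino, $\varphi_\lambda(s)=2^{-m_\z/2}A(s)^{-1/2}[{\bf c}(\lambda)\sum_\mu\Gamma_\mu(\lambda)e^{(i\lambda-\mu)s}+\text{conj.}]$, together with the coefficient decay \eqref{coefficient_estimate}. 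That expansion is what produces the plane-wave phases $e^{\pm i\lambda s}$ on the annulus and allows the paper to write $T=T_3+T_4+T_5$ (the leading oscillatory terms plus an error). The bound $\|T_{3,4}f\|_{L^4(A(R_0,R))}\lesssim\|f\|_{H^{1/4}}$ is then proved by a $TT^*$/duality argument, at which point Lemma \ref{oscillatory_integral_estimate} bounds the kernel $I(s,s')$, the diagonal singularity $|s-s'|^{-1/2}$ is re-expressed as a one-dimensional Riesz potential via \eqref{riesz_identity}, and Pitt's inequality \eqref{Pitt's_ineq} closes the $L^{4/3}$ dual estimate. (You mention Pitt's inequality, but place it in the Sobolev-norm comparison on the ball, where it is not used; it belongs in this annulus duality step.) You also omit the dichotomy the paper uses on the annulus: for $\alpha>1/2$ the pointwise bound \eqref{pointwise_phi_lambda} plus Cauchy--Schwarz already gives $L^\infty$, and only in the regime $1/4\le\alpha\le 1/2$ is the oscillatory $L^4$ machinery needed, with the observation $2n/(n-2\alpha)\le 4$ making $q=4$ enough. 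Finally, your target exponent is off: you speak of getting an ``$L^1(B_R)$ bound when $\alpha=1/4$,'' but the claim at $\alpha=1/4$ is $q\le 2n/(n-1/2)$, and the annulus piece is proved at $q=4$; the $L^1$ statement is weaker than what is required.
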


The key strategy in the proof of Lemma \ref{large_freq_lemma} is to invoke the series expansions of $\varphi_\lambda$. This prompts us to decompose the ball $B_R$ into a closed ball and an open annulus, $B_R = \overline{B}_{R_0} \cup A(R_0,R)$, where
\begin{equation*}
\overline{B}_{R_0}=\{x \in \mathcal S : d(e,x)\leq R_0 \},\:\:\:\:\:A(R_0,R) = \{x \in \mathcal S : R_0 < d(e,x) < R\}\:,
\end{equation*}
and study the boundedness of the linearized maximal function on the ball and the annulus separately.

\subsubsection{Boundedness on the ball $\overline{B}_{R_0}$ :} In this subsection, we look at inequalities of the form:
\begin{equation} \label{linearized_maximal_estimate_ball}
{\|Tf\|}_{L^q\left(\overline{B}_{R_0}\right)} \lesssim {\|f\|}_{H^{\alpha}(\mathcal S)}\:.
\end{equation}
\begin{lemma} \label{large_freq_lemma_ball}
\begin{itemize}
\item[(i)] If $1/4 \le \alpha < n/2$, then (\ref{linearized_maximal_estimate_ball}) holds if   $q \le 2n/(n-2\alpha)$.
\item[(ii)] If $\alpha = n/2$, then (\ref{linearized_maximal_estimate_ball}) holds if $q< \infty$.
\item[(iii)] If $\alpha > n/2$, then (\ref{linearized_maximal_estimate_ball}) holds for all $q$.
\end{itemize}
\end{lemma}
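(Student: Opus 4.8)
The plan is to transfer the estimate to Sj\"olin's Euclidean maximal bound (\ref{euclid2}) --- applied in a whole family of \emph{raised} dimensions --- by means of the Bessel series expansion of $\varphi_\lambda$ near the identity (Lemma \ref{bessel_series_expansion}) together with the Schwartz correspondence (Lemma \ref{schwartz_correspondence}). Since $\text{Supp}(\widehat f)\subset[1/R_0,\infty)$ we have $f\in\mathscr S^2(\mathcal S)^\infty_o$, so Lemma \ref{schwartz_correspondence} produces a radial $g\in\mathscr S(\R^n)^\infty_o$ with $\lambda^{n-1}\mathscr Fg(\lambda)=|{\bf c}(\lambda)|^{-2}\widehat f(\lambda)$; as $1+|\lambda|\asymp|\lambda|$ on $[1/R_0,\infty)$, the asymptotics (\ref{plancherel_measure}) give $\|g\|_{H^\alpha(\R^n)}\asymp\|f\|_{H^\alpha(\mathcal S)}$, and more generally $\|h\|_{H^{\alpha+l}(\R^{n+2l})}\lesssim_l\|g\|_{H^\alpha(\R^n)}$ whenever $\mathscr F_{n+2l}h(\lambda)=\lambda^{-2l}\mathscr Fg(\lambda)$, since $|\lambda|^{-2l}$ is bounded on the support. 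First I would fix a large integer $M$, depending only on $n$ and $\alpha$, and insert the finite Bessel expansion of $\varphi_\lambda(s)$ into the linearized operator $Tf$ of (\ref{linearization}); this is legitimate for $s\le R_0$ and splits $Tf$ into $M+1$ main terms plus a remainder built from $E_{M+1}$.

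For the remainder I would split the $\lambda$-integral at $\lambda=1/s$ and use the two-regime decay of $E_{M+1}$ from Lemma \ref{bessel_series_expansion}; Cauchy--Schwarz together with (\ref{plancherel_measure}) then gives, for $M$ large enough, a pointwise bound $|(\text{remainder})(s)|\lesssim s^{\delta}\|f\|_{H^\alpha(\mathcal S)}$ for some $\delta>0$, hence $\lesssim\|f\|_{H^\alpha(\mathcal S)}$ uniformly on $(0,R_0)$. Since $\overline B_{R_0}$ has finite measure this settles the remainder for every $q$, including $q=\infty$.

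For the $l$-th main term ($0\le l\le M$) I would write $\lambda^{n-1}\mathscr Fg(\lambda)=\lambda^{(n+2l)-1}\bigl(\lambda^{-2l}\mathscr Fg(\lambda)\bigr)$; because $\mathscr Fg$ is supported away from $0$, the factor $\lambda^{-2l}\mathscr Fg$ lies in $\mathscr S(\R)^\infty_e$ and hence equals $\mathscr F_{n+2l}g_l$ for a unique radial $g_l\in\mathscr S(\R^{n+2l})^\infty_o$. Since $\frac{n-2}{2}+l=\frac{(n+2l)-2}{2}$, the $l$-th main term is precisely $c_0\,a_l(s)\left(\frac{s^{n-1}}{A(s)}\right)^{1/2}s^{2l}\,e^{it(s)\rho^2}\,\widetilde S^{(n+2l)}_{t(s)}g_l(s)$, where $\widetilde S^{(m)}_t$ denotes the $m$-dimensional Euclidean Schr\"odinger propagator of (\ref{Rn prop}); as $\rho\ge1$ one has $t(s)\in(0,1/\rho^2)\subset(0,1)$, so $|\widetilde S^{(n+2l)}_{t(s)}g_l(s)|\le\widetilde S^{(n+2l),*}g_l(s)$. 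The factors $a_l$ and $(s^{n-1}/A(s))^{1/2}$ are bounded on $(0,R_0)$ and $A(s)\asymp s^{n-1}$ there, so taking $L^q(\overline B_{R_0})$ norms and using $s^{2lq}s^{n-1}\le R_0^{2l(q-1)}s^{(n+2l)-1}$, valid for $q\ge1$, bounds the $l$-th term by $C_l\,\|\widetilde S^{(n+2l),*}g_l\|_{L^q(B_{R_0}^{n+2l})}$.

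The decisive point --- and the step I expect to be the main obstacle --- is to invoke (\ref{euclid2}) in dimension $m=n+2l$ with the right regularity. The naive exponent $\alpha$ would only give the range $q\le 2(n+2l)/((n+2l)-2\alpha)$, strictly smaller than $2n/(n-2\alpha)$, and would miss the endpoint in case (i). Instead one uses the gain of $l$ derivatives $\|g_l\|_{H^{\alpha+l}(\R^{n+2l})}\lesssim_l\|g\|_{H^\alpha(\R^n)}\asymp\|f\|_{H^\alpha(\mathcal S)}$, paid for by the $\lambda^{-2l}$ in $\mathscr F_{n+2l}g_l$, and observes that $(q,\alpha+l)$ always lies in Sj\"olin's admissible range for $\R^{n+2l}$: in case (i), $\alpha+l<(n+2l)/2$ and $q\le 2n/(n-2\alpha)\le 2(n+2l)/((n+2l)-2(\alpha+l))$; in case (ii), $\alpha+l=(n+2l)/2$ and $q<\infty$; in case (iii), $\alpha+l>(n+2l)/2$ and $q$ is arbitrary. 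Hence $\|\widetilde S^{(n+2l),*}g_l\|_{L^q(B_{R_0}^{n+2l})}\lesssim\|g_l\|_{H^{\alpha+l}(\R^{n+2l})}\lesssim_l\|f\|_{H^\alpha(\mathcal S)}$ by (\ref{euclid2}), and summing the $M+1$ main terms together with the remainder estimate completes the proof.
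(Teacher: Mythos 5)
Your proposal is correct, but it takes a genuinely different route from the paper's. The paper truncates the Bessel expansion at $M=0$, obtaining one main term $T_1$ and one error term $T_2$ built from $E_1$; it transfers $T_1$ alone to Sj\"olin's result in the ambient dimension $n$ via the Schwartz correspondence, and then handles $T_2$ directly by Cauchy--Schwarz, observing (Remark 4.7) that $T_2$ is in fact bounded on a strictly \emph{larger} $q$--range ($q<2n/(n-2\alpha-4)$ when $\alpha<n/2-2$, all $q$ otherwise), so the threshold $q\le 2n/(n-2\alpha)$ comes entirely from $T_1$. You instead take $M$ large (depending on $n$, $\alpha$) so that the tail $E_{M+1}$ is harmless in $L^\infty$, and you transfer \emph{each} of the $M+1$ Bessel terms to Sj\"olin's estimate, but in the raised dimension $n+2l$ with raised regularity $\alpha+l$. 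Your key observation --- that the shift $(\alpha,n)\mapsto(\alpha+l,n+2l)$ preserves the Sj\"olin cutoff, since $2(n+2l)/((n+2l)-2(\alpha+l))=2(n+2l)/(n-2\alpha)\ge 2n/(n-2\alpha)$, and that the $\lambda^{-2l}$ weight (exactly the gain of $l$ Euclidean derivatives on frequencies $\gtrsim 1$) pays for this raise --- is correct, and the $s^{2l}$ factor from the expansion is exactly what converts the $\R^n$ ball measure $s^{n-1}ds$ into $s^{(n+2l)-1}ds$ up to a harmless constant. Both arguments work; the paper's is leaner because it exploits that the single error term at $M=0$ is already better behaved than the main term, while yours is somewhat more systematic (every main term passes through the same Euclidean theorem), at the cost of carrying the dimension-raising bookkeeping through the whole expansion and having to justify the $l$-dependent constants.
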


To prove Lemma \ref{large_freq_lemma_ball}, we expand $\varphi_\lambda$ using Lemma \ref{bessel_series_expansion}. For $s\in [0,R_0]$, putting $M=0$ in Lemma \ref{bessel_series_expansion}, we get
\begin{equation} \label{ball_pf_eq1}
\varphi_\lambda(s)= c_0 {\left(\frac{s^{n-1}}{A(s)}\right)}^{1/2} \J_{\frac{n-2}{2}}(\lambda s) + E_1(\lambda,s),\:\:\:\:\:\:\:\:\:\:\lambda \ge 1/R_0,
\end{equation}
where
\begin{equation} \label{ball_pf_eq2}
	\left|E_1(\lambda,s) \right| \lesssim  \begin{cases}
	 s^2,  & \text{ if  }\: \lambda s \le 1 \\
	s^2 {(\lambda s)}^{-\left(\frac{n+1}{2} \right)}, &\text{ if  }\: \lambda s > 1 \:.
	\end{cases}
\end{equation}
In accordance with (\ref{ball_pf_eq1}), we decompose $Tf$ as
\begin{equation*}
Tf(s)=T_1f(s)+T_2f(s)
\end{equation*}
where
\begin{eqnarray*}
T_1f(s) &=& c_0 {\left(\frac{s^{n-1}}{A(s)}\right)}^{1/2} \int_{\frac{1}{R_0}}^\infty \J_{\frac{n-2}{2}}(\lambda s)\:e^{it(s)\left(\lambda^2 + \rho^2\right)}\:\widehat{f}(\lambda)\: {|{\bf c}(\lambda)|}^{-2}\: d\lambda, \\
T_2f(s)&=& \int_{\frac{1}{R_0}}^\infty E_1(\lambda, s)\:e^{it(s)\left(\lambda^2 + \rho^2\right)}\:\widehat{f}(\lambda)\: {|{\bf c}(\lambda)|}^{-2}\: d\lambda. \\
\end{eqnarray*}
Now it suffices to look at inequalities of the form
\begin{equation} \label{linearized_maximal_estimate_ball_pieces}
{\|T_jf\|}_{L^q\left(\overline{B}_{R_0}\right)} \lesssim {\|f\|}_{H^{\alpha}(\mathcal S)},\:\:\:\:\:j=1,2.
\end{equation}
So, it is enough to prove the following lemmata regarding $T_1$ and $T_2$:
\begin{lemma} \label{large_freq_lemma_ball_piece1}
 For $T_1$ we have,
\begin{itemize}
\item[(i)] If $1/4 \le \alpha < n/2$, then (\ref{linearized_maximal_estimate_ball_pieces}) holds if   $q \le 2n/(n-2\alpha)$.
\item[(ii)] If $\alpha = n/2$, then (\ref{linearized_maximal_estimate_ball_pieces}) holds if $q< \infty$.
\item[(iii)] If $\alpha > n/2$, then (\ref{linearized_maximal_estimate_ball_pieces}) holds for all $q$.
\end{itemize}
\end{lemma}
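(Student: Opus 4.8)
The plan is to transfer the estimate for $T_1$ to the Euclidean Schr\"odinger maximal estimate of Sj\"olin through the correspondence of Lemma~\ref{schwartz_correspondence}, and then to convert the Euclidean Sobolev norm thus produced back into $H^\alpha(\mathcal S)$ using the asymptotics (\ref{plancherel_measure}) of the Plancherel density. Under the standing assumption $\text{Supp}(\widehat f)\subset[1/R_0,\infty)$ the radial $L^2$-Schwartz function $f$ belongs to $\mathscr S^2(\mathcal S)^\infty_{o}$, since its spherical Fourier transform extends to an even Schwartz function on $\R$ that vanishes near the origin; hence Lemma~\ref{schwartz_correspondence} produces a unique $g\in\mathscr S(\R^n)^\infty_{o}$ with $\lambda^{n-1}\mathscr Fg(\lambda)={|{\bf c}(\lambda)|}^{-2}\widehat f(\lambda)$ for $\lambda>0$, and in particular $\mathscr Fg$ vanishes on $[0,1/R_0)$. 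Substituting into the definition of $T_1f$ and factoring out the unimodular constant $e^{it(s)\rho^2}$, we obtain for $0<s\le R_0$
\begin{equation*}
T_1f(s)=c_0\,e^{it(s)\rho^2}\,{\left(\frac{s^{n-1}}{A(s)}\right)}^{1/2}\int_0^\infty\J_{\frac{n-2}{2}}(\lambda s)\,e^{it(s)\lambda^2}\,\mathscr Fg(\lambda)\,\lambda^{n-1}\,d\lambda,
\end{equation*}
where the lower limit has been restored to $0$ since $\mathscr Fg$ is supported in $[1/R_0,\infty)$. By (\ref{Rn prop}) the inner integral is precisely $\tilde S_{t(s)}g(s)$.

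Now $A(s)\asymp s^{n-1}$ on $\overline B_{R_0}$ by (\ref{density_function}), so the prefactor ${(s^{n-1}/A(s))}^{1/2}$ is bounded above and below there; moreover $\rho\ge 1$ for every Damek--Ricci space, whence $(0,1/\rho^2)\subset(0,1)$ and $t(s)\in(0,1)$. This yields the pointwise bound
\begin{equation*}
|T_1f(s)|\lesssim\sup_{0<\tau<1/\rho^2}\left|\tilde S_\tau g(s)\right|\le\tilde S^*g(s),\qquad s\in\overline B_{R_0},
\end{equation*}
and therefore ${\|T_1f\|}_{L^q(\overline B_{R_0})}\lesssim{\|\tilde S^*g\|}_{L^q(B_R)}$, where now $B_R\subset\R^n$. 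By (\ref{euclid2}), i.e.\ \cite[Theorem 3]{Sjolin2}, ${\|\tilde S^*g\|}_{L^q(B_R)}\lesssim{\|g\|}_{H^\alpha(\R^n)}$ in exactly the three regimes claimed in the lemma: $q\le 2n/(n-2\alpha)$ when $1/4\le\alpha<n/2$; $q<\infty$ when $\alpha=n/2$; and every $q$ when $\alpha>n/2$.

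It remains to bound ${\|g\|}_{H^\alpha(\R^n)}$ by ${\|f\|}_{H^\alpha(\mathcal S)}$. Writing the radial Sobolev norm on $\R^n$ and inserting $\mathscr Fg(\lambda)=\lambda^{-(n-1)}{|{\bf c}(\lambda)|}^{-2}\widehat f(\lambda)$ on $[1/R_0,\infty)$ gives
\begin{equation*}
{\|g\|}_{H^\alpha(\R^n)}^2\asymp\int_{1/R_0}^\infty{(1+\lambda^2)}^\alpha\,\lambda^{-(n-1)}\,{|{\bf c}(\lambda)|}^{-4}\,{|\widehat f(\lambda)|}^2\,d\lambda,
\end{equation*}
which is to be compared with ${\|f\|}_{H^\alpha(\mathcal S)}^2=\int_{1/R_0}^\infty{(\lambda^2+\rho^2)}^\alpha{|{\bf c}(\lambda)|}^{-2}{|\widehat f(\lambda)|}^2\,d\lambda$. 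As $(1+\lambda^2)\asymp(\lambda^2+\rho^2)$, the comparison reduces to the inequality $\lambda^{-(n-1)}{|{\bf c}(\lambda)|}^{-2}\lesssim 1$ on $[1/R_0,\infty)$, which follows at once from (\ref{plancherel_measure}): there ${|{\bf c}(\lambda)|}^{-2}\asymp\lambda^2{(1+\lambda)}^{n-3}$, hence ${|{\bf c}(\lambda)|}^{-2}\asymp\lambda^{n-1}$ for large $\lambda$ and ${|{\bf c}(\lambda)|}^{-2}\asymp\lambda^2\lesssim\lambda^{n-1}$ for bounded $\lambda\ge 1/R_0$. Hence ${\|g\|}_{H^\alpha(\R^n)}\lesssim{\|f\|}_{H^\alpha(\mathcal S)}$, and the lemma follows. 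The only steps demanding genuine attention are the one putting $f$ in $\mathscr S^2(\mathcal S)^\infty_{o}$ — which is exactly why the standing assumption fixes the frequency support to $[1/R_0,\infty)$ — and the use of the large-$\lambda$ branch of (\ref{plancherel_measure}) in the final weight comparison; the substantial analytic input is entirely imported from the cited Euclidean theorem.
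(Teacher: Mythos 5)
Your argument is correct and follows essentially the same route as the paper: use Lemma~\ref{schwartz_correspondence} to pass to a Euclidean radial Schwartz function $g$ with spherical Fourier data supported away from the origin, transfer the estimate via the Bessel leading term of Lemma~\ref{bessel_series_expansion}, invoke Sj\"olin's Theorem~3, and compare Sobolev weights via~(\ref{plancherel_measure}). The only cosmetic differences from the paper's proof are that you dominate $|T_1f(s)|$ by the genuine Euclidean maximal function $\tilde S^*g(s)$ (using $\rho\ge 1$ to see $(0,1/\rho^2)\subset(0,1)$) rather than comparing to the linearized Euclidean operator $T_0\mathcal M(\mathscr A f)$, and you record only the one-sided inequalities $\|T_1f\|_{L^q(\overline B_{R_0})}\lesssim\|\tilde S^*g\|_{L^q}$ and $\|g\|_{H^\alpha(\R^n)}\lesssim\|f\|_{H^\alpha(\mathcal S)}$, whereas the paper obtains two-sided $\asymp$ comparisons (the latter being reused in the reverse direction in the necessity arguments of Section~5); for the present lemma, of course, your one-sided bounds suffice.
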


\begin{lemma} \label{large_freq_lemma_ball_piece2}
For $T_2$ we have,
\begin{itemize}
\item[(i)] If $n \le 4$, then (\ref{linearized_maximal_estimate_ball_pieces}) holds for all $\alpha$ and all $q$.
\item[(ii)] If $n \ge 5$, then
\begin{itemize}
\item[(a)] If $\alpha < \frac{n}{2}-2$, then (\ref{linearized_maximal_estimate_ball_pieces}) holds if $q < 2n/(n-2\alpha - 4)$.
\item[(b)] If $\alpha \ge \frac{n}{2}-2$, then (\ref{linearized_maximal_estimate_ball_pieces}) holds for all $q$.
\end{itemize}
\end{itemize}
\end{lemma}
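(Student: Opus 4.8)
The plan is to estimate $T_2f$ pointwise: the error term $E_1(\lambda,s)$ carries no oscillation in $\lambda$ worth exploiting, so one simply discards the phase $e^{it(s)(\lambda^2+\rho^2)}$ and applies Cauchy--Schwarz. Inserting the factor ${\left(\lambda^2+\rho^2\right)}^{\alpha/2}{\left(\lambda^2+\rho^2\right)}^{-\alpha/2}$ and splitting with respect to the measure ${|{\bf c}(\lambda)|}^{-2}\,d\lambda$ gives, for $s\in(0,R_0]$,
\begin{equation*}
|T_2f(s)|\ \le\ \|f\|_{H^\alpha(\mathcal S)}\,I(s)^{1/2},\qquad I(s):=\int_{1/R_0}^\infty {|E_1(\lambda,s)|}^2\,{\left(\lambda^2+\rho^2\right)}^{-\alpha}\,{|{\bf c}(\lambda)|}^{-2}\,d\lambda,
\end{equation*}
the first factor being exactly the Sobolev norm since $\mathrm{Supp}(\widehat f)\subset[1/R_0,\infty)$. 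Thus everything reduces to bounding $I(s)$ by a suitable power of $s$.

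To estimate $I(s)$ I would use ${\left(\lambda^2+\rho^2\right)}^{-\alpha}\asymp\lambda^{-2\alpha}$ and, from (\ref{plancherel_measure}) together with $\lambda\ge 1/R_0$, ${|{\bf c}(\lambda)|}^{-2}\asymp\lambda^{n-1}$ on $[1/R_0,\infty)$, and split the $\lambda$-integral at $\lambda=1/s$, feeding in the two-regime bound (\ref{ball_pf_eq2}) for $E_1$. On $\{1/R_0\le\lambda\le 1/s\}$, where $|E_1(\lambda,s)|\lesssim s^2$, one gets a contribution $\lesssim s^4\int_{1/R_0}^{1/s}\lambda^{n-1-2\alpha}\,d\lambda$; on $\{\lambda>1/s\}$, where $|E_1(\lambda,s)|\lesssim s^2(\lambda s)^{-(n+1)/2}$, one gets $\lesssim s^{3-n}\int_{1/s}^\infty\lambda^{-2-2\alpha}\,d\lambda\asymp s^{2\alpha+4-n}$ (the last integral converging because $\alpha\ge0$). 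Evaluating the first integral separately in the cases $\alpha<n/2$, $\alpha=n/2$, $\alpha>n/2$ then yields
\begin{equation*}
|T_2f(s)|\ \lesssim\ \|f\|_{H^\alpha(\mathcal S)}\cdot
\begin{cases}
s^{\alpha+2-n/2} & \text{if } \alpha<n/2,\\
s^2{\left(1+|\log s|\right)}^{1/2} & \text{if } \alpha=n/2,\\
s^2 & \text{if } \alpha>n/2,
\end{cases}
\qquad 0<s\le R_0.
\end{equation*}

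It remains to integrate this bound against the volume element, using $A(s)\asymp s^{n-1}$ on $[0,R_0]$ (see (\ref{density_function})). If $\alpha\ge n/2-2$ the displayed pointwise bound is itself bounded on $(0,R_0]$ (the logarithmic factor being harmless), so $T_2f$ is bounded on $\overline{B}_{R_0}$ and (\ref{linearized_maximal_estimate_ball_pieces}) holds for every $q\in[1,\infty]$; when $n\le4$ this covers all $\alpha\ge0$, which is part (i), and when $n\ge5$ it is part (ii)(b) (the subcase $\alpha\ge n/2$ included). If $n\ge5$ and $\alpha<n/2-2$, then $\alpha<n/2$ and $\alpha+2-n/2<0$, and
\begin{equation*}
\|T_2f\|_{L^q(\overline{B}_{R_0})}^q\ \lesssim\ \|f\|_{H^\alpha(\mathcal S)}^q\int_0^{R_0}s^{q(\alpha+2-n/2)+n-1}\,ds,
\end{equation*}
which is finite precisely when $q(\alpha+2-n/2)+n-1>-1$, i.e.\ when $q<2n/(n-2\alpha-4)$; this is part (ii)(a), and the endpoint genuinely fails since it produces the divergent $\int_0^{R_0}s^{-1}\,ds$.

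The argument is essentially mechanical once the pointwise estimate is in hand, and I do not expect a real obstacle. The points that need a little care are the case bookkeeping in the three ranges of $\alpha$ --- in particular the logarithmic borderline at $\alpha=n/2$, which naive power counting would miss --- and recognizing that for a genuine remainder like $E_1$ (which already gains a factor $s^2$ and decays polynomially in $\lambda$) one loses nothing by discarding the oscillation, so that, unlike for $T_1$, no stationary-phase input is required here.
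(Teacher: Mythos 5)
Your proposal is correct and follows essentially the same route as the paper: discard the oscillation, use the two‑regime size bound (\ref{ball_pf_eq2}) for $E_1$ together with the Plancherel density (\ref{plancherel_measure}), apply Cauchy–Schwarz against the $H^\alpha$ weight, and then integrate the resulting power of $s$ against $A(s)\asymp s^{n-1}$, with parts (i) and (ii)(b) obtained directly from the $L^\infty$ bound. The only cosmetic difference is that the paper first splits $T_2f$ into the $\lambda s\le 1$ and $\lambda s>1$ pieces and applies Cauchy–Schwarz to each, whereas you apply a single Cauchy–Schwarz and then split the weight integral $I(s)$; the resulting pointwise bounds and the $L^q$ bookkeeping are identical (do note, however, that your parenthetical claim of genuine failure at the endpoint $q=2n/(n-2\alpha-4)$ does not follow from the divergence of an upper bound, and the lemma does not assert it).
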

\begin{remark}
As $2n/(n-2\alpha - 4)$ is larger than $2n/(n-2\alpha )$, Lemma \ref{large_freq_lemma_ball_piece2} shows that the operator $T_2$ is much better behaved compared to $T_1$. It is due to this behaviour of $T_1$ that the statement of Theorem \ref{theorem} looks identical to that of Sj\"olin \cite[Theorem 3]{Sjolin2}.
\end{remark}
\begin{proof}[Proof of Lemma \ref{large_freq_lemma_ball_piece1}]
As $f \in \mathscr{S}^2(\mathcal S)_o$ with $\text{Supp}(\widehat{f}) \subset [\frac{1}{R_0},\infty)$, by Lemma \ref{schwartz_correspondence}, we consider the unique function $\mathcal{M}(\mathscr{A}f) \in \mathscr{S}(\R^n)^\infty_o$ with the desired connection in terms of their spherical Fourier transforms. We now compare suitable size estimates of $f$ to their Euclidean counterparts corresponding to $\mathcal{M}(\mathscr{A}f)$. We first obtain a pointwise comparability of the Schr\"odinger propagation. In this regard let us consider the Euclidean counterpart $T_0$ of the operator T, defined on $\mathscr{S}(\R^n)_{o}$ and is given by the formula
\begin{equation} \label{Rn_linearization}
T_0\psi (s)=\int_0^{\infty}\mathscr{F}(\psi)(\lambda)\:e^{it(s)\lambda^2}\J_{\frac{n-2}{2}}(\lambda s)\:\lambda^{n-1}d\lambda,\:\:\:\:\:s\in \R^+,\:\:\psi\in\mathscr{S}(\R^n)_{o},
\end{equation}
where $t$ is the same measurable function on $\R^+$, we had started with. Now, for $0 \le s \le R_0$, using the estimate of $A(s)$ (see (\ref{density_function})) and the relation (\ref{ball_pf_eq4}), we have,
\begin{eqnarray}\label{comparison}
|T_1f(s)| & \asymp & \left|\int_{\frac{1}{R_0}}^\infty \J_{\frac{n-2}{2}}(\lambda s)\:e^{it(s)\lambda^2 }\:\widehat{f}(\lambda)\: {|{\bf c}(\lambda)|}^{-2}\: d\lambda\right|\nonumber\\
&=& \left|\int_{\frac{1}{R_0}}^\infty \J_{\frac{n-2}{2}}(\lambda s)\:e^{it(s)\lambda^2 }\:\mathscr{F}(\mathcal{M}(\mathscr{A}f))(\lambda)\: \lambda^{n-1}\: d\lambda\right|\nonumber \\
& = & |T_0\mathcal{M}(\mathscr{A}f)(s)|\:.
\end{eqnarray}
Again using the estimate (\ref{density_function}) of $A(s)$, the above pointwise estimate implies that for all $q\in[1,\infty]$,
\begin{equation} \label{ball_pf_eq5}
{\|T_1f\|}_{L^q(\overline{B}_{R_0})} \asymp {\|T_0\mathcal{M}(\mathscr{A}f)\|}_{L^q(\overline{B(o,R_0)})}\:.
\end{equation}
Next, we will compare the Sobolev norms of $f$ and $\mathcal{M}(\mathscr{A}f)$. In this regard, again using the relation (\ref{ball_pf_eq4}), we get that for any $\alpha \ge 0$:
\begin{eqnarray} \label{ball_pf_eq6}
{\|f\|}_{H^\alpha(\mathcal S)} &=& {\left(\int_{\frac{1}{R_0}}^\infty {\left(\lambda^2 + \rho^2\right)}^\alpha\:{\left|\widehat{f}(\lambda)\right|}^2\: {|{\bf c}(\lambda)|}^{-2}\: d\lambda\right)}^{1/2} \nonumber\\
& \asymp & {\left(\int_{\frac{1}{R_0}}^\infty {\left(\lambda^2 + 1\right)}^\alpha\:{\left|\mathscr{F}(\mathcal{M}(\mathscr{A}f))(\lambda)\right|}^2\:\frac{\lambda^{2(n-1)}}{{|{\bf c}(\lambda)|}^{-4}}\: {|{\bf c}(\lambda)|}^{-2}\: d\lambda\right)}^{1/2} \nonumber\\
& \asymp & {\left(\int_{\frac{1}{R_0}}^\infty {\left(\lambda^2 + 1\right)}^\alpha\:{\left|\mathscr{F}(\mathcal{M}(\mathscr{A}f))(\lambda)\right|}^2\:\lambda^{n-1}\: d\lambda\right)}^{1/2} \nonumber\\
&=& {\|\mathcal{M}(\mathscr{A}f)\|}_{H^\alpha(\R^n)}\:.
\end{eqnarray}
Then combining (\ref{ball_pf_eq5}), (\ref{ball_pf_eq6}) and \cite[Theorem 3]{Sjolin2}, we get Lemma \ref{large_freq_lemma_ball_piece1}. \end{proof}

\begin{proof}[Proof of Lemma \ref{large_freq_lemma_ball_piece2}]
We recall that the operator $T_2$ was defined using the error term $E_1(\lambda,s)$ given in (\ref{ball_pf_eq1}).
By the estimate (\ref{ball_pf_eq2}) of $E_1(\lambda,s)$ and the Cauchy-Schwarz inequality, we get that for $s\in (0, R_0]$,
\begin{eqnarray} \label{ball_pf_eq7}
|T_2 f(s)| & \lesssim & s^2 \int_{\frac{1}{R_0}}^{\frac{1}{s}}  |\widehat{f}(\lambda)| {|{\bf c}(\lambda)|}^{-2}\: d\lambda + \frac{1}{s^{\left(\frac{n-3}{2}\right)}}\int_{\frac{1}{s}}^{\infty}  \lambda^{-\left(\frac{n+1}{2}\right)}|\widehat{f}(\lambda)| {|{\bf c}(\lambda)|}^{-2}\: d\lambda \nonumber\\
& \le & s^2 {\left(\int_{\frac{1}{R_0}}^\infty {\left(\lambda^2 + \rho^2\right)}^\alpha {|\widehat{f}(\lambda)|}^2 {|{\bf c}(\lambda)|}^{-2} d\lambda\right)}^{1/2} {\left(\int_{\frac{1}{R_0}}^{\frac{1}{s}}\frac{{|{\bf c}(\lambda)|}^{-2}\: d\lambda}{{\left(\lambda^2 + \rho^2\right)}^\alpha}\right)}^{1/2} \nonumber\\
&& + \frac{1}{s^{\left(\frac{n-3}{2}\right)}} {\left(\int_{\frac{1}{R_0}}^\infty {\left(\lambda^2 + \rho^2\right)}^\alpha {|\widehat{f}(\lambda)|}^2 {|{\bf c}(\lambda)|}^{-2} d\lambda\right)}^{1/2} {\left(\int_{\frac{1}{s}}^{\infty}\frac{{|{\bf c}(\lambda)|}^{-2}\: d\lambda}{{\left(\lambda^2 + \rho^2\right)}^\alpha \lambda^{n+1}}\right)}^{1/2} \nonumber\\
&=& {\|f\|}_{H^\alpha(\mathcal S)} \left[s^2\: I_1(s)^{1/2} + \frac{1}{s^{\left(\frac{n-3}{2}\right)}} \: I_2(s)^{1/2}\right]\:,
\end{eqnarray}
where
\begin{equation*}
I_1(s) = \int_{\frac{1}{R_0}}^{\frac{1}{s}}\frac{{|{\bf c}(\lambda)|}^{-2}\: d\lambda}{{\left(\lambda^2 + \rho^2\right)}^\alpha}\:,\:\:\:\:\:\:\:\:\: I_2(s) = \int_{\frac{1}{s}}^{\infty}\frac{{|{\bf c}(\lambda)|}^{-2}\: d\lambda}{{\left(\lambda^2 + \rho^2\right)}^\alpha \lambda^{n+1}}\:.
\end{equation*}
Using the estimate (\ref{plancherel_measure}) of $|{\bf c}(\lambda)|^{-2}$, we see that
\begin{equation} \label{ball_pf_eq8}
I_1(s) \lesssim \int_{\frac{1}{R_0}}^{\frac{1}{s}}\lambda^{n-1-2\alpha}\: d\lambda \lesssim
\begin{cases}
	 {\left(\frac{1}{s}\right)}^{n-2\alpha} - {\left(\frac{1}{R_0}\right)}^{n-2\alpha}\:,\: \text{ if } \alpha < \frac{n}{2} \\
	\log\left(\frac{1}{s}\right) - \log\left(\frac{1}{R_0}\right)\:,\: \text{ if } \alpha = \frac{n}{2} \\
	R^{2\alpha -n}_0 - s^{2 \alpha -n}\:,\: \text{ if } \alpha > \frac{n}{2} \:,
	\end{cases}
\end{equation}
and
\begin{equation} \label{ball_pf_eq9}
I_2(s) \lesssim \int_{\frac{1}{s}}^\infty \frac{d\lambda}{\lambda^{2\alpha + 2}} = \frac{s^{2 \alpha +1}}{2 \alpha +1}\:.
\end{equation}
Now, $(i)$ and part $(b)$ of $(ii)$ follow easily from (\ref{ball_pf_eq7})-(\ref{ball_pf_eq9}), for $q=\infty$. For part $(a)$ of $(ii)$, assuming $\alpha < \frac{n}{2}-2$, combining (\ref{ball_pf_eq7})-(\ref{ball_pf_eq9}) and using the estimate (\ref{density_function}) of $A(s)$ we get that
\begin{eqnarray*}
\int_0^{R_0} {|T_2f(s)|}^q\: A(s)\: ds &=& \int_0^1 {|T_2f(s)|}^q\: A(s)\: ds + \int_1^{R_0} {|T_2f(s)|}^q\: A(s)\: ds  \\
& \lesssim & {\|f\|}^q_{H^\alpha(\mathcal S)} \left(\int_0^1 \frac{ds}{s^{q\left(\frac{n}{2}-\alpha -2\right)-n+1}} + \int_1^{R_0} s^{n-1}\:ds \right)\:.
\end{eqnarray*}
The first integral is convergent if and only if
\begin{equation*}
q\left(\frac{n}{2}-\alpha -2\right)-n+1 < 1,
\end{equation*}
that is, $q< \frac{2n}{n-2\alpha - 4}$. This completes the proof of Lemma \ref{large_freq_lemma_ball_piece2}.
\end{proof}

Using Lemmata \ref{large_freq_lemma_ball_piece1} and \ref{large_freq_lemma_ball_piece2}, the proof of Lemma \ref{large_freq_lemma_ball} follows.

\subsubsection{Boundedness on the annulus $A(R_0,R)$:} In this subsection, we look at inequalities of the form:
\begin{equation} \label{linearized_maximal_estimate_annulus}
{\|Tf\|}_{L^q\left(A(R_0,R)\right)} \lesssim {\|f\|}_{H^{\alpha}(\mathcal S)}\:.
\end{equation}
In our first lemma, we work in the higher regularity regime ($\alpha > 1/2$) by only utilizing the pointwise decay of $\varphi_\lambda$.
\begin{lemma}\label{triviality}
If $\alpha > 1/2$, then (\ref{linearized_maximal_estimate_annulus}) holds for all $q$.
\end{lemma}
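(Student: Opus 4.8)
The plan is to use the pointwise estimate (\ref{pointwise_phi_lambda}) for $\varphi_\lambda(s)$ on the annulus $A(R_0,R)$, which gives $|\varphi_\lambda(s)| \lesssim |{\bf c}(\lambda)| e^{-\rho s}$ uniformly for $s \in (R_0,R)$ and $\lambda \in \R\setminus\{0\}$. Since $\text{Supp}(\widehat f) \subset [1/R_0,\infty)$, the linearized operator $Tf(s)$ from (\ref{linearization}) can be estimated directly, dropping the oscillatory factor $e^{it(s)(\lambda^2+\rho^2)}$:
\begin{equation*}
|Tf(s)| \le \int_{1/R_0}^\infty |\varphi_\lambda(s)|\, |\widehat f(\lambda)|\, {|{\bf c}(\lambda)|}^{-2}\, d\lambda \lesssim e^{-\rho s} \int_{1/R_0}^\infty |{\bf c}(\lambda)|^{-1}\, |\widehat f(\lambda)|\, d\lambda.
\end{equation*}
So on the annulus the factor $e^{-\rho s}$ is harmless (bounded by $1$), and it remains to bound the $\lambda$-integral by $\|f\|_{H^\alpha(\mathcal S)}$.

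The key step is a Cauchy--Schwarz argument in $\lambda$. Write $|{\bf c}(\lambda)|^{-1} = {|{\bf c}(\lambda)|}^{-2} \cdot |{\bf c}(\lambda)|$, so that
\begin{equation*}
\int_{1/R_0}^\infty |{\bf c}(\lambda)|^{-1}\, |\widehat f(\lambda)|\, d\lambda = \int_{1/R_0}^\infty \left({(\lambda^2+\rho^2)}^{\alpha/2} |\widehat f(\lambda)|\, {|{\bf c}(\lambda)|}^{-1}\right) \left({(\lambda^2+\rho^2)}^{-\alpha/2}\right) d\lambda.
\end{equation*}
Applying Cauchy--Schwarz with respect to the measure ${|{\bf c}(\lambda)|}^{-2}\,d\lambda$ is not quite the right pairing; instead I would split as
\begin{equation*}
\int_{1/R_0}^\infty |{\bf c}(\lambda)|^{-1} |\widehat f(\lambda)|\, d\lambda \le \left(\int_{1/R_0}^\infty {(\lambda^2+\rho^2)}^{\alpha} |\widehat f(\lambda)|^2 {|{\bf c}(\lambda)|}^{-2}\, d\lambda\right)^{1/2} \left(\int_{1/R_0}^\infty \frac{|{\bf c}(\lambda)|^{-2} \cdot |{\bf c}(\lambda)|^{2}}{{(\lambda^2+\rho^2)}^{\alpha}\, {|{\bf c}(\lambda)|}^{-2}} \cdot {|{\bf c}(\lambda)|}^{-2}\, d\lambda\right)^{1/2},
\end{equation*}
which after simplification makes the first factor exactly $\|f\|_{H^\alpha(\mathcal S)}$ and leaves the second factor as $\left(\int_{1/R_0}^\infty {(\lambda^2+\rho^2)}^{-\alpha}\, d\lambda\right)^{1/2}$. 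By (\ref{plancherel_measure}) and the large-$\lambda$ behaviour, this integral converges precisely when $2\alpha > 1$, i.e. $\alpha > 1/2$, which is exactly the hypothesis. (Near $\lambda = 1/R_0$ there is no singularity since we are bounded away from zero.)

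Having the pointwise bound $|Tf(s)| \lesssim \|f\|_{H^\alpha(\mathcal S)}$ for all $s \in (R_0,R)$, integrating over the annulus gives $\|Tf\|_{L^q(A(R_0,R))} \lesssim \|f\|_{H^\alpha(\mathcal S)}$ for every $q \in [1,\infty]$, since $A(R_0,R)$ has finite measure by (\ref{density_function}). I do not anticipate a genuine obstacle here: the whole point of the lemma is that in the regularity range $\alpha > 1/2$ the exponential decay of $\varphi_\lambda$ on the annulus is strong enough to absorb the growth of the Plancherel density without invoking any oscillation, so the only thing to verify carefully is the bookkeeping of exponents in the Cauchy--Schwarz step and the convergence condition $\alpha > 1/2$. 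The more delicate range $1/4 \le \alpha \le 1/2$ on the annulus — where oscillatory cancellation via Lemma \ref{oscillatory_integral_estimate} becomes essential — is presumably handled in a subsequent lemma, not here.
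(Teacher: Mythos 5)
Your proof is correct and takes essentially the same route as the paper: both start from the pointwise decay $|\varphi_\lambda(s)|\lesssim |{\bf c}(\lambda)|\,e^{-\rho s}$ from (\ref{pointwise_phi_lambda}), reduce to a single $\lambda$-integral bound, and apply Cauchy--Schwarz against the Plancherel density to peel off the $H^\alpha$-norm, leaving $\int_{1/R_0}^\infty(\lambda^2+\rho^2)^{-\alpha}\,d\lambda$ which converges precisely when $\alpha>1/2$. The only cosmetic difference is that the paper substitutes $|{\bf c}(\lambda)|\asymp\lambda^{-(n-1)/2}$ for large $\lambda$ before applying Cauchy--Schwarz, whereas you keep the ${\bf c}$-function symbolically; your intermediate display for the second Cauchy--Schwarz factor is written in an unnecessarily convoluted way but simplifies to the correct quantity.
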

\begin{proof}
It suffices to prove the lemma for $q=\infty$. The proof will use the pointwise decay of $\varphi_\lambda$ when both $\lambda$ and $s$ are large (\ref{pointwise_phi_lambda}) and the Cauchy-Schwarz inequality. Indeed, for $\lambda \ge 1/R_0$ and $s\in (R_0,R)$,
\begin{eqnarray*}
|Tf(s)| & \lesssim & e^{-\rho s} \int_{\frac{1}{R_0}}^\infty \lambda^{-\left(\frac{n-1}{2}\right)}\: \left|\widehat{f}(\lambda)\right|\: {|{\bf c}(\lambda)|}^{-2}\: d\lambda \\
& \le & e^{-\rho R_0} {\left(\int_{\frac{1}{R_0}}^\infty {\left(\lambda^2 + \rho^2\right)}^\alpha {|\widehat{f}(\lambda)|}^2 {|{\bf c}(\lambda)|}^{-2} d\lambda\right)}^{1/2} {\left(\int_{\frac{1}{R_0}}^{\infty}\frac{{|{\bf c}(\lambda)|}^{-2}\: d\lambda}{{\left(\lambda^2 + \rho^2\right)}^\alpha \lambda^{n-1}} \right)}^{1/2} \\
& \lesssim & {\|f\|}_{H^\alpha(\mathcal S)}\: {\left(\int_{\frac{1}{R_0}}^{\infty}\frac{ d\lambda}{\lambda^{2 \alpha}}\right)}^{1/2}\:.
\end{eqnarray*}
The integral in the last line converges if and only if $\alpha > 1/2$. This completes the proof.
\end{proof}

For the lower regularity scenario ($1/4\le \alpha \le 1/2$), the analysis is more involved and the key idea is to make use of the oscillation of both $\varphi_\lambda$ and the Schr\"odinger multiplier.
\begin{lemma} \label{large_freq_lemma_annulus}
If $1/4\le\alpha\le 1/2$, then (\ref{linearized_maximal_estimate_annulus}) holds if $q\in [1,4]$.
\end{lemma}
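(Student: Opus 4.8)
The plan is to reduce the $L^q(A(R_0,R))$ bound to an $L^2(A(R_0,R))$ bound for $q\in[1,4]$ via Hölder (the annulus has finite measure, so it suffices to treat $q=4$), and then to handle $q=4$ by a $TT^*$-type argument using the refined Harish-Chandra series (\ref{anker_series_expansion}) together with the oscillatory integral estimate of Lemma \ref{oscillatory_integral_estimate}. First I would substitute the series (\ref{anker_series_expansion}) for $\varphi_\lambda(s)$ into the linearized operator $Tf(s)$ from (\ref{linearization}); on the annulus $R_0<s<R$ this is legitimate since the series converges uniformly on compacts away from the identity. This splits $Tf$ into a ``$+$'' piece and a ``$-$'' piece (from the ${\bf c}(\lambda)$ and ${\bf c}(-\lambda)$ terms), each of which is a sum over $\mu\ge 0$ of operators of the form
\begin{equation*}
A^{\pm}_\mu f(s)= 2^{-m_\z/2}A(s)^{-1/2}e^{-\mu s}\int_{1/R_0}^\infty \Gamma_\mu(\pm\lambda)\,{\bf c}(\pm\lambda)\,e^{\pm i\lambda s}\,e^{it(s)(\lambda^2+\rho^2)}\,\widehat f(\lambda)\,|{\bf c}(\lambda)|^{-2}\,d\lambda .
\end{equation*}
Since $A(s)^{-1/2}e^{-\mu s}\asymp e^{-(\rho+\mu)s}$ on the annulus and $|\Gamma_\mu(\pm\lambda)|\lesssim\mu^d(1+|\lambda|)^{-1}$ by (\ref{coefficient_estimate}), the factor $e^{-\mu s}\mu^d$ is summable in $\mu$ (uniformly for $s>R_0$), so it is enough to prove a bound, uniform in $\mu$, for the single-frequency-shell operator obtained by dropping $e^{-\mu s}\Gamma_\mu$ and replacing ${\bf c}(\pm\lambda)|{\bf c}(\lambda)|^{-2}$ by its modulus; and in fact since $|{\bf c}(\pm\lambda)|^2|{\bf c}(\lambda)|^{-4}\cdot|{\bf c}(\lambda)|^{-2}=|{\bf c}(\lambda)|^{-2}$ and $|\Gamma_\mu(\lambda){\bf c}(\lambda)|\lesssim \mu^d$ (because $|{\bf c}(\lambda)|\lesssim(1+|\lambda|)^{-1}\cdot(\text{bounded})$... more precisely one uses $|{\bf c}(\lambda)|\asymp |\lambda|\,(1+|\lambda|)^{-(n-1)/2}$), the weight that survives against $\widehat f$ is essentially $|{\bf c}(\lambda)|^{-2}$ times $(1+|\lambda|)^{-1}$-type factors. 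Writing $g(\lambda)=\widehat f(\lambda)(\lambda^2+\rho^2)^{\alpha/2}$, which has $\|g\|_{L^2(d\lambda)}\asymp\|f\|_{H^\alpha(\mathcal S)}$ since $|{\bf c}(\lambda)|^{-2}\asymp\lambda^2(1+|\lambda|)^{n-3}$ on $[1/R_0,\infty)$ differs from $d\lambda$ only by a power of $\lambda$ that I absorb, matters come down to showing
\begin{equation*}
\left\|\,\int_{1/R_0}^\infty e^{i\lambda s}\,e^{it(s)(\lambda^2+\rho^2)}\,(\lambda^2+\rho^2)^{-1/4}\,h(\lambda)\,d\lambda\,\right\|_{L^4_s(R_0,R)}\lesssim \|h\|_{L^2(d\lambda)},
\end{equation*}
at the critical exponent $\alpha=1/4$ (the case $1/4<\alpha\le1/2$ then follows a fortiori, as the extra decay $(\lambda^2+\rho^2)^{-(\alpha-1/4)}$ only helps).

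For this $L^4$ bound I would run the standard $TT^*$/Stein–Tomas-type argument: by duality and squaring, $\|F\|_{L^4}^2=\|F^2\|_{L^2}$, or more cleanly compute $\|Uh\|_{L^4}^4 = \|Uh\cdot\overline{Uh}\|_{L^2}^2$ and expand; this produces a fourfold $\lambda$-integral against the kernel $\int_{R_0}^R e^{i(\lambda_1+\lambda_2-\lambda_3-\lambda_4)s}e^{i(t(s))(\lambda_1^2+\lambda_2^2-\lambda_3^2-\lambda_4^2)}(\cdots)\,A(s)\,ds$, which is not obviously tractable because $t(s)$ is an arbitrary measurable function. The cleaner route, and the one I expect to use, is the classical reduction: prove instead the $L^2\to L^2$ bound for $UU^*$ where $U$ is the operator above regarded as mapping $L^2(d\lambda)\to L^2(A(R_0,R),A(s)\,ds)$; then $UU^*$ has kernel $K(s,s')=\int_{1/R_0}^\infty e^{i\lambda(s-s')}e^{i(t(s)-t(s'))(\lambda^2+\rho^2)}(\lambda^2+\rho^2)^{-1/2}\,d\lambda$, and by Lemma \ref{oscillatory_integral_estimate} with $\alpha=1/4$ appearing as the $(\lambda^2+\rho^2)^{-1/2}=((\lambda^2+\rho^2)^{-1/4})^2$ weight — wait, Lemma \ref{oscillatory_integral_estimate} gives exactly $|K(s,s')|\lesssim |s-s'|^{-1/2}+1$. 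Since the kernel $|s-s'|^{-1/2}$ on the bounded interval $(R_0,R)$ defines a bounded operator on $L^p$ for an appropriate range (fractional integration of order $1/2$: it maps $L^p\to L^q$ with $1/q=1/p-1/2$, hence $L^{4/3}\to L^4$ and dually $L^{4/3}\to L^4$), the operator with kernel $K$ maps $L^{4/3}(ds)\to L^4(ds)$. Combining $U\colon L^2(d\lambda)\to L^4(A(s)ds)$ from $\|Uh\|_{L^4}^2 = \|U^*(\text{stuff})\|\cdots$ — concretely, $\|Uh\|_{L^4}^2=\langle UU^*(|Uh|^2\operatorname{sgn}),Uh\rangle$-type manipulation shows $\|U\|_{L^2\to L^4}^2=\|UU^*\|_{L^{4/3}\to L^4}$, which is finite by the fractional integration bound. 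This is the heart of the matter.

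The main obstacle is precisely controlling the kernel $K(s,s')$ uniformly over all measurable $t(\cdot)\colon\R^+\to(0,1/\rho^2)$ — this is exactly what Lemma \ref{oscillatory_integral_estimate} was built to do, and it is the step where the localization of $t$ in an interval of length $\sim 1/\rho^2$ is used (the quadratic phase $t(s)(\lambda^2+\rho^2)$ must not oscillate too wildly relative to the linear phase $\lambda(s-s')$). A secondary technical point is the passage from the genuine $\varphi_\lambda$ to the single model operator $U$: one must check that after extracting $A(s)^{-1/2}e^{-\mu s}\Gamma_\mu(\pm\lambda){\bf c}(\pm\lambda)|{\bf c}(\lambda)|^{-2}$, the residual $\lambda$-multiplier is a bounded perturbation — i.e. that $\Gamma_\mu(\lambda){\bf c}(\lambda)$ and its relevant derivatives are $O(\mu^d)$ uniformly in $\lambda\ge 1/R_0$, which follows from (\ref{coefficient_estimate}), (\ref{hc}) and (\ref{c-fn_derivative_estimates}) — and that the $\mu$-sum converges, which is immediate from $\sum_\mu \mu^d e^{-\mu R_0}<\infty$. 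Finally, the reduction from general $q\in[1,4]$ to $q=4$ is trivial since $A(R_0,R)$ has finite Haar measure, so Hölder's inequality gives $\|Tf\|_{L^q}\lesssim\|Tf\|_{L^4}$ for all $q\le 4$.
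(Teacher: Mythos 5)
Your proposal follows the same architecture as the paper's proof (series expansion (\ref{anker_series_expansion}), separating the leading oscillatory piece, a $TT^*$ argument, and the oscillatory kernel bound from Lemma \ref{oscillatory_integral_estimate}), but there are two concrete slips that need to be fixed before it closes.

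First, the exponent bookkeeping in the $TT^*$ step is off by a factor of two, and you noticed it yourself but waved it away. Tracking the normalization correctly: setting $g(\lambda)=\widehat f(\lambda)(\lambda^2+\rho^2)^{\alpha/2}|{\bf c}(\lambda)|^{-1}$ gives $\|g\|_{L^2(d\lambda)}=\|f\|_{H^\alpha(\mathcal S)}$ on the nose, and the model operator $U$ then carries the multiplier $(\lambda^2+\rho^2)^{-\alpha/2}$ (times a unimodular factor ${\bf c}(\lambda)/|{\bf c}(\lambda)|$). At $\alpha=1/4$ this is $(\lambda^2+\rho^2)^{-1/8}$, not $(\lambda^2+\rho^2)^{-1/4}$ as you wrote; the $UU^*$ kernel then has $(\lambda^2+\rho^2)^{-1/4}$, which is precisely what Lemma \ref{oscillatory_integral_estimate} is built for. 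With your normalization the kernel would carry $(\lambda^2+\rho^2)^{-1/2}$, and Lemma \ref{oscillatory_integral_estimate} does not apply to that integrand as stated. The fix is purely arithmetic, but as written the key step does not actually invoke the lemma you cite.

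Second, your plan to run $TT^*$ term-by-term in $\mu$ and sum is in principle fine, but the justification you give is not. You claim that $\Gamma_\mu(\lambda){\bf c}(\lambda)$ and ``its relevant derivatives'' are controlled via (\ref{coefficient_estimate}), (\ref{hc}), (\ref{c-fn_derivative_estimates}); but (\ref{coefficient_estimate}) is only a pointwise bound and gives no information about $\partial_\lambda\Gamma_\mu$. Fortunately, derivative bounds are unnecessary for $\mu\ge 1$: the $(1+|\lambda|)^{-1}$ decay in (\ref{coefficient_estimate}) makes the corresponding $\lambda$-integrals absolutely convergent, so no integration by parts (and hence no Lemma \ref{oscillatory_integral_estimate}) is needed for those terms. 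This is exactly what the paper exploits by splitting off the $\mu\ge 1$ tail as an error operator $T_5$ and bounding it in $L^\infty(A(R_0,R))$ by $\|f\|_{H^0}$ via Cauchy--Schwarz; only the $\mu=0$ pieces $T_3,T_4$ (where $\Gamma_0\equiv 1$) go through $TT^*$ and Lemma \ref{oscillatory_integral_estimate}. Your route can be made to work, but the step where you assert the multiplier is ``a bounded perturbation'' needs to be replaced by the absolute-convergence observation. Finally, your use of one-dimensional Hardy--Littlewood--Sobolev (fractional integration of order $1/2$: $L^{4/3}\to L^4$) in place of the paper's Riesz-potential identity plus Pitt's inequality is a legitimate and slightly shorter way to finish the $L^{4/3}\to L^2$ bound for $P^*$.
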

To prove the above lemma  we invoke the  series expansion (\ref{anker_series_expansion}) of $\varphi_\lambda$. For $R_0 < s < R$ and $\lambda \ge 1/R_0$, using the series expansion  (\ref{anker_series_expansion}) and the estimate (\ref{coefficient_estimate}) on the coefficients $\Gamma_\mu$, we get
\begin{equation} \label{annulus_pf_eq1}
\varphi_\lambda (s) = 2^{-m_\z/2} {A(s)}^{-1/2} \left\{{\bf c}(\lambda)  e^{i \lambda s} + {\bf c}(-\lambda) e^{-i\lambda  s}\right\} + E_2(\lambda,s)\:,
\end{equation}
where
\begin{equation*}
E_2(\lambda,s) = 2^{-m_\z/2} {A(s)}^{-1/2} \left\{{\bf c}(\lambda)  \displaystyle\sum_{\mu=1}^\infty \Gamma_\mu(\lambda) e^{(i \lambda-\mu) s} + {\bf c}(-\lambda) \displaystyle\sum_{\mu=1}^\infty \Gamma_\mu(-\lambda) e^{-(i\lambda + \mu) s}\right\} \:,
\end{equation*}
and thus
\begin{equation} \label{annulus_pf_eq2}
\left|E_2(\lambda,s)\right| \lesssim {A(s)}^{-1/2} \left|{\bf c}(\lambda)\right| {(1+\lambda)}^{-1} \:.
\end{equation}
Hence for $R_0 < s < R$ and $\lambda \ge 1/R_0$, in accordance to (\ref{annulus_pf_eq1}), we decompose $T$ as,
\begin{equation*}
Tf(s)=T_3f(s)+T_4f(s)+T_5f(s),
\end{equation*}
where
\begin{eqnarray*}
T_3f(s)&=& 2^{-m_\z/2} {A(s)}^{-1/2} \bigintssss_{\frac{1}{R_0}}^\infty  {\bf c}(\lambda) \: e^{i\left\{\lambda s + t(s)\left(\lambda^2 + \rho^2\right)\right\}} \:\widehat{f}(\lambda)\: {|{\bf c}(\lambda)|}^{-2}\: d\lambda\:, \\
T_4f(s)&=& 2^{-m_\z/2} {A(s)}^{-1/2} \bigintssss_{\frac{1}{R_0}}^\infty  {\bf c}(-\lambda) \: e^{i\left\{-\lambda s + t(s)\left(\lambda^2 + \rho^2\right)\right\}} \:\widehat{f}(\lambda)\: {|{\bf c}(\lambda)|}^{-2}\: d\lambda\:, \\
T_5f(s)&=& \bigintssss_{\frac{1}{R_0}}^\infty E_2(\lambda,s) \:e^{it(s)\left(\lambda^2 + \rho^2\right)}\:\widehat{f}(\lambda)\: {|{\bf c}(\lambda)|}^{-2}\: d\lambda\:.
\end{eqnarray*}
\begin{remark}
We note that for the smallest value of $\alpha$, that is, $\alpha=1/4$, the proposed largest value of $q$ in Theorem \ref{theorem} satisfies the inequality
\begin{equation}\label{endpoint}
\frac{2n}{n-2\alpha}\le 4,\:\:\:\:\:\:\:\:n\geq 1.
\end{equation}
In view of Lemma \ref{triviality} and (\ref{endpoint}), it is therefore enough to prove the following estimates at the end-points, to complete the proof of Lemma \ref{large_freq_lemma_annulus}.
\end{remark}
\begin{lemma} \label{large_freq_lemma_annulus_pieces}
\begin{itemize}
\item[(a)] For $j=3,4$, we have the inequalities
\begin{equation*}
{\|T_jf\|}_{L^4\left(A(R_0,R)\right)} \lesssim {\|f\|}_{H^{1/4}(\mathcal S)}\:.
\end{equation*}
\item[(b)] For $T_5$, we have the inequality
\begin{equation*}
{\|T_5f\|}_{L^\infty\left(A(R_0,R)\right)} \lesssim {\|f\|}_{H^0(\mathcal S)}\:.
\end{equation*}
\end{itemize}
\end{lemma}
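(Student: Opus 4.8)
Here is how I would proceed.

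\emph{Proof strategy.} I would first dispatch part (b), since it needs nothing beyond the pointwise bound (\ref{annulus_pf_eq2}) on $E_2$. For $s\in(R_0,R)$ the density estimate (\ref{density_function}) gives $A(s)^{-1/2}\asymp s^{-(n-1)/2}\lesssim 1$, so (\ref{annulus_pf_eq2}) together with $|{\bf c}(\lambda)|\cdot{|{\bf c}(\lambda)|}^{-2}={|{\bf c}(\lambda)|}^{-1}$ yields
\begin{equation*}
|T_5f(s)|\lesssim\int_{1/R_0}^\infty\frac{|\widehat f(\lambda)|\,{|{\bf c}(\lambda)|}^{-1}}{1+\lambda}\,d\lambda\le{\left(\int_{1/R_0}^\infty|\widehat f(\lambda)|^2{|{\bf c}(\lambda)|}^{-2}\,d\lambda\right)}^{1/2}{\left(\int_{1/R_0}^\infty\frac{d\lambda}{(1+\lambda)^2}\right)}^{1/2}\lesssim{\|f\|}_{H^0(\mathcal S)}
\end{equation*}
by Cauchy--Schwarz, pairing $|\widehat f(\lambda)|\,{|{\bf c}(\lambda)|}^{-1}$ against $(1+\lambda)^{-1}$. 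Taking the supremum over $s\in(R_0,R)$ proves (b); observe that no decay of the data beyond $L^2$ enters, which is why $T_5$ never feels the threshold $\alpha=1/4$.

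The plan for (a) is a $TT^*$ argument that channels everything into Lemma \ref{oscillatory_integral_estimate}; I will carry out $T_3$ in detail, the case of $T_4$ being identical. On $A(R_0,R)$ one has $A(s)^{-1/2}\lesssim 1$, and ${\bf c}(\lambda){|{\bf c}(\lambda)|}^{-2}=\bigl({\bf c}(\lambda)/|{\bf c}(\lambda)|\bigr){|{\bf c}(\lambda)|}^{-1}$ with unimodular first factor. Set $G(\lambda):={(\lambda^2+\rho^2)}^{1/8}{|{\bf c}(\lambda)|}^{-1}\widehat f(\lambda)$, so that ${\|G\|}_{L^2(1/R_0,\infty)}\le{\|f\|}_{H^{1/4}(\mathcal S)}$, and $m(\lambda):={\bf c}(\lambda)\bigl(|{\bf c}(\lambda)|{(\lambda^2+\rho^2)}^{1/8}\bigr)^{-1}$, so $|m(\lambda)|={(\lambda^2+\rho^2)}^{-1/8}$. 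Then $|T_3f(s)|\lesssim|VG(s)|$ on $(R_0,R)$, where
\begin{equation*}
VG(s):=\int_{1/R_0}^\infty m(\lambda)\,e^{i\{\lambda s+t(s)(\lambda^2+\rho^2)\}}\,G(\lambda)\,d\lambda,
\end{equation*}
and since also $A(s)\lesssim 1$ on the annulus it suffices to prove ${\|VG\|}_{L^4(R_0,R)}\lesssim{\|G\|}_{L^2}$. By duality this reads $|\langle VG,h\rangle|\lesssim{\|G\|}_{L^2}{\|h\|}_{L^{4/3}(R_0,R)}$ for all $h$; moving the $\lambda$-integral outside (Fubini is legitimate since $G$ is rapidly decreasing and $h\in L^1$) and applying Cauchy--Schwarz in $\lambda$,
\begin{equation*}
|\langle VG,h\rangle|^2\le{\|G\|}_{L^2}^2\int_{1/R_0}^\infty\frac{1}{{(\lambda^2+\rho^2)}^{1/4}}{\left|\int_{R_0}^R e^{i\{\lambda s+t(s)(\lambda^2+\rho^2)\}}\,\overline{h(s)}\,ds\right|}^2d\lambda.
\end{equation*}
Expanding the modulus squared and interchanging the order of integration --- which I would justify by truncating the $\lambda$-integral to $(1/R_0,N)$, where Fubini applies, using that the proof of Lemma \ref{oscillatory_integral_estimate} gives the same bound with constant independent of the upper limit $N$, and letting $N\to\infty$ by monotone convergence --- turns the last factor into
\begin{equation*}
\int_{R_0}^R\int_{R_0}^R\overline{h(s)}\,h(s')\left(\int_{1/R_0}^\infty\frac{e^{i\{\lambda(s-s')+(t(s)-t(s'))(\lambda^2+\rho^2)\}}}{{(\lambda^2+\rho^2)}^{1/4}}\,d\lambda\right)ds\,ds',
\end{equation*}
and Lemma \ref{oscillatory_integral_estimate} bounds the inner integral by $C({|s-s'|}^{-1/2}+1)$.

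It then remains to check that the positive operator $Kg(s):=\int_{R_0}^R({|s-s'|}^{-1/2}+1)\,g(s')\,ds'$ maps $L^{4/3}(R_0,R)$ into $L^4(R_0,R)$: the ``$+1$'' part lands in the constants, hence in $L^4(R_0,R)$, by H\"older, and the ${|s-s'|}^{-1/2}$ part is the one-dimensional Riesz potential $I_{1/2}$, which maps $L^{4/3}(\R)\to L^4(\R)$ by Hardy--Littlewood--Sobolev since $\tfrac14=\tfrac34-\tfrac12$ --- and it is precisely this identity that forces the critical exponent $q=4$, in harmony with (\ref{endpoint}) and the statement of Theorem \ref{theorem}. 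Hence $|\langle VG,h\rangle|^2\lesssim{\|G\|}_{L^2}^2\,{\|K|h|\|}_{L^4}{\|h\|}_{L^{4/3}}\lesssim{\|G\|}_{L^2}^2\,{\|h\|}_{L^{4/3}}^2$, giving (a) for $j=3$. For $j=4$ the argument repeats verbatim: ${\bf c}(-\lambda)=\overline{{\bf c}(\lambda)}$ keeps the prefactor unimodular, the phase becomes $-\lambda s+t(s)(\lambda^2+\rho^2)$ so the $TT^*$-kernel carries $e^{i\lambda(s'-s)}$ instead of $e^{i\lambda(s-s')}$, and Lemma \ref{oscillatory_integral_estimate} (which is symmetric in $s,s'$ up to conjugation) applies as before. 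The step I expect to be the real obstacle is this $TT^*$ bookkeeping: verifying that the $H^{1/4}$ weight, the ${|{\bf c}(\lambda)|}^{\pm1}$ factors, and the Cauchy--Schwarz/expansion step combine to leave exactly the weight ${(\lambda^2+\rho^2)}^{-1/4}$ that Lemma \ref{oscillatory_integral_estimate} is tailored to handle --- together with the limiting argument needed to make sense of the non-absolutely convergent oscillatory integral.
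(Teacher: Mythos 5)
Your proof is correct and follows essentially the same route as the paper: part (b) by the error-term bound and Cauchy--Schwarz, and part (a) by reducing to an $L^4$--$L^2$ bound for the oscillatory operator, dualizing, running a $TT^*$ expansion, and invoking Lemma \ref{oscillatory_integral_estimate} to produce the kernel $|s-s'|^{-1/2}+1$. The only deviation is cosmetic: where the paper identifies the $|s-s'|^{-1/2}$ kernel as the Riesz potential $I_{1/2}$, passes to the Fourier side via the identity (\ref{riesz_identity}), and then applies Pitt's inequality (\ref{Pitt's_ineq}) with $\alpha=1/4$, $p=4/3$, $\alpha_1=0$, you apply the Hardy--Littlewood--Sobolev bound $I_{1/2}\colon L^{4/3}(\R)\to L^4(\R)$ directly; these two closings are equivalent (Pitt with $\alpha_1=0$ is, by the Riesz identity and duality, precisely this HLS estimate), and yours is marginally more economical since it bypasses the Fourier-transform intermediary. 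Your simplification of dropping the $A(s)^{-1/4}$ factor on the annulus (since $A\asymp 1$ on $(R_0,R)$) is harmless --- the paper carries the weight $s^{-(n-1)/4}$ through Pitt's inequality, but it plays no role there either --- and your Fubini/truncation justification for interchanging the $\lambda$ and $(s,s')$ integrations is a careful addition that the paper leaves implicit.
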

\begin{proof}
We first prove part $(a)$. As the proofs for $T_3$ and $T_4$ are analogous in nature, we will concentrate only on $T_3$. We need to prove the inequality
\begin{equation} \label{annulus_pf_eq3}
{\left(\int_{R_0}^R {\left|T_3f(s)\right|}^4\:A(s)\:ds\right)}^{1/4} \lesssim {\left(\int_{\frac{1}{R_0}}^\infty {\left(\lambda^2 + \rho^2\right)}^{1/4} {|\widehat{f}(\lambda)|}^2\: {|{\bf c}(\lambda)|}^{-2} \:d\lambda\right)}^{1/2}\:.
\end{equation}
Using the definition of $T_3$ we get
\begin{eqnarray*}
&&T_3f(s) A(s)^{\frac{1}{4}}\\
&=&  2^{-\frac{m_\z}{2}} {A(s)}^{-\frac{1}{4}} \bigintssss_{\frac{1}{R_0}}^\infty  {\bf c}(\lambda) \: e^{i\left\{\lambda s + t(s)\left(\lambda^2 + \rho^2\right)\right\}} \:\widehat{f}(\lambda)\: {|{\bf c}(\lambda)|}^{-2}\: d\lambda \\
&=& 2^{-\frac{m_\z}{2}} {A(s)}^{-\frac{1}{4}} \bigintssss_{\frac{1}{R_0}}^\infty  {\bf c}(\lambda) \: e^{i\left\{\lambda s + t(s)\left(\lambda^2 + \rho^2\right)\right\}} \:g(\lambda) {\left(\lambda^2 + \rho^2\right)}^{-\frac{1}{8}} {|{\bf c}(\lambda)|}^{-1}\: d\lambda \:,
\end{eqnarray*}
where
\begin{equation*}
g(\lambda)= \widehat{f}(\lambda)\: {\left(\lambda^2 + \rho^2\right)}^{1/8}\:{|{\bf c}(\lambda)|}^{-1}\:.
\end{equation*}
We now define
\begin{equation*}
Pg(s):= {A(s)}^{-1/4} \bigintssss_{\frac{1}{R_0}}^\infty  {\bf c}(\lambda) \: e^{i\left\{\lambda s + t(s)\left(\lambda^2 + \rho^2\right)\right\}} \:g(\lambda) {\left(\lambda^2 + \rho^2\right)}^{-1/8} {|{\bf c}(\lambda)|}^{-1}\: d\lambda \:,
\end{equation*}
so that,
\begin{equation*}
2^{m_\z/2}\: T_3f(s) A(s)^{1/4} = Pg(s)\:,
\end{equation*}
It follows that proving (\ref{annulus_pf_eq3}) is equivalent to proving
\begin{equation} \label{annulus_pf_eq4}
{\left(\int_{R_0}^R {\left|Pg(s)\right|}^4\:ds\right)}^{1/4} \lesssim {\left(\int_{\frac{1}{R_0}}^\infty  {|g(\lambda)|}^2 \:d\lambda\right)}^{1/2}\:.
\end{equation}
Now for $\eta \in C_c^\infty(R_0,R)$ and $\lambda \in [1/R_0,\infty)$, setting
\begin{equation*}
P^*\eta(\lambda)= \overline{{\bf c}(\lambda)}\:{\left(\lambda^2 + \rho^2\right)}^{-\frac{1}{8}}{|{\bf c}(\lambda)|}^{-1} \int_{R_0}^R {A(s)}^{-1/4}\: e^{-i\left\{\lambda s + t(s)\left(\lambda^2 + \rho^2\right)\right\}} \eta(s)\: ds\:,
\end{equation*}
it is easy to see that the relation
\begin{equation*}
\int_{R_0}^R P\psi(s) \: \overline{\eta(s)}\: ds = \int_{\frac{1}{R_0}}^\infty \psi(\lambda)\: \overline{P^*\eta(\lambda)}\: d\lambda \:,
\end{equation*}
holds for all $\eta \in C_c^\infty(R_0,R)$ and $\psi \in L^2[1/R_0,\:\infty)$ having suitable decay at infinity. Using duality it follows that to prove (\ref{annulus_pf_eq4}), it suffices to prove the following inequality
\begin{equation} \label{annulus_pf_eq5}
{\left(\int_{\frac{1}{R_0}}^\infty {\left|P^* h(\lambda)\right|}^2\: d\lambda\right)}^{1/2} \lesssim {\left(\int_{R_0}^R {\left|h(s)\right|}^{4/3}\:ds\right)}^{3/4}\:,
\end{equation}
for all $h \in C_c^\infty(R_0,R)$. Now,
\begin{eqnarray*}
&& \int_{\frac{1}{R_0}}^\infty {\left|P^* h(\lambda)\right|}^2\: d\lambda \\
&=& \int_{\frac{1}{R_0}}^\infty P^* h(\lambda)\:\overline{P^* h(\lambda)}\: d\lambda \\
&=& \int_{\frac{1}{R_0}}^\infty \left[\overline{{\bf c}(\lambda)}\:{\left(\lambda^2 + \rho^2\right)}^{-\frac{1}{8}}{|{\bf c}(\lambda)|}^{-1} \int_{R_0}^R {A(s)}^{-1/4}\: e^{-i\left\{\lambda s + t(s)\left(\lambda^2 + \rho^2\right)\right\}} h(s)\: ds \right] \\
&& \times \left[{\bf c}(\lambda)\:{\left(\lambda^2 + \rho^2\right)}^{-\frac{1}{8}}{|{\bf c}(\lambda)|}^{-1} \int_{R_0}^R {A(s')}^{-1/4}\: e^{i\left\{\lambda s' + t(s')\left(\lambda^2 + \rho^2\right)\right\}} \overline{h(s')}\: ds' \right] \: d\lambda \\
&=& \int_{R_0}^R \int_{R_0}^R I(s,s')\:{A(s)}^{-\frac{1}{4}}\:{A(s')}^{-\frac{1}{4}}\:h(s)\:\overline{h(s')}\:ds\:ds'\:,
\end{eqnarray*}
where
\begin{equation*}
I(s,s')= \bigintssss_{\frac{1}{R_0}}^\infty \frac{e^{i\left\{\lambda (s'-s) + (t(s')-t(s))\left(\lambda^2 + \rho^2\right)\right\}}}{{\left(\lambda^2 + \rho^2\right)}^{\frac{1}{4}}} d\lambda \:.
\end{equation*}
By Lemma \ref{oscillatory_integral_estimate} and the estimate (\ref{density_function}) of $A(s)$ it follows that
\begin{eqnarray*}
&& \int_{\frac{1}{R_0}}^\infty {\left|P^* h(\lambda)\right|}^2\: d\lambda \\
&\lesssim & \int_{R_0}^R \int_{R_0}^R \left(\frac{1}{{|s-s'|}^{\frac{1}{2}}} + 1\right)\:|h(s)|\:|h(s')|\:s^{-\left(\frac{n-1}{4}\right)}\:{(s')}^{-\left(\frac{n-1}{4}\right)}\:ds\:ds' \\
& \le & \int_{R_0}^R \int_{R_0}^R \frac{|h(s)||h(s')|}{{|s-s'|}^{\frac{1}{2}}}\:s^{-\left(\frac{n-1}{4}\right)}\:{(s')}^{-\left(\frac{n-1}{4}\right)}\:ds\:ds' + \sqrt{R}R^{-\left(\frac{n-1}{2}\right)}_0\: {\|h\|}^2_{L^{4/3}(R_0,R)}\:.
\end{eqnarray*}
As $h \in C^\infty_c(R_0,R)$, we can think of it as an even $C^\infty_c$ function supported in $(-R,-R_0) \sqcup (R_0,R)$. We can therefore write the last integral as an one dimensional Riesz potential and apply (\ref{riesz_identity}) to get that for some positive number $c$,
\begin{eqnarray*}
&& \int_{R_0}^R \int_{R_0}^R \frac{|h(s)||h(s')|}{{|s-s'|}^{1/2}}\:s^{-\left(\frac{n-1}{4}\right)}\:{(s')}^{-\left(\frac{n-1}{4}\right)}\:ds\:ds' \\
&=& \int_{0}^\infty \int_{0}^\infty \frac{|h(s)||h(s')|}{{|s-s'|}^{1/2}}\:s^{-\left(\frac{n-1}{4}\right)}\:{(s')}^{-\left(\frac{n-1}{4}\right)}\:ds\:ds' \\
&=& c\int_{0}^\infty I_{1/2} \left({(s')}^{-\left(\frac{n-1}{4}\right)} |h|\right)(s)\:s^{-\left(\frac{n-1}{4}\right)}\:|h(s)|\:ds \\
&=& c \int_{\R} {|\xi|}^{-\frac{1}{2}}\: {\left|{\left(s^{-\left(\frac{n-1}{4}\right)}|h|\right)}^{\sim}(\xi)\right|}^2\: d\xi \:.
\end{eqnarray*}
We now want to apply Pitt's inequality (\ref{Pitt's_ineq}) to the function
\begin{equation*}
x\mapsto |x|^{-\frac{n-1}{4}}|h(x)|,\:\:\:\:\:\:x\in\R,
\end{equation*}
for $\alpha=\frac{1}{4}$ and $p=\frac{4}{3}$. In this case it turns out that $\alpha_1=0$. Hence by Pitt's inequality we obtain
\begin{eqnarray*}
 \int_{\R} {|\xi|}^{-\frac{1}{2}}\: {\left|{\left(s^{-\left(\frac{n-1}{4}\right)}|h|\right)}^{\sim}(\xi)\right|}^2\: d\xi
& \lesssim & {\left(\int_{\R} {|h(x)|}^{\frac{4}{3}}\: {|x|}^{-\left(\frac{n-1}{3}\right)}\: dx\right)}^{3/2} \\
&=& c {\left(\int_{R_0}^R {|h(s)|}^{\frac{4}{3}}\: s^{-\left(\frac{n-1}{3}\right)}\: ds\right)}^{3/2} \\
&\le & c \:R^{-\left(\frac{n-1}{2}\right)}_0\: {\|h\|}^2_{L^{4/3}(R_0,R)}\:.
\end{eqnarray*}
This completes the proof of part $(a)$.

The proof of part $(b)$ uses the error term estimate (\ref{annulus_pf_eq2}), which is of critical importance here. Indeed, using (\ref{annulus_pf_eq2}) along with the Cauchy-Schwarz inequality, it follows that for all $s\in (R_0,R)$,
\begin{eqnarray*}
|T_5f(s)| &\lesssim & {A(s)}^{-1/2} \int_{\frac{1}{R_0}}^\infty \lambda^{-1} \: \left|\widehat{f}(\lambda)\right|\:{\left|{\bf c}(\lambda)\right|}^{-1}\: d\lambda \\
& \lesssim & R^{-\left(\frac{n-1}{2}\right)}_0\: {\|f\|}_{H^0(\mathcal S)}\: {\left(\int_{\frac{1}{R_0}}^\infty \lambda^{-2}d\lambda\right)}^{1/2} \\
& \lesssim &  {\|f\|}_{H^0(\mathcal S)}\:.
\end{eqnarray*}
This completes the proof of Lemma \ref{large_freq_lemma_annulus_pieces} and also of Lemma \ref{large_freq_lemma_annulus}.
\end{proof}
The proof of Lemma \ref{large_freq_lemma} finally follows from that of Lemmata \ref{large_freq_lemma_ball}, \ref{triviality} and \ref{large_freq_lemma_annulus}.
\subsection{Proof of the sufficient conditions of Theorem \ref{theorem}:} Let $f \in \mathscr{S}^2(\mathcal S)_{o}$. Then $\widehat{f} \in \mathscr{S}(\R)_{e}$. Now let us choose an auxiliary non-negative even function $\psi \in C^\infty_c(\R)$ such that $\text{Supp}(\psi) \subset \{\xi: \frac{1}{2} < |\xi| < 2\}$ and
\begin{equation*}
\displaystyle\sum_{k=-\infty}^\infty \psi(2^{-k} \xi)=1\:,\:\:\:\: \xi \ne 0\:.
\end{equation*}
We set
\begin{equation*}
\psi_1(\xi):= \displaystyle\sum_{k=-\infty}^0 \psi(2^{-k} \xi) \:,\:\:\:\:\:\: \psi_2(\xi):= \displaystyle\sum_{k=1}^\infty \psi(2^{-k} \xi) \:.
\end{equation*}
Then both $\psi_1$ and $\psi_2$ are even non-negative smooth functions with $\text{Supp}(\psi_1) \subset (-2,2)$, $\text{Supp}(\psi_2) \subset \R \setminus (-1,1)$ and $\psi_1+\psi_2 \equiv 1$. Then we consider,
\begin{equation*}
\widehat{f} = \widehat{f} \psi_1 + \widehat{f} \psi_2\:.
\end{equation*}
As $\widehat{f} \psi_j\in \mathscr{S}(\R)_{e}$, for $j=1,2$, by the Schwartz isomorphism theorem, there exist $\{f_1,f_2\}\subset \mathscr{S}^2(\mathcal S)_{o}$ such that $\widehat{f_j}=\widehat{f} \psi_j$. Now if we can prove the estimates
\begin{equation*}
{\|Tf_j\|}_{L^{q_j}\left(B_R\right)} \lesssim {\|f_j\|}_{H^{\alpha_j}(\mathcal S)}\:,\:\:\:\:j=1,2\:,
\end{equation*}
then it follows that
\begin{eqnarray} \label{thm_pf_eq1}
{\|Tf\|}_{L^{\min\{q_1,q_2\}}\left(B_R\right)} & \lesssim & {\|Tf_1\|}_{L^{q_1}\left(B_R\right)} + {\|Tf_2\|}_{L^{q_2}\left(B_R\right)} \nonumber\\
& \lesssim & {\|f_1\|}_{H^{\alpha_1}(\mathcal S)} + {\|f_2\|}_{H^{\alpha_2}(\mathcal S)} \nonumber\\
& \le & 2\:{\|f\|}_{H^{\max\{\alpha_1, \alpha_2\}}(\mathcal S)}\:.
\end{eqnarray}
Then noting that $(1,\infty) \subset [\frac{1}{R_0},\infty)$, as $R_0>2$, in view of (\ref{thm_pf_eq1}) the sufficient conditions of Theorem \ref{theorem} follow from Lemmata \ref{small_freq_lemma} and \ref{large_freq_lemma}.
\qed

\section{Sharpness at the end-points}
In this section, we prove the sharpness at the endpoints of Theorem \ref{theorem}. Here we will mainly work on $B_{R_0}$ and hence the expression of $\varphi_\lambda$ along with the estimates of the error term $E_1$ given in (\ref{ball_pf_eq1}) and (\ref{ball_pf_eq2}) will be repeatedly used.

\begin{proof}[Proof of the necessary conditions of Theorem \ref{theorem}:] We divide the proof of necessity of the conditions in three different cases based on the values of the Sobolev exponent $\alpha$.

\medskip
\noindent
{\bf Case 1}: {\em If $\alpha < 1/4$ then the local estimate (\ref{maximal_estimate}) does not hold for $q=1$}.

\medskip
\noindent
It suffices to prove that if $\alpha<1/4$, then the following inequality for the linearized maximal function $T$ does not hold true
\begin{equation*}
{\|Tf\|}_{L^1\left(B_1\right)} \lesssim {\|f\|}_{H^{\alpha}(\mathcal S)}\:.
\end{equation*}

We first recall the Euclidean counter-example formulated by Sj\"olin as it is relevant for us. In \cite[pp. 55-58]{Sjolin2}, the author considered an even, non-negative $C_c^\infty$ function $\phi$ on $\R$ with $\text{Supp}(\phi) \subset (-1,1) $ and then constructed $\{f_N\}_{N\in\N} \subset \mathscr{S}(\R^n)_{o}$ such that,
\begin{equation} \label{sharpness_eq1}
\mathscr{F}f_N(\lambda)= N^{-1/2}\: \phi(-N^{-1/2} \lambda + N^{1/2})\: \lambda^{-\left(\frac{n-1}{2}\right)},\:\:\:\:\:\:\:\lambda\in\R^+,
\end{equation}
and then proved that
\begin{enumerate}
\item[(i)] $\displaystyle{\text{Supp}(\mathscr{F}(f_N)) \subset \left[N-\sqrt{N},N+\sqrt{N}\right]}$,
\item[(ii)] $\displaystyle{{\|f_N\|}_{H^\alpha(\R^n)} \lesssim N^{\alpha -\frac{1}{4}}}\:$, for all $N\in\N$,
\item[(iii)] For $\varepsilon >0$, small and for all $N$ sufficiently large, there exists a positive number $c$ (independent of $N$) such that $T_0$ satisfies the inequality
\begin{equation}\label{spestimate}
\left|T_0 f_N(x)\right| \ge c\:,\:\:\:\:\:\:\:\:\:\:\varepsilon\le \|x\|\le 2\varepsilon.
\end{equation}
\end{enumerate}

We now define a sequence of functions $\{g_N\}_{N\in\N}\subset\mathscr{S}^2(\mathcal S)_{o}$ by
\begin{equation*}
g_N:= (\mathscr{A}^{-1} \circ \mathcal{M}^{-1})f_N,
\end{equation*}
where $\mathscr{A}$ and $\mathcal{M}$ are as in the proof of Lemma \ref{schwartz_correspondence}. The key idea here is that since $\mathscr{F}(f_N)$ is supported away from the origin and the spacial estimate (\ref{spestimate}) was done in a small annulus centered at the origin, as in the proof of Lemma \ref{large_freq_lemma_ball_piece1}, we can again implement the series expansion of $\varphi_\lambda$ described in Lemma \ref{bessel_series_expansion}, the Abel transform and the map $\mathfrak{m}$ to form a connection between $\mathscr{S}(\R^n)_{o}$ and $\mathscr{S}^2(\mathcal S)_{o}$.

Using the expression of $\varphi_\lambda$ given in (\ref{ball_pf_eq1}) and for $\varepsilon \le s \le 2 \varepsilon<R_0$, we write
\begin{eqnarray} \label{sharpness_eq2}
Tg_N(s) &=& c_0 {\left(\frac{s^{n-1}}{A(s)}\right)}^{1/2} \int_{0}^\infty \J_{\frac{n-2}{2}}(\lambda s)\:e^{it(s)\left(\lambda^2 + \rho^2\right)}\:\widehat{g_N}(\lambda)\: {|{\bf c}(\lambda)|}^{-2}\: d\lambda \nonumber\\
&& + \int_{0}^\infty E_1(\lambda, s)\:e^{it(s)\left(\lambda^2 + \rho^2\right)}\:\widehat{g_N}(\lambda)\: {|{\bf c}(\lambda)|}^{-2}\: d\lambda \nonumber\\
&=& T_1 g_N(s) + T_2 g_N(s)\:.
\end{eqnarray}
Then using (\ref{sharpness_eq1}) and proceeding exactly as in the proof of Lemma \ref{large_freq_lemma_ball_piece1}, we get
\begin{itemize}
\item[(a)]
$\displaystyle{\widehat{g_N}(\lambda)=\frac{\lambda^{n-1}}{{|{\bf c}(\lambda)|}^{-2}}\mathscr{F}f_N(\lambda)=\frac{\lambda^{n-1}}{{|{\bf c}(\lambda)|}^{-2}} N^{-1/2} \phi(-N^{-1/2} \lambda + N^{1/2}) \lambda^{-\left(\frac{n-1}{2}\right)}}\:,$
\item[(b)] $\displaystyle{\text{Supp}(\widehat{g_N}) \subset \left[N-\sqrt{N},N+\sqrt{N}\right]}$,
\item[(c)] $\displaystyle{{\|g_N\|}_{H^\alpha(\mathcal S)} \asymp {\|f_N\|}_{H^\alpha(\R^n)}   \lesssim N^{\alpha -\frac{1}{4}}}$,
\item[(d)] For $\varepsilon >0$, small and for all $N\in\N$ sufficiently large, there exists a positive number $c'$ (independent of $N$), such that
\begin{equation*}
\left|T_1 g_N(s)\right| \ge c'\:,\:\:\:\:\:\:\:\:\text{for all $s\in [\varepsilon, 2\varepsilon]$.}
\end{equation*}
\end{itemize}
We note that it follows from $(c)$ above that if $\alpha < 1/4$ then
\begin{equation} \label{sharpness_eq3}
\lim_{N\to\infty}{\|g_N\|}_{H^\alpha(\mathcal S)}=0.
\end{equation}
Next, we show that for all large $N\in\N$
\begin{equation}\label{sharpness_eq4}
|T_2g_N(s)|\lesssim 1/N,\:\:\:\:\:\:\:\:\text{for all $s\in [\varepsilon, 2\varepsilon]$.}
\end{equation}
Then combining (\ref{sharpness_eq2}), (d) and (\ref{sharpness_eq4}), we get that for $\varepsilon >0$, small and $N$ sufficiently large, there exists $c''>0$, such that
\begin{equation} \label{sharpness_eq5}
\left|Tg_N(s)\right| \ge c''\:,\:\:\:\:\:\:\:\text{for all $s\in [\varepsilon, 2\varepsilon]$.  }
\end{equation}
Then in view of (\ref{sharpness_eq3}) and (\ref{sharpness_eq5}) the claim of {\bf Case 1} follows. It now remains to prove the inequality (\ref{sharpness_eq4}). Now, for the chosen small $\varepsilon>0$, we can get $N_0\in\N$ such that for all $N\geq N_0$, $1/\varepsilon$ is strictly smaller than $N-\sqrt N$. Hence for all $s\in [\varepsilon, 2\varepsilon]$
\begin{equation*}
s(N-\sqrt N)>1,\:\:\:\:N\geq N_0.
\end{equation*}
From this and the estimate of the error term given in (\ref{ball_pf_eq2}) it follows that for all $\lambda\in (N-\sqrt N, N+\sqrt N)$,
\begin{equation*}
|E_1(\lambda,s)|\lesssim s^2(\lambda s)^{-\left(\frac{n+1}{2}\right)},\:\:\:\:\:s\in [\varepsilon, 2\varepsilon].
\end{equation*}
It now follows from the formula for $T_2$ and $(a)$, $(b)$ that
\begin{eqnarray*}
|T_2\:g_N(s)| &\lesssim & \int_{N-\sqrt{N}}^{N+\sqrt{N}}  s^2 {(\lambda s)}^{-\left(\frac{n+1}{2} \right)}\:\left|\widehat{g_N}(\lambda)\right|\: {|{\bf c}(\lambda)|}^{-2}\: d\lambda \nonumber\\
& \le & \frac{{\|\phi\|}_{\infty}}{s^{\left(\frac{n-3}{2} \right)}}\int_{N-\sqrt{N}}^{N+\sqrt{N}}  \lambda^{-\left(\frac{n+1}{2}\right)}\: N^{-\frac{1}{2}}\: \lambda^{-\left(\frac{n-1}{2}\right)} \: \lambda^{n-1}\: d\lambda \nonumber\\
&\lesssim &C_\varepsilon   N^{-\frac{1}{2}} \int_{N-\sqrt{N}}^{N+\sqrt{N}} \frac{d\lambda}{\lambda} \nonumber\\
&\lesssim & 1/N\:,
\end{eqnarray*}
where the last inequality follows from the elementary estimate
\begin{equation*}
\log\left(\frac{x+1}{x-1}\right) \lesssim \frac{1}{x}\:,
\end{equation*}
which is valid for all large $x$. This proves (\ref{sharpness_eq4}).

\medskip
{\bf Case 2}: {\em If $1/4 \le \alpha < n/2$, then the condition $q \le 2n/(n-2\alpha)$ is necessary for validity of the inequality (\ref{maximal_estimate}).}

\medskip
\noindent
Let $\psi \in C^\infty_c(\R)_{e}$ be (non-zero) non-negative with $\text{Supp}(\psi) \subset (1,2)$\:. Then for each  $N \in \N$, there exists $f_N\in \mathscr{S}(\R^n)_{o}$ such that \begin{equation*}
\mathscr{F}f_N(\lambda)=\psi(\lambda/N)\:,\:\:\:\:\:\:\lambda\in [0,\infty)\:.
\end{equation*}
We note that
\begin{equation}\label{f1}
f_1(0)=\int_0^{\infty}\psi(\lambda)\lambda^{n-1}d\lambda=C_{\psi}>0\:,
\end{equation}
and by the Euclidean Fourier inversion
\begin{equation*}
f_N(s)=N^nf_1(Ns),\:\:\:\:\:\:s\in [0,\infty).
\end{equation*}
Therefore, by (\ref{f1}) there exists $\varepsilon\in (0,1/2)$, such that
\begin{equation}\label{fn}
|f_N(s)|>\frac{C_{\psi}}{2}N^n\:,\:\:\:\:\:\:\text{for all, $s\in [0, \varepsilon/N)$.}
\end{equation}
We now define as before $g_N=(\mathscr{A}^{-1} \circ \mathcal{M}^{-1})f_N\in \mathscr{S}^2(\mathcal S)_{o}$.
It follows that
\begin{eqnarray} \label{sharpness_eq6}
{\|g_N\|}_{H^\alpha(\mathcal S)} &=& {\left(\int_0^\infty {\left(\lambda^2 + \rho^2\right)}^\alpha \psi\left(\lambda/N\right)^2 \:\frac{\lambda^{2(n-1)}}{{|{\bf c}(\lambda)|}^{-4}}\:{|{\bf c}(\lambda)|}^{-2} d\lambda\right)}^{1/2} \nonumber\\
& \asymp & N^{\alpha +\frac{n}{2}} {\left(\int_1^2 \psi(\lambda)^2 \: \lambda^{2\alpha + n-1}\:d\lambda\right)}^{1/2} \nonumber\\
& \lesssim & N^{\alpha +\frac{n}{2}}\:.
\end{eqnarray}
Next, for $s\in[0,\varepsilon/N)$, using the spherical Fourier inversion we write
\begin{eqnarray*}
S_0g_N(s) = g_N(s) &=&  \int_0^{\infty} \what{g_N}(\lambda)\varphi_\lambda(s)\:{|{\bf c}(\lambda)|}^{-2} d\lambda \\
&=&c\left(\frac{s^{n-1}}{A(s)}\right)^{1/2}\int_0^{\infty}\what{g_N}(\lambda)\J_{\frac{n-2}{2}}(\lambda s)\:{|{\bf c}(\lambda)|}^{-2} d\lambda \\
&&\:\:\:\:+ \int_0^{\infty}\what{g_N}(\lambda)E_1(\lambda, s)\:{|{\bf c}(\lambda)|}^{-2} d\lambda\\
&=&g_{N,1}(s)+g_{N,2}(s)\:.
\end{eqnarray*}
Using the properties of the operators $\mathcal{M}$ and $\mathscr{A}$ (as in the proof of Lemma \ref{large_freq_lemma_ball_piece1}) it follows that
\begin{equation*}
|g_{N,1}(s)|\asymp |f_N(s)|\;,\:\:\:\:\:\text{for all, $s\in [0,\varepsilon/N)$,}
\end{equation*}
where the implicit constant is independent of $N$. Consequently, it follows from (\ref{fn}) that there exists a positive number $c$ (independent of $N$) such that
\begin{equation}\label{t1}
|g_{N,1}(s)|\geq cN^n\:,\:\:\:\:\:\text{for all, $s\in [0,\varepsilon/N)$.}
\end{equation}
We next show that the growth of $g_{N,2}$ is subordinated by $g_{N,1}$. As $s\in [0,\varepsilon/N)$ and $\varepsilon\in (0,1/2)$ it follows that
\begin{equation*}
\lambda Ns<1,\:\:\:\:\text{for all}\:\:\: \lambda\in (1,2)\:.
\end{equation*}
Hence using the estimate (\ref{ball_pf_eq2}) of the error term  $E_1$ we get that for $s\in [0,\varepsilon/N)$
\begin{eqnarray*}
|g_{N,2}(s)|&\le &\int_0^{\infty}|E_1(\lambda, s)|\:\psi\left(\frac{\lambda}{N}\right)\:{|{\bf c}(\lambda)|}^{-2} d\lambda\\
&=& N\int_1^2|E_1(\lambda N,s)|\:\psi(\lambda)\:{|{\bf c}(N\lambda)|}^{-2}d\lambda\\
&\lesssim &N\|\psi\|_{\infty}\int_1^2s^2(N\lambda )^{n-1}d\lambda\\
&<& c'\varepsilon^2N^{n-2}\:.
\end{eqnarray*}
Therefore, using triangle inequality and (\ref{t1}), it follows that for all $s\in [0,\varepsilon/N)$ and $N$ sufficiently large,
\begin{eqnarray*}
|S_0g_N(s)|&\geq & |g_{N,1}(s)|-|g_{N,2}(s)|\\
&\geq & cN^n-c'\varepsilon^2N^{n-2}\\
&\geq& c''N^n\:.
\end{eqnarray*}
This implies that for all $x \in B_{\varepsilon/N}$,
\begin{equation}  \label{sharpness_eq7}
S^*g_N(x) \ge c\:N^n\:,
\end{equation}
with the constant $c$ being independent of $N$. Now, if the estimate (\ref{maximal_estimate}) holds, then by (\ref{sharpness_eq6}) and (\ref{sharpness_eq7}), it follows that
\begin{equation*}
{\left(\int_{0}^{\varepsilon/N} N^{nq}\:s^{n-1}\:ds\right)}^{1/q} \lesssim N^{\alpha +\frac{n}{2}}\:,
\end{equation*}
which implies that
\begin{equation*}
N^{n\left(1-\frac{1}{q}\right)} \lesssim N^{\alpha +\frac{n}{2}}\:.
\end{equation*}
Then letting $N$ tending to infinity it follows that 
$\displaystyle{q \le 2n/(n-2\alpha)}$.

\medskip

{\bf Case 3}: {\em If $\alpha=n/2$ then (\ref{maximal_estimate}) does not hold for $q=\infty$.}

\medskip
\noindent
In this case we need to show the failure of the inequality
\begin{equation*}
{\|S^*f\|}_{L^\infty(B)} \le C_B \: {\|f\|}_{H^{n/2}(\mathcal S)}\:,
\end{equation*}
for every ball centered at the identity $e$. We assume that the above inequality is true for some ball $B_R$ of radius $R$ centered at $e$ with $R<R_0$, and then arrive at a contradiction. Now, if the above inequality is true then using the fact
\begin{equation*}
f(x)=\lim_{t\to 0}S_tf(x),\:\:\:\:\:\:x\in B_R,\:\;f\in \mathscr{S}^2(\mathcal S)_{o}\:,
\end{equation*}
it follows that the following inequality also holds true for all $f\in \mathscr{S}^2(\mathcal S)_{o}$,
\begin{equation} \label{sharpness_eq8}
{\|f\|}_{L^\infty(B_R)} \le C_{B_R} \: {\|f\|}_{H^{n/2}(\mathcal S)}.
\end{equation}
Hence, in particular, (\ref{sharpness_eq8}) holds for all $f\in \mathscr{S}^2(\mathcal S)_{o}$ with $\text{Supp}(\widehat{f})\subset (1,\infty)$. Let us choose and fix such a function $f$. By the spherical Fourier inversion and (\ref{ball_pf_eq1}) we have 
\begin{eqnarray} \label{sharpness_eq9}
f(s) &=& c_0 {\left(\frac{s^{n-1}}{A(s)}\right)}^{1/2} \int_{1}^\infty \J_{\frac{n-2}{2}}(\lambda s)\:\widehat{f}(\lambda)\: {|{\bf c}(\lambda)|}^{-2}\: d\lambda \nonumber\\
&& + \int_{1}^\infty E_1(\lambda, s)\:\widehat{f}(\lambda)\: {|{\bf c}(\lambda)|}^{-2}\: d\lambda \nonumber\\
&=& f_1(s) + f_2(s)\:.
\end{eqnarray}
From the proof of Lemma \ref{large_freq_lemma_ball_piece2}, one gets the estimate
\begin{equation} \label{sharpness_eq10}
{\|f_2\|}_{L^\infty(B_R)} \le C_{B_R} \: {\|f\|}_{H^{n/2}(\mathcal S)}\:.
\end{equation}
Thus by using (\ref{sharpness_eq8})-(\ref{sharpness_eq10}), and the triangle inequality we have that
\begin{equation} \label{sharpness_eq11}
{\|f_1\|}_{L^\infty(B_R)} \le C_{B_R} \: {\|f\|}_{H^{n/2}(\mathcal S)}\:.
\end{equation}
We again define $g:=(\mathcal{M} \circ \mathscr{A})f$. It is clear that $\mathcal{M} \circ \mathscr{A}$ defines a bijection from the subspace of functions in $\mathscr{S}^2(\mathcal S)_{o}$ whose spherical Fourier transforms are supported in $(1,\infty)$ onto the subspace of functions in $\mathscr{S}(\R^n)_{o}$ whose Euclidean spherical Fourier transforms are supported in $(1,\infty)$.  Then proceeding as in the proof of Lemma \ref{large_freq_lemma_ball_piece1}, we get that
\begin{equation*}
{\|f_1\|}_{L^\infty(B_R)} \asymp C_{B_R}{\|g\|}_{L^\infty(B(o,R))}\:,\:\:\:\:\:\:\:{\|f\|}_{H^{n/2}(\mathcal S)} \asymp {\|g\|}_{H^{n/2}(\R^n)}\:.
\end{equation*}
Then combining the above with (\ref{sharpness_eq11}), we get that
\begin{equation*}
{\|g\|}_{L^\infty(B(o,R))} \lesssim {\|f_1\|}_{L^\infty(B_R)} \lesssim {\|f\|}_{H^{n/2}(\mathcal S)} \lesssim {\|g\|}_{H^{n/2}(\R^n)}\:,
\end{equation*}
with the resultant implicit constant depending only on the ball $B(o,R)$. As $f$ was arbitrarily chosen, it follows that the above inequality is true for all $g\in\mathscr{S}(\R^n)_{o}$, such that $\mathscr{F}g$ are supported in $(1,\infty)$. Now by an argument similar to the proof of Lemma \ref{small_freq_lemma}, it follows that for all $g$ in $\mathscr{S}(\R^n)_{o}$ such that $\mathscr{F}g$ are supported in $[0,2)$, we also have
\begin{equation*}
{\|g\|}_{L^\infty(B(o,R))} \lesssim {\|g\|}_{H^{n/2}(\R^n)}\:.
\end{equation*}
Then an argument as in the proof of the sufficient conditions of Theorem \ref{theorem} (subsection $4.3$) involving smooth resolution of identity would give,
\begin{equation*}
{\|g\|}_{L^\infty(B(o,R))} \lesssim {\|g\|}_{H^{n/2}(\R^n)}\:,
\end{equation*}
for all $g$ in $\mathscr{S}(\R^n)_{o}$.  But this contradicts the failure of the local borderline fractional Sobolev
embedding on $\R^n$ (see \cite[p. 9, Exercise 6.1.4]{Grafakos} for an explicit counter-example). This completes the proof.
\end{proof}
\begin{remark} \label{final_remark}
In the special case of rank one Riemannian symmetric spaces of noncompact type $G/K$, one has the failure of the borderline fractional Sobolev embedding:
\begin{equation} \label{final_remark_eq1}
{\|f\|}_{L^\infty(G/K)} \lesssim \: {\|f\|}_{H^{n/2}(G/K)}\:,
\end{equation}
(see \cite[Theorem 4.7]{CGM}). Our proof above indicates the failure of the more delicate local borderline fractional Sobolev embedding:
\begin{equation*}
{\|f\|}_{L^\infty(B)} \lesssim \: {\|f\|}_{H^{n/2}(G/K)}\:,
\end{equation*}
for any ball $B$ centered at the identity coset $eK$ and thus providing a refinement of (\ref{final_remark_eq1}).
\end{remark}

\section{Global estimates}
In this section, we prove Theorems \ref{SL(2,C)_global} and \ref{weakL2_global}. We start with some relevant preliminaries for $\mathbb{H}^3$ (for details, see \cite{Helgason}). Let $G=SL(2,\mathbb{C})$ and $K$ be its maximal compact subgroup $SU(2)$. Here,
\begin{equation*}
A = \left\{a_s=
\begin{pmatrix}
e^s &0 \\
0 &e^{-s}
\end{pmatrix} : s \in \R
\right\}\:,
\end{equation*}
and $\Sigma_+$ consists of a single root $\alpha$ that occurs with multiplicity $2$.  We normalize $\alpha$ so that $\alpha(log\: a_t)=t$. Every $\lambda \in \mathbb{C}$ can be identified with an element in $\mathfrak{a}^*_{\mathbb{C}}$ by $\lambda = \lambda \alpha$. We see that in this identification the half-sum of positive roots counted with multiplicity, denoted by $\rho$ equals $1$ and then $Q=2\rho=2$ and $\displaystyle{G/K \cong \mathbb{H}^3}$.
From \cite[p. 432, Theorem 5.7]{Helgason} we have the following information for $G/K$:
\begin{enumerate}
\item For $\lambda \in \R^+$, the spherical function $\varphi_\lambda$ is given by,
\begin{equation*}
\varphi_\lambda(s) = \frac{\sin \lambda s}{\lambda \sinh s},\:\:\:\:\:\:s\in \R^+.
\end{equation*}
\item The $G$-invariant Riemannian measure on $G/K$ is $\displaystyle{\sinh^2 s \: ds \: dk}$, where $ds$ and $dk$ are the Lebesgue measure on $\R^+$ and the Haar measure on $SU(2)$ respectively.
\item The Plancherel measure in this case is given by $\displaystyle{|{\bf c}(\lambda)|^{-2}\: d\lambda=\lambda^2 \: d\lambda}$, where $d\lambda$ is the Lebesgue measure on $\R^+$.
\end{enumerate}

\begin{proof}[Proof of Theorem \ref{SL(2,C)_global}]
We first prove the positive result for $q=2$. For $f \in \mathscr{S}^2(\mathbb{H}^3)_o$, we consider the linearized maximal function $Tf$ (given by (\ref{linearization})) and its Euclidean counterpart $T_0(\mathscr{A}f)$ (given by (\ref{Rn_linearization})) for  $\mathscr{A}f \in \mathscr{S}(\mathbb{R}^3)_o$. For $s\in \R^+$, we have
\begin{eqnarray} \label{linear_comparison}
T f(s) &=& \int_0^\infty \varphi_\lambda (s) \: e^{it(s)(\lambda^2+1)} \: \hat{f}(\lambda) \: \lambda^2 \: d\lambda \nonumber\\
&=& \int_0^\infty \frac{\sin(\lambda s)}{\lambda \sinh(s)} \: e^{it(s)(\lambda^2+1)} \: \mathscr{F}(\mathscr{A}f)(\lambda) \: \lambda^2 \: d\lambda \nonumber\\
&=& \left(\frac{e^{it(s)} s}{\sinh(s)}\right) \int_0^\infty \frac{\sin(\lambda s)}{\lambda s} \: e^{it(s)\lambda^2} \: \mathscr{F}(\mathscr{A}f)(\lambda) \: \lambda^2 \: d\lambda \nonumber\\
&=& \left(\frac{e^{it(s)} s}{\sinh(s)}\right) \int_0^\infty \left(\sqrt{2}\: \Gamma\left(3/2\right) \frac{J_{1/2}(\lambda s)}{{(\lambda s)}^{1/2}} \right)\: e^{it(s)\lambda^2} \: \mathscr{F}(\mathscr{A}f)(\lambda) \: \frac{\lambda^2 \: d\lambda}{\sqrt{2}\: \Gamma(3/2)} \nonumber\\
&=& \left(\frac{e^{it(s)} s}{\sinh(s)}\right) T_0 (\mathscr{A}f)(s)\:,
\end{eqnarray}
and thus
\begin{eqnarray} \label{SL2_pf_eq1}
\|Tf\|_{L^2(\mathbb{H}^3)} &=& {\left(\int_0^\infty |Tf(s)|^2\:\sinh^2s\:ds\right)}^{1/2}\nonumber\\ &=&  {\left(\int_0^\infty |T_0(\mathscr{A}f)(s)|^2\:s^2\:ds\right)}^{1/2} \nonumber\\
&\asymp& \|T_0(\mathscr{A}f)\|_{L^2(\mathbb{R}^3)}\:.
\end{eqnarray}
Now the computations as in (\ref{ball_pf_eq6}) yield,
\begin{equation} \label{SL2_pf_eq2}
\|f\|_{H^\alpha(\mathbb{H}^3)} \asymp \|\mathscr{A}f\|_{H^\alpha(\R^3)}\:,\:\: \alpha \ge 0\:.
\end{equation}
Then combining (\ref{SL2_pf_eq1}), (\ref{SL2_pf_eq2}) and Sj\"olin's Euclidean result (\ref{Rn_global_estimate}), we get the positive result on $\mathbb{H}^3$ for $q=2$.

To get the failure for $q<2$, using (\ref{linear_comparison}), we note
\begin{eqnarray} \label{SL2_pf_eq3}
\|Tf\|_{L^q(\mathbb{H}^3)} &=& {\left(\int_0^\infty |Tf(s)|^q\:\sinh^2s\:ds\right)}^{1/q} \nonumber\\
&=&  {\left(\int_0^\infty |T_0(\mathscr{A}f)(s)|^q\:s^2\:\left(\frac{\sinh s}{s}\right)^{2-q}\:ds\right)}^{1/q} \nonumber\\
& \ge & {\left(\int_0^\infty |T_0(\mathscr{A}f)(s)|^q\:s^2\:ds\right)}^{1/q} \nonumber\\
&\gtrsim & \|T_0(\mathscr{A}f)\|_{L^q(\mathbb{R}^3)}\:.
\end{eqnarray}
Thus in view of (\ref{SL2_pf_eq2}) and (\ref{SL2_pf_eq3}), if
\begin{equation*}
\|Tf\|_{L^q(\mathbb{H}^3)} \lesssim \|f\|_{H^\alpha(\mathbb{H}^3)}\:,
\end{equation*}
is valid for $\alpha>1/2$ and $q<2$, then it will also be true for $\R^3$ which contradicts \cite[Theorem 4]{Sjolin2}\:. This completes the proof of Theorem \ref{SL(2,C)_global}\:.
\end{proof}

\begin{proof}[Proof of Theorem \ref{weakL2_global}]
We decompose $\mathcal S$ as
\begin{equation*}
\mathcal S = \overline{B}_{R_0} \cup \left(\mathcal S \setminus \overline{B}_{R_0}\right)=: B \cup A\:,
\end{equation*}
where $R_0$ is as in Lemma \ref{bessel_series_expansion} and then study the boundedness properties of the linearized maximal function $Tf$ (given by (\ref{linearization})) on $B$ and $A$ separately. On $B$, our local estimates given by Lemma \ref{small_freq_lemma} and  Lemma \ref{large_freq_lemma_ball}, $(i)$, along with the resolution of identity argument employed above yields,
\begin{equation} \label{weakL2_global_eq1}
{\left\|Tf\right\|}_{L^2\left(B\right)} \lesssim \: {\|f\|}_{H^{1/2}(\mathcal S)}\:.
\end{equation}
On $A$, for $\lambda>0$, using the estimate (\ref{pointwise_phi_lambda}) of $\varphi_{\lambda}$ and the Cauchy-Schwarz inequality, we get for
$s\in (R_0,\infty)$,
\begin{eqnarray*}
|Tf(s)| & \lesssim & e^{-\rho s} \int_{0}^\infty |{\bf c}(\lambda)|\: \left|\widehat{f}(\lambda)\right|\: {|{\bf c}(\lambda)|}^{-2}\: d\lambda \\
& \le &  e^{-\rho s} {\left(\int_{0}^\infty {\left(\lambda^2 + \rho^2\right)}^\alpha {|\widehat{f}(\lambda)|}^2 {|{\bf c}(\lambda)|}^{-2} d\lambda\right)}^{1/2} {\left(\int_{0}^{\infty}\frac{{d\lambda}}{{\left(\lambda^2 + \rho^2\right)}^\alpha} \right)}^{1/2} \\
& = & e^{-\rho s}\:{\|f\|}_{H^\alpha(\mathcal S)}\: {\left(\int_{0}^{\infty}\frac{{d\lambda}}{{\left(\lambda^2 + \rho^2\right)}^\alpha} \right)}^{1/2}\:.
\end{eqnarray*}
Thus for $\alpha > 1/2$, we get for
$s\in (R_0,\infty)$,
\begin{equation}\label{weakL2_global_eq2}
|Tf(s)| \lesssim e^{-\rho s}\:{\|f\|}_{H^\alpha(\mathcal S)}\:.
\end{equation}
Hence in view of (\ref{weakL2_global_eq1}) and (\ref{weakL2_global_eq2}), to complete the proof of Theorem \ref{weakL2_global}, it suffices to prove that the function $F(s):= e^{-\rho s}\:\chi_A(s)$ belongs to $L^{2,\infty}(A)$\:. To see this, we note that
\begin{equation*}
d_F(t) \begin{cases}
        =0\:,\:\:\:\: t\ge 1\:, \\
        \lesssim \frac{1}{t^2}\:,\:\: 0<t< 1\:,
        \end{cases}
\end{equation*}
and hence,
\begin{equation*}
\|F\|_{L^{2,\infty}(A)}= \displaystyle\sup_{t>0} t\:d_F(t)^{1/2} \lesssim 1\:.
\end{equation*}
This completes the proof of Theorem \ref{weakL2_global}.
\end{proof}
\begin{remark}\label{assgn}
The main difference between the proofs of Theorem \ref{SL(2,C)_global} and Theorem \ref{weakL2_global} is that the former uses the oscillation afforded by the Schr\"odinger multiplier whereas the latter does not. Hence, it gives rise to the intriguing question whether Theorem \ref{weakL2_global} can be improved by making use of this oscillation.
\end{remark}

\section*{Acknowledgements} The first author is supported by a Research Fellowship of Indian Statistical Institute.

\bibliographystyle{amsplain}

\end{document}